\newcommand\lie[1]{\mathfrak{#1}}
\newcommand\qbin[2]{\left[  \begin{array}{c}  #1 \\ #2  \end{array}  \right]}
\newcommand{\CC}{\mathbb{C}}
\newcommand{\Z}{\mathbb{Z}}
\newcommand{\Q}{\mathbb{Q}}
\newcommand\on[1]{\rm{#1}}
\newcommand{\A}{\mathcal A}
\newcommand{\Sb}{\mathbb{S}}
\newcommand{\cL}{\mathcal{L}}
\newtheorem{theorem}{Theorem}[section]
\newtheorem{corollary}[theorem]{Corollary}
\newtheorem{proposition}[theorem]{Proposition} 
\newtheorem{lemma}[theorem]{Lemma} 
\newtheorem{definition}[theorem]{Definition}
\title[Tensor powers]{Tensor powers of vector representation of $U_q(\lie{sl}_2)$ at even roots of unity.}
\author{Anna Lachowska}
\address{A.L.: EPFL CH-1015 Lausanne,
Switzerland}
\email{anna.lachowska@epfl.ch}
\author{Olga Postnova}
\address{O.P.: Euler International Mathematical Institute, Laboratory of Mathematical Problems of Physics, St. Petersburg Department of Steklov Institute of Mathematics, Saint Petersburg, Fontanka river emb. 27,
191023 Saint Petersburg,
Russia}
\email{postnova.olga@gmail.com}
\author{Nicolai Reshetikhin}
\address{N.R.: Yau Mathematical Sciences Center, Tsinghua University, Beijing, China; BIMSA, Beijing, China; Department of Mathematics, UC Berkeley, Berkeley, CA, USA; Saint Petersburg State University, Saint Petersburg, Russia}
\email{reshetik@math.berkeley.edu}
\author{Dmitry Solovyev}
\address{D.S.: Yau Mathematical Sciences Center, Tsinghua University, Beijing, China}
\email{dimsol42@gmail.com}
\begin{document}

\dedicatory{Dedicated to the memory of A. M. Vershik.}

\maketitle

\begin{abstract}
We study the decomposition of tensor powers of two dimensional irreducible representations of quantum $\mathfrak{sl}_2$ at even roots of unity into direct sums of tilting modules. We derive a combinatorial formula for multiplicity of tilting modules in the $N$-th tensor power of two dimensional irreducible representations, interpret it in terms of lattice paths and find its asymptotic behavior when $N\to\infty$. We also describe the limit of character and Plancherel measures when $N\to\infty$. We consider both $U_q(\mathfrak{sl}_2)$ with divided powers and the small quantum $sl_2$.
\end{abstract}

\tableofcontents

\section{Introduction} 
\subsection{} A typical problem in asymptotical representation theory, the subject pioneered by A. M. Vershik, is the study of infinite sequences of branching rules. For example, if $A_n$, $n\in\mathbb{Z}_{>0}$ is a sequence of associative algebras, $V_n$ is a sequence of finite dimensional\footnote{Finite dimensionality is not necessary. For example in \cite{F} similar problem was analyzed for tensor powers of the fundamental representation of $\hat{sl_2}$ where all representations are infinite dimensional.} representations of $A_n$ and $B_n\subset A_n$ is a sequence of subalgebras, consider restrictions of $V_n$ to $B_n$. We have:
$$
V_n\lvert_{B_n}\cong\bigoplus_j \left(V^{(j)}_{B_n}\right)^{\oplus m_j(n)}
$$
where $V^{(j)}_{B_n}$ are indecomposable representations of $B_n$. Natural important problem related to such a sequence is to find the asymptotic of $m_j(n)$ as $n\to\infty$ and to find the asymptotic of the Plancherel measure
$$
p_j(n)=\frac{m_j(n)\dim V^{(j)}_n}{\dim V_n}.
$$
For earlier references on the subject see \cite{VK}, \cite{LS}, \cite{K}, \cite{B}, \cite{TZ}.

An important example of such type of problems is when $H$ is a Hopf algebra, $A_n=H^{\otimes n}$, $B_n=H$, $\Delta^{(n)}:H\to H^{\otimes n}$ and $V_n=V^{\otimes n}$ where $V$ is a representation of $H$. When $H=U(\mathfrak{g})$ such problem was studied in \cite{K} for $\mathfrak{g}=\mathfrak{sl}_n$ and $V=\mathbb{C}^n$. It was extended to all simple $\mathfrak{g}$ with any irreducible $V$ in \cite{B} \cite{TZ}. The asymptotic of character measures, which is a deformation family of Plancherel measures was studied in \cite{PR1} \cite{PR2}. In \cite{CEO} a related problem of finding the asymptotical behavior of $b_n=\sum_j m^{(j)}_n$ in the context of tensor categories was addressed.

\subsection{} In this paper we extend the study of statistics of irreducible components in tensor products of finite dimensional representations of simple Lie algebras \cite{K,B,TZ,PR1} to a similar question about tensor powers of $L(1)$, the two dimensional irreducible representation of $U_q(\mathfrak{sl}_2)$ of type ${\bf 1}$, when $q$ is a root of unity.\footnote{Throughout the paper we always imply a complex root of unity.} In this case representation theory is not semisimple and we study the statistics of indecomposable submodules in tensor powers of $L(1)$. We consider two versions of quantum $\mathfrak{sl}_2$ at a root of unity, $U_q(\mathfrak{sl}_2)$ which is the quantized universal enveloping of the $\mathfrak{sl}_2$ with divided powers \cite{Lus89} and its finite dimensional Hopf subalgebra $u_q(\mathfrak{sl}_2)$.

The two dimensional irreducible representation of $U_q(\mathfrak{sl}_2)$ is a simplest indecomposable tilting module. This is why we also use notation $T(1)$ for $L(1)$. Because the category of tilting modules is a tensor subcategory of the category of finite dimensional representations of $U_q(\mathfrak{sl}_2)$, tensor powers of $T(1)$ decompose into direct sums of tilting modules. On indecomposable tilting components of $T(1)^{\otimes N}$ there is a natural probability measure, the character measure. It is a one parametric family of probability measures with the Plancherel measure being a particular case.

The multiplicities of tilting modules over $U_q(\mathfrak{sl}_2)$ in tensor powers of $L(1)$ also give the dimensions of simple modules for the corresponding Temperley-Lieb algebras, which can be considered as endomorphism algebras of tensor powers of $L(1)$ over $U_q(\mathfrak{sl}_2)$, see e.g. \cite{AST}, \cite{ILZ}. We provide a detailed derivation of the multiplicity formulas for the case of $U_q(\mathfrak{sl}_2)$ and the small quantum group $u_q(\mathfrak{sl}_2)$. In the case of the big quantum group $U_q(\mathfrak{sl}_2)$ our result agrees with the formulas obtained earlier in \cite{And17}, where the case of $U(\mathfrak{sl}_2)$ over a field of positive characteristics and the mixed case are also considered.

For $q=e^{i\frac{\pi m}{l}}$ where $l$ is odd and $m$ and $l$ are mutually prime, indecomposable tilting modules of $U_q(\mathfrak{sl}_2)$ of type ${\bf 1}$ are parametrized by integers. For such tilting modules we write $T(k)$ with $k=lk_1+k_0$, $m_1\in\mathbb{Z}_{\geq 0}$, $0\leq k_0\leq l-2$. For details see Section \ref{qsl2}. The multiplicities of $T(k)$ in the decomposition of the tensor power $T(1)^{\otimes N}$
$$
T(1)^{\otimes N}= \bigoplus_{n\in\mathbb{Z}_{\geq 0}}T(n)^{\oplus M^{(l)}_n(N)}
$$
are given in Theorem \ref{comb_coef}, see also \cite{And17}, {\it Corollary 3.5}. They are also computed in \cite{Sol} in terms of lattice paths, and as an integral of characters and their duals in Section \ref{integralchar}. The character of $T(k)$ is determined by characters of the Weyl modules(see Section \ref{qsl2}) and can be explicitly computed (see Lemma \ref{fchar})
\begin{equation*}
	\mathrm{ch}_{T(k_1l+k_0)}(x)=
	\begin{cases}
		\frac{x^{k_0+1}-x^{-(k_0+1)}}{x-x^{-1}},& \textit{if $0\leq k_0\leq l-1$ and $k_1=0$},\\
		\frac{x^{(k_1+1)l}-x^{-(k_1+1)l}}{x-x^{-1}},& \textit{if $k_0=l-1$ and $k_1\in\mathbb{Z}_{>0}$},\\
		(x^{k_0+1}+x^{-(k_0+1)})\frac{x^{k_1l}-x^{-k_1l}}{x-x^{-1}},& \textit{if $0\leq k_0\leq l-2$ and $k_1\in\mathbb{Z}_{>0}$}.
	\end{cases}
\end{equation*}
When $x>0$ all these characters are positive and we have the character probability measure on indecomposable tilting components with the probability of module $T(n)$ occurring in $T(1)^{\otimes N}$ being
\begin{equation}\label{cchh}
	p^{(l)}_N(k;x)=\frac{M^{(l)}_{k}(N)\mathrm{ch}_{T(k)}(x)}{(\mathrm{ch}_{T(1)}(x))^N}.
\end{equation}
In Theorem \ref{theomch} we prove the weak convergence of (\ref{cchh}) to the tensor product $p_1(\bullet;t)\otimes p_2(\bullet;t)$, where $t=\mathrm{ln}x$, $p_2$ is the Gaussian measure on $\mathbb{R}$ with the density
\begin{equation*}
	p_2(\alpha;t)=\sqrt{\frac{l}{2\pi}}\mathrm{cosh}(t)e^{-\frac{\alpha^2l}{2}\left(\mathrm{cosh}(t)\right)^2},\quad\alpha\in\mathbb{R}
\end{equation*}
and $p_1$ is the discrete probability measure on $k_0=0,1,\ldots,l-1$ with
\begin{equation*}
p_1(k_0;t)=
\begin{cases}
	\frac{(e^{t(k_0+1)}+e^{-t(k_0+1)})(e^{t(l-1-k_0)}-e^{-t(l-1-k_0)})}{l(e^{tl}-e^{-tl})},& \textit{if $k_0=0,\ldots,l-2$},\\
	\frac{1}{l},& \textit{if $k_0=l-1$}.
\end{cases}
\end{equation*}
Note that this tensor product of distributions is reminiscent to the tensor product formula for tilting modules from Theorem \ref{tilt_prod}. We use it indirectly in the proof of Theorem \ref{fchar}. We expect a more direct use of Theorem \ref{tilt_prod} in the derivation of the weak limit of the character measure based on Markov processes.

We also compute the intermediate scaling of the character measure(near Plancherel behavior of the character measure), when $N\to\infty$, $t\to0$ such that $t\sqrt{N}$ is finite. And we compute the limit when $N,t\to\infty$ such that $\theta=N e^{-2t}$ is finite, which corresponds to the critical drift. In the latter case the limit probability measure is geometric in the variable $s=\frac{N-k}{2}$.

The character and, as a consequence, the Plancherel measure are special: they are given by running certain Markov process $N$ times. This Markov process is determined by the decomposition of $T(1)\otimes T(k)$. This gives another way to compute the limit of the character measure. We do this for the Plancherel measures for the small quantum $\mathfrak{sl}_2$, for $u_q(\mathfrak{sl}_2)$ in Section \ref{markovuq}. Markov process derivation of the limit of the character measure will be done in a separate publication.

In this paper we focused on $U_q(\mathfrak{sl}_2)$ and $u_q(\mathfrak{sl}_2)$. The extension of these results to other simple Lie algebras and superalgebras involves regular values of coefficients of Kazhdan-Lusztig polynomials and will be done in a separate publication. Following the classical framework via a suitable version of Schur-Weyl duality this study should have implications for the representation theory of Hecke and Brauer algebras, as outlined in e.g. \cite{AST}.

\subsection{} The paper is organized as follows. In Section \ref{one} we review basic facts about quantized universal enveloping algebra $U_q(\mathfrak{sl}_2)$ at even roots of unity. In Section \ref{tp} we derive formula for multiplicities of tilting modules in decomposition of $T(1)^{\otimes N}$ over $U_q(\mathfrak{sl}_2)$. In Section \ref{restUu} we do this for the small quantum group $u_q(\mathfrak{sl}_2)$. In Section \ref{recursion} we give recursive description of the multiplicities. In Section \ref{rec-mult} we describe multiplicity functions in terms of path counting formulae for lattice path models, defined by the given recursive description. In Section \ref{rec-int} we obtain formulae for multiplicities over $U_q(\mathfrak{sl}_2)$ by means of an integral of characters and their duals.

In the remaining sections we study the asymptotic of multiplicities and asymptotic of distributions of tilting modules in $T(1)^{\otimes N}$ in the $N\to\infty$ limit. In Section \ref{probmeas} we define probabilistic measures on indecomposable components in tensor product decomposition of $T(1)^{\otimes N}$ over $U_q(\mathfrak{sl}_2)$ and over $u_q(\mathfrak{sl}_2)$. For the case $U_q(\mathfrak{sl}_2)$ we consider the character measure and the Plancherel measure, for the case $u_q(\mathfrak{sl}_2)$ we consider the Plancherel and the quantum Plancherel measures. In Section \ref{as9} we derive asymptotic of character and Plancherel measures for $U_q(\mathfrak{sl}_2)$. We consider asymptotics in the bulk, the critical drift and the intermediate scaling. In Section \ref{as10} we use Markov processes associated with tensor products to derive asymptotic of Plancherel and quantum Plancherel measures for $u_q(\mathfrak{sl}_2)$. In Appendices \ref{prthu}, \ref{prthu2}, \ref{proofintermsc}, \ref{proofdrift} we give proofs of the main statements.

\subsection{Acknowledgements} The authors are grateful to V. Serganova, P. Shan and H. H. Andersen for inspiring discussions. At earlier stages of this work we enjoyed discussions with A. M. Vershik. This work was supported by Leonhard Euler International Mathematical Institute (agreement no.075-15-2022-289 date 06/04/2022). The work of O.P., N.R. and D.S. was supported by the the RSF grant 21-11-00141. The work of N.R. was also supported by the Collaboration Grant "Categorical Symmetries" from the Simons Foundation, by the grant, by the grant BMSTC and ACZSP (Grant no. Z221100002722017) and by the Changjiang fund. The work of D.S. was supported by the Xing Hua Scholarship program of Tsinghua University.

\section{Algebra $U_q(\lie{sl}_2)$ and its representations}\label{one}
\subsection{Hopf algebra $U_q(\lie{sl}_2)$}\label{qsl2}

Let $v$ be a formal parameter and consider the $\Q(v)$-algebra $U_v(\lie{sl}_2)$ generated by $E, F, K^{\pm1}$ with relations 
\[ K K^{-1} = K^{-1}K =1 , \quad \quad K E = v^2 E K , \quad \quad K F = v^{-2} F K , \] 
\[ EF - F E  = \frac{K - K^{-1}}{v - v^{-1}} .\] 
This is a Hopf algebra with the coproduct, counit and the antipode given by 
\[ \Delta E = E \otimes 1 + K \otimes E , \quad \quad \Delta F = F \otimes K^{-1} + 1 \otimes F, \quad \quad \Delta K^{\pm 1}  = K^{\pm 1} \otimes K^{\pm 1} ,\] 
\[ \varepsilon(E) = \varepsilon(F) = 0, \quad \quad \varepsilon(K^{\pm 1}) = 1  .\]   
\[ S(E) = -K^{-1} E, \quad \quad S(F) = -FK, \quad \quad S(K) = K^{-1} . \]

Set $\A = \Z[v, v^{-1}]$ and denote by $U_\A(\lie{sl}_2)$ the $\A$-subalgebra of $U_v(\lie{sl}_2)$ with $1$  generated by 
\[ E^{(n)} = \frac{E^n}{[n]!},   \quad F^{(n)} = \frac{F^n}{[n]!} \] 
for all $n \in \mathbb{Z}_{\geq 0}$. 
For any $t \geq 0$, $c \in \Z$  let 
\[ \qbin{K;\; c}{t} = \prod_{s=1}^t \frac{K v^{c-s+1} - K^{-1} v^{-c+s-1}}{v^s - v^{-s}}  \in U_\A(\lie{sl}_2) .\]

We have in $U_\A(\lie{sl}_2)$ \cite{Lus89}: 
\[ E^{(n)} E^{(m)} = \qbin{n+m}{m} E^{(n+m)} , \quad \quad F^{(n)} F^{(m)} = \qbin{n+m}{m} F^{(n+m)} .\] 
\[ E^{(p)} F^{(r)} = \sum_{0 \leq t \leq p,r} F^{(r-t)} \qbin{K ; \; 2t-p-r}{t} E^{(p-t)} ,  \quad  p, r \geq 0. \] 

Let $q = e^{\frac{\pi im}{l}}$ be a primitive $2l$-th root of unity, where we assume $l$ to be odd and $m$ and $l$ to be mutually prime. We have $q^l =-1$, $q^{2l} =1$.  Let $U_q(\lie{sl}_2) = U_\A(\lie{sl}_2) \otimes_{\A} \Q[q,q^{-1}]$, where $\A$ acts by $v \rightarrow q$. This is the big quantum group with divided powers specialized at $q$. 
\begin{proposition}  \label{E^lp}
\begin{enumerate} 
\item  Let $p \in \mathbb{Z}_{\geq 0}$. Then we have in $U_q(\lie{sl}_2)$: 
\[ E^{(lp)} = (-1)^{p(p-1)/2} \; \frac{(E^{(l)})^p}{p!}  .\] 
\item Let $m = m_1 l + m_0$, where $0 \leq m_0 \leq l-1$, and $m_1 \in \mathbb{Z}_{\geq 0}$, then 
\[ E^{(lm_1 + m_0)} = (-1)^{m_1(m_1-1)/2+ m_1m_0} \frac{E^{\,m_0}}{[m_0]!} \; \frac{(E^{(l)})^{m_1}}{m_1!}  .\] 
A similar relation holds for $F^{(m_1 l + m_0)}$. 
\item The algebra $U_q(\lie{sl}_2)$ is generated over $\Q(q)$ by $E, E^{(l)}, F, F^{(l)}, K^{\pm 1}$. 
\end{enumerate} 
\end{proposition}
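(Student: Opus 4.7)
The plan is to reduce all three claims to the evaluation of two specific quantum binomial coefficients at $v = q$, using throughout the multiplication rule $E^{(n)}E^{(m)} = \qbin{n+m}{m}\,E^{(n+m)}$ recalled in $U_\A(\lie{sl}_2)$. The main obstacle throughout is that in the naive factorial formula both numerator and denominator contain vanishing factors coming from $[jl]_q = 0$, so one cannot simplify termwise at $v=q$. The cleanest way around this is the change of variable $\qbin{n}{k} = v^{-k(n-k)}\,\binom{n}{k}_{v^2}$ relating the symmetric and the ordinary $q$-binomials, so that after specializing $v = q$ the base becomes $q^2$, a primitive $l$-th root of unity (since $\gcd(m,l)=1$), to which the classical Lucas theorem for ordinary $q$-binomials applies.

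For part (a), iterating the multiplication rule with $n = l$ gives
\[
(E^{(l)})^p \;=\; \left(\prod_{k=2}^{p} \qbin{kl}{l}\right) E^{(lp)} \quad \text{in } U_q(\lie{sl}_2),
\]
so the statement reduces to evaluating $\qbin{kl}{l}$ at $v = q$. Lucas gives $\binom{kl}{l}_{q^2} = \binom{k}{1}\binom{0}{0}_{q^2} = k$, while the prefactor $q^{-l(kl-l)} = (q^{l^2})^{-(k-1)}$ equals $(-1)^{k-1}$ since $l$ is odd and $q^l = -1$. Hence $\qbin{kl}{l}$ evaluates to $(-1)^{k-1} k$, and the product telescopes to $p!\,(-1)^{p(p-1)/2}$; dividing out gives (a).

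For (b), apply the multiplication rule to $E^{(m_0)} E^{(lm_1)}$: the same Lucas/prefactor computation gives $\binom{lm_1+m_0}{m_0}_{q^2} = \binom{m_1}{0}\binom{m_0}{m_0}_{q^2} = 1$ (using $0 \le m_0 \le l-1$) with prefactor $q^{-m_0\,(lm_1)} = (-1)^{m_0 m_1}$, so $\qbin{lm_1+m_0}{m_0}$ equals $(-1)^{m_0 m_1}$ at $v = q$. Combining with (a) rewritten as $(E^{(l)})^{m_1} = (-1)^{m_1(m_1-1)/2}\,m_1!\,E^{(lm_1)}$ and with $E^{m_0} = [m_0]!\,E^{(m_0)}$ (noting that $E$ commutes with $E^{(l)}$) yields (b) after collecting signs; the argument for $F^{(lm_1+m_0)}$ is identical.

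For (c), since $q$ is a primitive $2l$-th root with $l$ odd, $[j]_q \ne 0$ for $1 \le j \le l-1$, so $[m_0]!$ and $m_1!$ are invertible in $\Q(q)$. Hence (b) can be inverted to express every divided power $E^{(n)}$ as a polynomial in $E$ and $E^{(l)}$ over $\Q(q)$, and likewise for $F^{(n)}$. Since $U_\A(\lie{sl}_2)$ is generated by the divided powers together with $K^{\pm 1}$, the specialization $U_q(\lie{sl}_2) = U_\A(\lie{sl}_2) \otimes_\A \Q[q,q^{-1}]$ is generated by $E, E^{(l)}, F, F^{(l)}, K^{\pm 1}$, as claimed.
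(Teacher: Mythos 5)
Your proof is correct and follows essentially the same route as the paper's: both reduce the three claims to evaluating the balanced $q$-binomials $\qbin{kl}{l}$ and $\qbin{lm_1+m_0}{m_0}$ at $v=q$ via a $q$-Lucas theorem, then apply the divided-power multiplication rule. The only presentational difference is that the paper cites Lusztig's Lucas formula for balanced $q$-binomials directly (its Proposition~\ref{q-bin1}), whereas you derive it from the classical $q$-Lucas theorem for ordinary Gaussian binomials through the identity $\qbin{n}{k}=v^{-k(n-k)}\binom{n}{k}_{v^2}$, tracking the sign $q^{-k(n-k)}$ explicitly.
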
 
\begin{proof} 
Part (a) follows from Corollary \ref{q-bin2}, and (b) from Proposition \ref{q-bin1}, then (c) is a direct consequence of (b). 
\end{proof}

\begin{lemma} 
In $U_q(\lie{sl}_2)$ we have $K^{2l}=1$ and $K^l$ is central. 
\end{lemma}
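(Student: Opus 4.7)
The plan is to handle the two claims separately. Centrality of $K^l$ is immediate from the $q$-commutation relations: iterating $KE = v^2 EK$ yields $K^l E = v^{2l} E K^l = q^{2l} E K^l = E K^l$, and similarly $K^l F = F K^l$, while $K^l$ trivially commutes with $K^{\pm 1}$. The same argument shows $K$ already commutes with $E^{(l)}$ and $F^{(l)}$ (since $KE^{(l)} = v^{2l} E^{(l)} K = E^{(l)} K$, and similarly for $F^{(l)}$), so $K^l$ commutes with them as well. Proposition~\ref{E^lp}(c) tells us $E,F,E^{(l)},F^{(l)},K^{\pm 1}$ generate $U_q(\lie{sl}_2)$ over $\Q(q)$, so $K^l$ is central.

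For the relation $K^{2l}=1$ I would exploit the element $\qbin{K;\; 0}{l}\in U_\A(\lie{sl}_2)$, whose defining product formula contains scalar denominators that collapse after specialization. Unwinding the product,
\[
\qbin{K;\; 0}{l}\cdot\prod_{s=1}^{l}(v^s - v^{-s}) \;=\; \prod_{s=1}^{l}\bigl(Kv^{-s+1}-K^{-1}v^{s-1}\bigr)
\]
holds in $U_\A(\lie{sl}_2)$. Factoring $K^{-1}v^{s-1}$ from each term on the right rewrites the right-hand side as $K^{-l}\,v^{l(l-1)/2}\prod_{r=0}^{l-1}(K^2 v^{-2r}-1)$.

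Now I would specialize at $v=q$. The left-hand side vanishes because the scalar $\prod_{s=1}^{l}(q^s - q^{-s})$ contains the factor $q^l-q^{-l}=0$, so the right-hand side must vanish in $U_q(\lie{sl}_2)$. Since $l$ is odd and $q$ is a primitive $2l$-th root of unity, $q^2$ has order $l$, and $\{q^{-2r}\}_{r=0}^{l-1}$ ranges bijectively over the set $\mu_l$ of $l$-th roots of unity. A short cyclotomic computation, using $\prod_{\omega\in\mu_l}(X-\omega)=X^l-1$ together with $l$ odd, then yields $\prod_{r=0}^{l-1}(K^2 q^{-2r}-1) = K^{2l}-1$. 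Multiplying the resulting identity by the invertible element $K^l q^{-l(l-1)/2}$ produces $K^{2l}=1$.

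The only real choice in the argument is identifying the divided-power element of $U_\A$ whose specialization witnesses the torsion relation; once $\qbin{K;\; 0}{l}$ is chosen, everything else is substitution and a cyclotomic identity. The essential arithmetic input is that $l$ is odd, so that $q^2$ generates $\mu_l$; otherwise the product on the right would only recover a weaker relation on $K$.
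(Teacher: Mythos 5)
Your proof is correct and follows essentially the same route as the paper: centrality of $K^l$ from the $q$-commutation relations, and $K^{2l}=1$ by specializing the product identity for $\qbin{K;\,0}{l}$ at $v=q$, factoring, and using $q^l-q^{-l}=0$. One small difference: the paper writes $\prod_{s=1}^l(K^2 - q^{2s-2}) = K^{2l}-q^{l(l-1)}$ as if it followed by inspection, which is only literally true because $q^{l(l-1)}=1$ (it is not an identity of polynomials in $K^2$ before specializing); your explicit cyclotomic argument via $\{q^{-2r}\}_{r=0}^{l-1}=\mu_l$ and $\prod_{\omega\in\mu_l}(X-\omega)=X^l-1$ is a cleaner way to reach $K^{2l}-1$ and makes the role of the hypothesis that $l$ is odd transparent.
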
 
\begin{proof} 
By definition we have in $U_\A(\lie{sl}_2)$ 
\[ \prod_{s=1}^l (K v^{-s+1} - K^{-1} v^{s-1}) = \prod_{s=1}^l (v^s - v^{-s}) \cdot \qbin{K; \; 0}{l} .\] 
Specializing $v \to q$ we obtain  zero in the right-hand side, since $q^l - q^{-l} = -1 -(-1) =0$, and therefore we have in 
$U_q(\lie{sl}_2)$ 
\[ \prod_{s=1}^l (K q^{-s+1} - K^{-1} q^{s-1}) = K^{-l}  q^{-\frac{l(l-1)}{2}} \prod_{s=1}^l (K^2 - q^{2s-2}) =  K^{-l}  q^{-\frac{l(l-1)}{2}} (K^{2l} -q^{l(l-1)} ) = 0 .\] 
As we have $q^{l(l-1)} = (-1)^{l-1} =1$ since $l$ is odd. Therefore $K^{2l} =1$  in $U_q(\lie{sl}_2)$.  

We have $K^l E^{(p)} = q^{2lp} E^{(p)} K^l = E^{(p)} K^l$ and $K^l F^{(p)} = q^{-2lp} F^{(p)} K^l = F^{(p)} K^l$ for any $p \geq 0$, 
therefore $K^l$ is central in $U_q(\lie{sl}_2)$. 
\end{proof}

\subsection{Weyl, dual Weyl and tilting modules} \label{modules}
There is a unique structure of  $U_v(\lie{sl}_2)$ module on $U_v(\lie{sl}_2)^-$, where $F$ acts by left multiplication, 
$E\cdot 1 =0$, $K \cdot 1 = v^m 1$ for some $m \in \Z$. (The assignment  $K \cdot 1 =  v^m 1$  leads to the definition of modules of type ${\bf 1}$. If we chose $K \cdot 1 = -v^m 1$, we would obtain modules of type ${\bf -1}$, that exhibit similar properties. In what follows we will consider only modules of type ${\bf 1}$). This module is denoted by $Y(m)$ and it has a unique simple quotient $W(m)$. Then we can consider the $U_{\mathcal A}$-modules of $Y_{\mathcal A}(m)$ and $W_{\mathcal A}(m)$, and finally specialize them to $U_q(\lie{sl}_2)$ by a change of base. We will use the same notations $Y(m)$ and $W(m)$ for the obtained $U_q(\lie{sl}_2)$-modules. Let $x$ denote the image of $1$ in $W(m)$, then  $E \cdot x = E^{(l)} \cdot x =0$, and $K x = q^m x$, $\qbin{K ; 0}{l} = \qbin{m}{l} x$. By Corollary \ref{q-bin3} the action of $K$ and $\qbin{K;0}{l}$ determine $m \in \Z$ uniquely, therefore we can say that $x$ has weight $m \in \Z$. 

If $m \geq 0$, then $W(m)$ is finite dimensional of dimension $m+1$ and with character given by the Weyl formula (its weight structure comes from that of the simple module over $U_v(\lie{sl}_2)^-$).

Now we will present the explicit action of the generators of $U_q(\lie{sl}_2)$ in the modules $W(m)$, $m \geq 0$. We can consider the following basis in $W(m)$: 
\[   W(m)  = \{ x, F^{(1)}x,  F^{(2)}x,  \ldots  F^{(m)} x \},  \] 
where $E x = 0, E^{(l)} x =0, Kv = q^m x$.  Using the commutation relations, we derive the following action of the generators $E, E^{(l)}, F, F^{(l)}, K$ on the basis: 
\[ F \cdot F^{(t)} x= [ t+1] F^{(t+1)} x ; \quad \quad  E \cdot F^{(t)} x=  [m-t+1] F^{(t-1)} x \] 
\[ K \cdot  F^{(t)} x =  q^{m-2t} F^{(t)} x, \quad \quad \qbin{K ; 0}{l} F^{(t)} x =  \qbin{m-2t}{l} F^{(t)} x \] 
\[ F^{(l)} \cdot F^{(t)} x = \qbin{l+t}{l} F^{(l+t)} x,  \] 
 \[  E^{(l)} F^{(t)} x =  \qbin{m+l-t}{l} F^{(t-l)} x .  \] 
Here we assume that $F^{(<0)}x = 0, F^{(>m)}x =0$. 
The module $W(m)$ has a unique simple quotient $L(m)$. Therefore, for any $m \in \Z$ there exists a simple module $L(m)$ of type ${\bf 1}$ with  a primitive vector of weight $m$.

We can also define the dual Weyl module by setting $W^*(m) = {\rm Hom}(W(m), \Q(q))$ for any $m \in \mathbb{Z}_{\geq 0}$. In particular, 
we have  that if $Kv = q^k v$ for $v \in W(m)$, then $Kv^* = q^{-k} v$ for $v^* \in W^*(m)$. 

The properties of modules $L(m), W(m), W^*(m)$ are investigated in detail in \cite{Lus89}, \cite{AT} ,\cite{APW1}, section 4. 
Here is a summary of their properties: 
\begin{theorem} \cite{AT}, section 2.  \label{Weyl} 
For each $m \in \mathbb{Z}_{\geq 0}$ there exists a Weyl module $W(m)$, dual Weyl module $W^*(m)$ over $U_q({\lie sl}_2)$ defined above 
that have the following properties: 
\begin{enumerate} 
\item $W(m) \simeq W^*(m) \simeq L(m)$ if and only if either $0 \leq m \leq l-2$, or $m = lm_1 -1$.  
\item If $m = lm_1 + m_0$ with $0 \leq m_0 \leq l-2$ and $m_1 \geq 0$, then we have the short exact sequences of $U_q({\lie sl}_2)$-modules: 
\[ 0 \longrightarrow L(lm_1 - m_0 -2) \longrightarrow W(m)  \longrightarrow L(m) \longrightarrow 0 , \]
\[ 0 \longrightarrow L(m) \longrightarrow W^*(m) \longrightarrow L(m_1 l -m_0 -2) \longrightarrow 0 .\] 
\end{enumerate} 
\end{theorem}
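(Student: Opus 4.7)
The plan is to analyze the maximal vectors of $W(m)$ directly from the formulas for the action of $E$ and $E^{(l)}$, identify the unique proper submodule when $W(m)$ is not simple, and then deduce the claims for $W^*(m)$ by duality.

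First I would locate the maximal vectors, i.e., nonzero weight vectors annihilated by both $E$ and $E^{(l)}$. Since each joint weight space for $K$ and $\qbin{K;0}{l}$ in $W(m)$ is one-dimensional, every such vector is a scalar multiple of some $F^{(t)}x$. The formula $E\cdot F^{(t)}x = [m-t+1] F^{(t-1)}x$ vanishes iff $[m-t+1]=0$, i.e.\ iff $t\equiv m+1\pmod l$; writing $m=lm_1+m_0$ with $0\leq m_0\leq l-1$, the admissible values are $t=m_0+1+kl$ with $k\geq 0$. Since $q^l=-1$, the identity $[jl-s]=(-1)^{j+1}[s]$ gives $\qbin{jl-1}{l}=(-1)^{j+1}\neq 0$, so $E^{(l)}\cdot F^{(t)}x = \qbin{m+l-t}{l}F^{(t-l)}x$ is nonzero at every admissible $t$ unless $F^{(t-l)}x = 0$, which forces $t<l$. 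In both cases of (a) this rules out any nontrivial maximal vector: for $0\leq m\leq l-2$ no admissible $t$ lies in $\{1,\ldots,m\}$, and for $m=lm_1-1$ the smallest admissible $t$ is $l$, so $F^{(t-l)}x = x \neq 0$. Hence $W(m)$ has no proper nonzero submodule and $W(m)\cong L(m)$, settling one direction of (a).

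When $m_1\geq 1$ and $0\leq m_0\leq l-2$, the smallest admissible value is $t=m_0+1<l$, so $v':=F^{(m_0+1)}x$ is a maximal vector of weight $lm_1-m_0-2$, and by the above analysis it is the only one (up to scalar). The cyclic submodule $N:=U_q(\lie{sl}_2)\cdot v'$ is therefore the unique maximal proper submodule of $W(m)$, and by the universal property of Weyl modules $N$ is a quotient of $W(lm_1-m_0-2)$. To identify $N$ with $L(lm_1-m_0-2)$ I would compute its dimension. Using $F^{(k)}v' = \qbin{k+m_0+1}{k}F^{(k+m_0+1)}x$, the basis vector $F^{(s)}x$ lies in $N$ iff $\qbin{s}{m_0+1}$ is nonzero at $q$; by the Lucas-type factorization of $q$-binomials at a primitive $2l$-th root of unity with $l$ odd, this holds iff the residue of $s$ modulo $l$ is at least $m_0+1$. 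Counting such $s\in\{m_0+1,\ldots,m\}$ gives $\dim N = m_1(l-m_0-1)$, which matches $\dim L(lm_1-m_0-2) = (l-m_0-1)m_1$ coming from Steinberg's tensor product theorem, so $N\cong L(lm_1-m_0-2)$. The quotient $W(m)/N$ is then a cyclic highest-weight module with no proper nonzero submodule (any such would pull back to a submodule of $W(m)$ strictly containing $N$), hence $W(m)/N\cong L(m)$. This yields the short exact sequence in (b) together with the reverse direction of (a) for $W(m)$.

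Finally I would obtain the corresponding statements for $W^*(m)$ by applying the contragredient functor $M\mapsto M^* = \mathrm{Hom}_{\Q(q)}(M,\Q(q))$ defined via the antipode; this functor is exact and sends each simple $L(n)$ to itself, as one reads off from the weight set of $L(n)$ being symmetric about $0$ together with the type-${\bf 1}$ normalization of $K$. Dualizing the short exact sequence from (b) reverses sub and quotient, yielding $0\to L(m)\to W^*(m)\to L(lm_1-m_0-2)\to 0$; the statement of (a) for $W^*(m)$ is then immediate. The main obstacle throughout will be the $q$-Lucas bookkeeping: correctly establishing the identities for $\qbin{jl-1}{l}$ and $\qbin{s}{m_0+1}$ at $q$ and counting the contributing indices. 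Once those calculations are in hand, the remaining module-theoretic arguments are standard facts about cyclic highest-weight modules and contragredient duality.
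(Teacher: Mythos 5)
This theorem is cited in the paper from \cite{AT} without an in-paper proof, so there is no internal argument to compare against; you are supplying your own. Your overall strategy --- locate maximal vectors from the explicit $E,E^{(l)}$ action, identify the unique singular vector $v'=F^{(m_0+1)}x$, take its cyclic span, and dualize --- is sound, but two specific steps as written contain genuine errors.

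First, the identity $\qbin{jl-1}{l}=(-1)^{j+1}$ is false. By Corollary~\ref{q-bin3}(a), writing $jl-1=(j-1)l+(l-1)$ gives $\qbin{jl-1}{l}=(-1)^{jl}(j-1)=(-1)^{j}(j-1)$ for $l$ odd. In particular this coefficient \emph{does} vanish, namely at $j=1$. So the claim that $\qbin{m+l-t}{l}$ is "nonzero at every admissible $t$" does not follow from what you wrote and is not unconditionally true; it is rescued only because at an admissible $t=m_0+1+kl$ one has $m+l-t=l(m_1+1-k)-1$, so $j=m_1+1-k$, and $j=1$ would force $t=m+1$, outside the range $1\le t\le m$. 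You need to make that range check explicit; as stated the step leans on a wrong computation and reaches the right conclusion by coincidence.

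Second, identifying $N=U_q\cdot v'$ as "the unique maximal proper submodule" is not what your argument establishes, and the parenthetical justification that a proper nonzero submodule of $W(m)/N$ "would pull back to a submodule strictly containing $N$" is not a contradiction. What you actually showed (every nonzero submodule contains a maximal vector, and the only candidates are $x$ and $v'$) proves $N$ is the unique \emph{minimal} nonzero submodule; in particular $N$ is simple of highest weight $lm_1-m_0-2$, which gives $N\cong L(lm_1-m_0-2)$ without the dimension count. To see $W(m)/N\cong L(m)$ you should instead use the dimension count you already set up: $\dim N=m_1(l-m_0-1)$, hence $\dim(W(m)/N)=(m_0+1)(m_1+1)=\dim L(m)$; since $N$ is proper and $L(m)$ is the unique simple quotient of $W(m)$, the quotient $W(m)/N$ surjects onto $L(m)$, and equal dimensions force an isomorphism. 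With these two repairs the argument is correct and yields both parts of the statement.
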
 

\begin{definition} A tilting module $T $ (of type ${\bf 1}$) over $U_q(\lie{sl}_2)$ is a finite dimensional module that has a filtration by Weyl modules and a filtration by dual Weyl modules. 
\[   0 \subset  M_1 \subset M_2 \subset ... \subset M_n = T , \quad \quad M_i / M_{i-1} \cong W_{k(i)}  ,  \] 
\[  0 \subset Q_1 \subset Q_2 \subset  ... \subset Q_n = T , \quad \quad Q_i / Q_{i-1} \cong W_{m(i)}^* , \]
for some $k(i)$, $m(i) \in \mathbb{Z}_{\geq 0}$. 
\end{definition}

It follows from the definition that a dual of a tilting module is also tilting. 
\begin{theorem} \cite{AT}, Section 2. \label{tilt_prop1} 
\begin{enumerate} 
\item For each  $k \in \Z_{\geq 0}$ there exists a unique indecomposable tilting module $T(k)$ of type ${\bf 1}$ such that $T(k) : W(k)=  1$ and $T(k) : W(k') =0$ for $k' >k$. These modules form a complete list of inequivalent indecomposable tilting modules of type ${\bf 1}$  over  $U_q(\lie{sl}_2)$. 
\item  A tensor product, a direct sum and a direct summand of tilting modules is a tilting module. 
\end{enumerate} 
\end{theorem}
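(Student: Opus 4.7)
The plan is to proceed via the standard Ext-vanishing machinery for highest-weight categories, in its quantum adaptation. The central technical input is the vanishing $\mathrm{Ext}^1_{U_q(\lie{sl}_2)}(W(\lambda), W^*(\mu)) = 0$ for all $\lambda, \mu \in \mathbb{Z}_{\geq 0}$, which I would establish using Theorem \ref{Weyl} together with the weight structure of $W(\lambda)$ and $W^*(\mu)$: any putative nonsplit extension forces the socle $L(\mu)$ of $W^*(\mu)$ to receive the highest-weight vector of $W(\lambda)$, which is incompatible with the quotient structure of $W^*(\mu)$ described in Theorem \ref{Weyl}(b) unless the extension splits. From this I would derive the standard cohomological criterion that a finite-dimensional module $M$ admits a Weyl (resp.\ dual Weyl) filtration if and only if $\mathrm{Ext}^1(M, W^*(\mu)) = 0$ (resp.\ $\mathrm{Ext}^1(W(\lambda), M) = 0$) for all $\lambda, \mu$, by induction on the length of $M$.

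For existence of $T(k)$ in part (a) I would induct on $k$. The base case $0 \leq k \leq l-1$, together with $k = lm_1 - 1$, is immediate from Theorem \ref{Weyl}(a): there $W(k) = W^*(k) = L(k)$, so one sets $T(k) := L(k)$. For $k = lm_1 + m_0$ with $m_1 \geq 1$ and $0 \leq m_0 \leq l-2$, I would use a nonzero element of $\mathrm{Ext}^1(W(k - 2m_0 - 2), W(k))$, whose existence is extracted from the short exact sequences in Theorem \ref{Weyl}(b), to glue $W(k)$ and $W(k - 2m_0 - 2)$ into a module carrying a Weyl filtration with $W(k)$ on top and $W(k - 2m_0 - 2)$ on the bottom; that the same module also admits a dual Weyl filtration is a diagram chase from the Ext-vanishing of the first paragraph. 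Uniqueness is a Hom-lifting argument: given two candidates $T, T'$, the Ext-vanishing allows the identity on the $W(k)$-quotient of $T$ to lift to a morphism $T \to T'$, and indecomposability together with Fitting's lemma promote this to an isomorphism. Classification then follows from Krull--Schmidt, since any indecomposable tilting module has a unique maximal weight $k$ with $W(k)$ occurring exactly once in its Weyl filtration, otherwise a summand would split off.

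Part (b) is immediate for direct sums from the definition, and for direct summands the Ext-criterion applies at once: if $T = T_1 \oplus T_2$ is tilting, then $\mathrm{Ext}^1(T_i, W^*(\mu))$ is a direct summand of $\mathrm{Ext}^1(T, W^*(\mu)) = 0$, and symmetrically on the other side, so each $T_i$ has both required filtrations. The main obstacle is the tensor product assertion: by the Ext-criterion and a standard devissage along the respective filtrations, it suffices to show that $W(\lambda) \otimes W(\mu)$ has a Weyl filtration for all $\lambda, \mu \in \mathbb{Z}_{\geq 0}$. The critical base case is $\mu = 1$, which I would handle using the explicit action formulas from Section \ref{modules}: when $l \nmid \lambda + 1$ a direct computation gives $W(\lambda) \otimes W(1) \cong W(\lambda+1) \oplus W(\lambda-1)$, while when $l \mid \lambda + 1$ one gets a nonsplit short exact sequence $0 \to W(\lambda+1) \to W(\lambda) \otimes W(1) \to W(\lambda-1) \to 0$, detected by the nontrivial action of $E^{(l)}$ on the appropriate weight spaces. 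The general case proceeds by induction on $\mu$, realising $W(\mu)$ as a quotient of $W(\mu - 1) \otimes W(1)$ with Weyl-filtered kernel and propagating the Weyl-filtration property through the induced exact sequence together with the Ext-vanishing.
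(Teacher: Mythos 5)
The paper does not prove this theorem; it is quoted from \cite{AT}, so there is no in-paper argument to compare against. Your Ext-machinery route is the standard approach in that literature, and the parts of your proposal dealing with the existence of $T(k)$, uniqueness via Hom-lifting and Fitting's lemma, classification via Krull--Schmidt, and the direct-summand assertion are all sound in outline.

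The gap is in the tensor-product step. You propose to realise $W(\mu)$ as a \emph{quotient} of $W(\mu-1)\otimes W(1)$ with Weyl-filtered kernel and then propagate the Ext-vanishing through the tensored exact sequence. But when $l\mid\mu$ the extension inside $W(\mu-1)\otimes W(1)$ is nonsplit and lives in $\mathrm{Ext}^1(W(\mu-2),W(\mu))$, so $W(\mu)$ sits as the \emph{submodule}, not the quotient; you state this direction correctly in your own $\mu=1$ base case, and it is forced by the fact that $\mathrm{Ext}^1(W(\lambda),W(\nu))\neq 0$ only when $\lambda<\nu$. The direction matters because the Ext-criterion propagates through short exact sequences only one way: given $0\to A\to B\to C\to 0$ with $B,C$ Weyl-filtered, the long exact sequence yields $\mathrm{Ext}^1(A,W^*(\nu))\hookrightarrow\mathrm{Ext}^2(C,W^*(\nu))=0$, so the \emph{kernel} $A$ is Weyl-filtered; but if only $A,B$ are Weyl-filtered one learns merely that $\mathrm{Ext}^1(C,W^*(\nu))$ is a quotient of $\mathrm{Hom}(A,W^*(\nu))$, which is generally nonzero, so nothing follows about the quotient. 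With the corrected direction you also need $\mathrm{Ext}^{\geq2}(W(\lambda),W^*(\nu))=0$, which your socle argument in the first paragraph establishes only in degree one (this is repairable by dimension-shifting, or by the filtration-intersection argument which stays in degree one). Alternatively, and more economically for $\mathfrak{sl}_2$: once you know $T\otimes T(1)$ is tilting (your $\mu=1$ computation) and that direct summands of tilting modules are tilting, a quick induction on $k$ shows every $T(k)$ is a direct summand of some $T(1)^{\otimes N}$; then any tensor product of tilting modules is a direct summand of some $T(1)^{\otimes M}$, hence tilting, sidestepping the general $W(\lambda)\otimes W(\mu)$ devissage entirely.
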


Thus the tilting modules over $U_q(\lie{sl}_2)$ form an additive category, closed under finite direct sums, finite tensor products and taking the dual. 

\begin{theorem} \cite{AT}, \label{isomLW}
 Proposition 2.20. 
\label{tilt_prop2}
\begin{enumerate} 
\item We have $T(k) \cong W(k) \cong W(k)^* \cong L(k)$  if and only if either $ 0 \leq k \leq l-2$, or $k = lk_1 -1$ for some $k_1 \in \mathbb{Z}_{> 0}$. 
\item For $k = l k_1 + k_0$, $k_1 \in \mathbb{Z}_{> 0}$, $0 \leq k_0 \leq l-2$ we have the short exact sequence of  $U_q(\lie{sl}_2)$-modules
\[  0 \longrightarrow  W(lk_1+k_0 ) \longrightarrow T(lk_1 + k_0) \longrightarrow W(l k_1 -2 - k_0) \longrightarrow 0 .\] 
Since  $T_k^* \simeq T_k$, we also have  
\[  0 \longrightarrow  W^*(lk_1- 2 -k_0 ) \longrightarrow T(lk_1 + k_0) \longrightarrow W^*(l k_1 + k_0) \longrightarrow 0 .\] 
\end{enumerate} 
\end{theorem}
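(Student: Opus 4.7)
Part (a) follows directly from Theorem \ref{Weyl} combined with the uniqueness in Theorem \ref{tilt_prop1}(a). For the \emph{if} direction, when $0 \le k \le l-2$ or $k = lk_1 - 1$, Theorem \ref{Weyl}(a) gives $W(k) \cong W^*(k) \cong L(k)$; this simple module is its own Weyl and dual Weyl filtration, hence an indecomposable tilting module with $W(k)$ as its unique Weyl factor, so it must be $T(k)$ by uniqueness. For the converse, write $k = lk_1 + k_0$ with $k_1 \ge 1$ and $0 \le k_0 \le l-2$; Theorem \ref{Weyl}(b) shows $W(k)$ and $W^*(k)$ have different socles ($L(lk_1-k_0-2)$ and $L(k)$), so $W(k) \not\cong W^*(k)$. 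I would then note that $T(k) \cong T(k)^*$: dualizing the dual Weyl filtration of $T(k)$ yields a Weyl filtration of $T(k)^*$ with the same multiplicities (since $\mathrm{ch}\, W(k) = \mathrm{ch}\, W^*(k)$ and Weyl characters are linearly independent), so $T(k)^*$ is an indecomposable tilting module with the same largest Weyl factor and therefore equals $T(k)$. An identification $T(k) \cong W(k)$ would then force $W(k) \cong W^*(k)$, a contradiction.

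For part (b) the plan is to realize $T(k)$ as a non-split extension. The first step is to establish
\[
\dim \mathrm{Ext}^1_{U_q(\lie{sl}_2)}\bigl(W(lk_1-k_0-2),\, W(k)\bigr) = 1.
\]
I would apply $\mathrm{Hom}\bigl(W(lk_1-k_0-2), -\bigr)$ to the short exact sequence $0 \to L(lk_1-k_0-2) \to W(k) \to L(k) \to 0$ of Theorem \ref{Weyl}(b) and track the connecting homomorphism, using that $\mathrm{Hom}$ from a Weyl module to a simple module is at most one-dimensional and is nonzero only when the highest weights coincide; nonvanishing of the extension class is ensured by the linkage principle, which places $k$ and $lk_1-k_0-2$ in the same block of the category of $U_q(\lie{sl}_2)$-modules. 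Any non-split extension $T$ fits into
\[
0 \to W(k) \to T \to W(lk_1-k_0-2) \to 0,
\]
is indecomposable (as the extension does not split), and has a built-in Weyl filtration.

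The crux is to verify that $T$ is tilting, equivalently that $T \cong T^*$. Dualizing the displayed sequence gives $0 \to W^*(lk_1-k_0-2) \to T^* \to W^*(k) \to 0$, so $T^*$ admits a dual Weyl filtration; comparing socles, heads, and composition factors, and using the one-dimensionality of the relevant Ext space, forces an isomorphism $T \cong T^*$. This equips $T$ with a dual Weyl filtration, so $T$ is tilting, and since $W(k)$ is its largest Weyl factor with multiplicity one, Theorem \ref{tilt_prop1}(a) gives $T \cong T(k)$. The second short exact sequence in (b) then follows by dualizing the first and invoking $T(k)^* \cong T(k)$. The hardest step in the program is the Ext computation together with the self-duality check; in practice one either analyzes the action of the divided powers $E^{(l)}, F^{(l)}$ on explicit bases extending the basis of $W(k)$ given in Section \ref{modules}, or invokes the translation functor formalism of Andersen-Polo-Wen to bypass the direct computation.
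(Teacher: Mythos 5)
The paper itself does not prove this theorem --- it is quoted verbatim from \cite{AT}, Proposition 2.20 --- so you are supplying an argument the paper leaves to the literature. Your treatment of part (a) is sound: for $k$ of the stated form, $W(k)\cong W^*(k)\cong L(k)$ is simple and has trivial Weyl and dual Weyl filtrations, hence is indecomposable tilting and must equal $T(k)$ by uniqueness; and for $k=lk_1+k_0$ with $k_1\ge 1$, $0\le k_0\le l-2$, the distinct socles of $W(k)$ and $W^*(k)$ together with the self-duality of $T(k)$ (correctly deduced from the character identity $\mathrm{ch}\,W(\mu)=\mathrm{ch}\,W^*(\mu)$ and the uniqueness in Theorem \ref{tilt_prop1}) rule out $T(k)\cong W(k)$.

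Part (b), however, contains two genuine gaps. First, the assertion that ``nonvanishing of the extension class is ensured by the linkage principle'' is false as stated: linkage gives the \emph{vanishing} of $\mathrm{Ext}^1$ across blocks, not its nonvanishing within a block. Indeed, running the long exact sequence you propose --- applying $\mathrm{Hom}(W(lk_1-k_0-2),-)$ to $0\to L(lk_1-k_0-2)\to W(k)\to L(k)\to 0$ --- and using $\mathrm{Ext}^1(\Delta(\lambda),L(\lambda))=0$ only produces an injection $\mathrm{Ext}^1(W(lk_1-k_0-2),W(k))\hookrightarrow\mathrm{Ext}^1(W(lk_1-k_0-2),L(k))$, leaving the target uncomputed; nothing about linkage makes it nonzero. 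Second, the claim that ``comparing socles, heads, and composition factors \ldots forces an isomorphism $T\cong T^*$'' is an assertion in place of an argument. Being a nonsplit extension of $W(lk_1-k_0-2)$ by $W(lk_1+k_0)$ hands you a Weyl filtration for free, but a dual Weyl filtration is precisely what you do not get for free, and the circularity risk (needing tiltingness to get self-duality, needing self-duality to get tiltingness) is real; this step needs genuine input. The route you gesture at in the last sentence --- translation functors in the sense of Andersen--Polo--Wen, or equivalently exhibiting $T(lk_1+k_0)$ as a direct summand of $L(lk_1)\otimes T(2(l-1)-k_0)$ as in Theorem \ref{tilt_prod} of this paper --- is the one that actually works: tensoring by a tilting module with a known module preserves both filtrations, so indecomposability, self-duality, and the two short exact sequences come out together, and the one-dimensionality of the $\mathrm{Ext}$ group becomes a corollary rather than a prerequisite. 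As written, though, the sketch does not establish either of the two load-bearing claims.
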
 

\begin{corollary}  \label{tilt_char} 
The character of an indecomposable tilting module over $U_q(\lie{sl}_2)$  is 
\[  {\on{ch}}_{T(k)} = {\on{ch}}_{W(k)} , \quad \quad {\rm if} \quad  0 \leq k \leq l-2 \quad {\rm or} \quad k = lk_1-1, \;\; k_1 \in \mathbb{Z}_{> 0}, \]
\[  {\on{ch}}_{T(lk_1 + k_0)} = {\on{ch}}_{W(lk_1 + k_0)} + {\on{ch}}_{W(lk_1 - 2 - k_0)} , \quad \quad  k_1 \in \mathbb{Z}_{> 0}, \;\; 0 \leq k_0 \leq l-2.\] 
\end{corollary}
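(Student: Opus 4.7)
The corollary is essentially a direct bookkeeping consequence of Theorem \ref{tilt_prop2}, so my plan is to simply unpack the two cases of that theorem and appeal to additivity of characters on short exact sequences.

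First, for the range $0 \leq k \leq l-2$ and for $k = lk_1 - 1$ with $k_1 \in \Z_{>0}$, part (a) of Theorem \ref{tilt_prop2} gives an honest isomorphism $T(k) \cong W(k)$ of $U_q(\lie{sl}_2)$-modules. Since the character is an isomorphism invariant computed from the $K$-weight space decomposition, this immediately yields $\on{ch}_{T(k)} = \on{ch}_{W(k)}$. No additional computation is needed.

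For the remaining case $k = lk_1 + k_0$ with $k_1 \in \Z_{>0}$ and $0 \leq k_0 \leq l-2$, I would invoke the first short exact sequence of Theorem \ref{tilt_prop2}(b),
\[
0 \longrightarrow W(lk_1+k_0) \longrightarrow T(lk_1+k_0) \longrightarrow W(lk_1 - 2 - k_0) \longrightarrow 0,
\]
and observe that each arrow is a morphism of $U_q(\lie{sl}_2)$-modules, hence in particular respects the $K$-action and the weight space decomposition. Therefore the sequence restricts to a short exact sequence of weight spaces for every weight, and summing $x^{\text{weight}}$ with appropriate multiplicity gives
\[
\on{ch}_{T(lk_1+k_0)} = \on{ch}_{W(lk_1+k_0)} + \on{ch}_{W(lk_1-2-k_0)},
\]
which is the required formula. (One could alternatively use the dual Weyl filtration from the same theorem; the answer agrees because $\on{ch}_{W^*(m)} = \on{ch}_{W(m)}$, as the weight structure of $W^*(m)$ is the negative of that of $W(m)$, but in $\on{sl}_2$ this coincides with the original Weyl character by the Weyl symmetry.)

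There is no real obstacle here: the entire content has been deposited in Theorem \ref{tilt_prop2}, and the corollary only records the resulting character identity. The only small point worth noting explicitly in the write-up is the additivity of $\on{ch}$ on short exact sequences, which is a one-line remark about weight space dimensions. Thus the proof can be written in a few lines by splitting into the two cases of the theorem and quoting the appropriate isomorphism or short exact sequence.
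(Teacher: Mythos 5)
Your proof is correct and matches the paper's intent exactly: the paper states Corollary~\ref{tilt_char} as an immediate consequence of Theorem~\ref{tilt_prop2} without further argument, and you have correctly supplied the two-case unpacking (isomorphism in case (a), additivity of characters on the short exact sequence in case (b)).
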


\subsection{Tensor product theorem for simple modules of type $1$ over $U_q(\lie{sl}_2)$} \label{tpthm}
Let $u_q = u_q(\lie{sl}_2)$ be the subalgebra of $U_q(\lie{sl}_2)$ generated by $E,F, K^{\pm 1}$. 

\begin{proposition}  \label{Lm0} 
For $m \in \Z$ such that $0 \leq m \leq l-1$, consider the simple $U_q(\lie{sl}_2)$ module $L(m)$, and let $x \in L(m)$ 
be a nonzero vector such that $K x = q^m x$ and $Ex =0$. Then 
\begin{enumerate} 
\item $L(m) = u_q \cdot x$.  
\item The restriction of $L(m)$ to $u_q$ is a simple $u_q$ module. 
\item $E^{(l)} , F^{(l)}$ act as $0$ in $L(m)$. 
\end{enumerate} 
\end{proposition}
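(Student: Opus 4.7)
The plan is to use the fact that for $0 \leq m \leq l-1$, the simple module $L(m)$ coincides with the Weyl module $W(m)$. Indeed, by Theorem~\ref{isomLW}(a), this isomorphism holds both when $0 \leq m \leq l-2$ and when $m = l-1 = l \cdot 1 - 1$. So throughout the proof I can work with the explicit basis $\{F^{(t)} x : 0 \leq t \leq m\}$ of $L(m)$ together with the action formulas of $E, F, E^{(l)}, F^{(l)}, K$ recorded in Section~\ref{modules}. The fundamental arithmetic input is that $[j] \neq 0$ for $1 \leq j \leq l-1$ (since $[j] = 0$ iff $l \mid j$), so $[t]!$ is invertible in $\Q(q)$ for $0 \leq t \leq l-1$.

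For part (a), I would note that for each $0 \leq t \leq m \leq l-1$ the divided power satisfies $F^{(t)} x = \tfrac{1}{[t]!} F^{t} x$, and since $[t]! \neq 0$ this exhibits every basis vector of $L(m)$ as an element of $u_q \cdot x$. Hence $u_q \cdot x = L(m)$. For part (c), I would invoke the explicit formulas
\[ F^{(l)} \cdot F^{(t)} x = \qbin{l+t}{l} F^{(l+t)} x, \qquad E^{(l)} \cdot F^{(t)} x = \qbin{m+l-t}{l} F^{(t-l)} x. \]
In the first, $l + t > m$ since $t \geq 0$ and $m \leq l-1$, so $F^{(l+t)} x = 0$. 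In the second, $t - l \leq m - l < 0$, so $F^{(t-l)} x = 0$. Both $E^{(l)}$ and $F^{(l)}$ therefore act as zero on $L(m)$.

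For part (b), let $M$ be a nonzero $u_q$-submodule of $L(m)$. The weights $m - 2t$ for $0 \leq t \leq m$ all lie in $\{-(l-1), \ldots, l-1\}$, so they are pairwise distinct and in fact pairwise distinct modulo $2l$ (recall $K^{2l}=1$); consequently the operator $K$ on $L(m)$ has $m+1$ distinct eigenvalues and $M$ is a direct sum of the weight lines it contains. Let $t_0$ be the smallest index such that $F^{(t_0)} x \in M$. If $t_0 > 0$, then from $E \cdot F^{(t_0)} x = [m-t_0+1] F^{(t_0-1)} x$ and the fact that $1 \leq m - t_0 + 1 \leq l-1$ gives $[m-t_0+1] \neq 0$, so $F^{(t_0-1)} x \in M$, contradicting minimality. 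Hence $t_0 = 0$, so $x \in M$, and by part (a) we get $L(m) = u_q \cdot x \subseteq M$, proving simplicity.

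The only subtle step is the weight-space argument in part (b), where one must verify that the weights of $L(m)$ are all distinct modulo $2l$ — but this is automatic from the range $m \leq l-1$. The rest of the proof is bookkeeping around the non-vanishing of $[j]$ for $1 \leq j \leq l-1$, so I expect no serious obstacle.
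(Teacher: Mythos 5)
Your proof is correct and takes essentially the same approach as the paper: both reduce to the isomorphism $W(m)\cong L(m)$ for $0\le m\le l-1$ and then read off all three claims from the explicit basis $\{F^{(t)}x\}_{0\le t\le m}$ and the action formulas in Section~\ref{modules}; the paper simply leaves the bookkeeping (non-vanishing of $[j]$ for $1\le j\le l-1$, distinctness of $K$-eigenvalues, the $E$-lowering argument) to the reader, while you spell it out.
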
 
\begin{proof} 
This follows from the fact that $W(m) \simeq L(m)$ for $0 \leq m \leq l-1$ by Theorem \ref{Weyl} and the explicit definition of $W(m)$. From the same definition we have that $E^{(l)}$ and $F^{(l)}$ act by $0$ on any vector in $L(m)$. 
\end{proof} 

Let $\mathcal{U} = \mathcal{U}(\lie{sl}_2)$ be the subalgebra of $U_q(\lie{sl}_2)$ generated by $E^{(l)}$, $F^{(l)}$.
\begin{proposition}  \label{L(lm)} 
 Let $m = lm_1 \in \mathbb{Z}_{\geq 0}$, and consider the simple $U_q(\lie{sl}_2)$ module $L(m)$, and $x \in L(m)$ 
 be such that $\qbin{K ; 0}{l} x = \qbin{l m_1}{l} x$ and $E^{(l)}x =0$.  Then 
 \begin{enumerate} 
 \item $E, F, K - (-1)^{m_1}$ act as $0$ in $L(m)$. 
 \item $L(m) = \mathcal{U} \cdot x$. 
 \end{enumerate} 
\end{proposition}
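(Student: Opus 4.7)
The plan is to realize $L(lm_1)$ explicitly as the simple quotient $W(lm_1)/L(lm_1-2)$ coming from the short exact sequence in Theorem \ref{Weyl}(b) (with $m_0=0$), and to read off the action of the generators on an explicit basis of the quotient. First, under our hypotheses, combined with Corollary \ref{q-bin3}, the vector $x$ is forced to lie in the one-dimensional weight $lm_1$ subspace of $L(lm_1)$, and the condition $E^{(l)}x=0$ holds automatically there; hence after rescaling we may take $x$ to be the image $\bar v$ of the highest weight generator $v$ of $W(lm_1)$.

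The heart of the argument is to exhibit $L(lm_1-2)\subset W(lm_1)$ concretely as
\[ S := \mathrm{span}\bigl\{F^{(t)}v : 1\leq t\leq lm_1,\; l\nmid t\bigr\}. \]
Stability of $S$ under the $U_q(\lie{sl}_2)$-action is the key check: from the explicit formulas of Section \ref{modules}, the generators $E^{(l)}$, $F^{(l)}$, $K$ and $\qbin{K;0}{l}$ preserve the residue of $t$ modulo $l$, so obviously leave $S$ invariant. The generators $E$ and $F$ shift $t$ by $\pm 1$, but whenever such a shift would land on some $F^{(t')}v$ with $l\mid t'$, the corresponding coefficient $[t+1]$ (for $F$) or $[lm_1-t+1]$ (for $E$) is of the form $[l\cdot(\text{integer})]$ and therefore vanishes at $q$, since $[l]=0$. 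Because the composition series of $W(lm_1)$ is $0\subset L(lm_1-2)\subset W(lm_1)$ and $\dim W(lm_1)/S=m_1+1=\dim L(lm_1)$, we conclude $S=L(lm_1-2)$, so that $\{F^{(lk)}\bar v : 0\leq k\leq m_1\}$ is a basis of $L(lm_1)$.

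On this basis everything becomes transparent. For each $k$ one has $F\cdot F^{(lk)}\bar v=[lk+1]F^{(lk+1)}\bar v=0$ because $F^{(lk+1)}v\in S$; similarly $E\cdot F^{(lk)}\bar v=0$ (with $F^{(-1)}v=0$ when $k=0$); and $K\cdot F^{(lk)}\bar v=q^{l(m_1-2k)}F^{(lk)}\bar v=(-1)^{m_1}F^{(lk)}\bar v$ using $q^l=-1$. This proves part (a). For part (b), Proposition \ref{E^lp}(b) applied to $F$ gives $F^{(lk)}=(-1)^{k(k-1)/2}(F^{(l)})^k/k!$, so each basis vector $F^{(lk)}\bar v$ lies in $\mathcal{U}\cdot x$, yielding $L(lm_1)=\mathcal{U}\cdot x$.

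The only delicate step is the stability verification for $S$ in the second paragraph, which requires carefully tracking the vanishing of quantum integers at the $2l$-th root of unity $q$; once this combinatorics is organized, the remainder of the proof reduces to direct computation on the chosen bases.
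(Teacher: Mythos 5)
Your proof is correct but takes a genuinely different route from the paper's. The paper works entirely \emph{inside} $L(lm_1)$: it first uses the short exact sequence of Theorem~\ref{Weyl} to deduce $Ex = Fx = 0$ for the given $x$, then defines subspaces $V_0 = \CC\cdot x$, $V_k = E^{(l)}V_{k-1} + F^{(l)}V_{k-1}$, and propagates the vanishing of $E$ and $F$ to all $V_k$ by induction using the commutation identities $FE^{(l)} = E^{(l)}F - \frac{-Kq+K^{-1}q^{-1}}{q-q^{-1}}E^{(l-1)}$ and $EF^{(l)} = F^{(l)}E + F^{(l-1)}\frac{-Kq+K^{-1}q^{-1}}{q-q^{-1}}$; simplicity of $L(lm_1)$ then forces $\sum_k V_k = L(lm_1)$. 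You instead work \emph{upstairs} in $W(lm_1)$, exhibit the concrete subspace $S = \mathrm{span}\{F^{(t)}v : l\nmid t\}$, check $U_q$-stability directly from the explicit generator action, identify $S$ with the unique maximal (and simple) submodule $L(lm_1-2)$, and read everything off the explicit basis $\{F^{(lk)}\bar v\}$ of the quotient. Your route is more concrete and yields an explicit weight basis of $L(lm_1)$ as a by-product; the paper's is more intrinsic (it never needs to realize $L(lm_1)$ as a quotient or track individual coefficients) and is the sort of argument that generalizes more painlessly to higher rank. Two small points to tighten. First, the sentence ``$\dim W(lm_1)/S = m_1+1 = \dim L(lm_1)$'' is slightly circular as written, since $\dim L(lm_1)$ is not established independently; the clean statement is simply that $S$ is a nonzero proper submodule of the cyclic module $W(lm_1)$, whose unique maximal submodule $L(lm_1-2)$ is simple, so $S = L(lm_1-2)$ (and then $\dim L(lm_1)=m_1+1$ falls out). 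Second, pinning $x$ to the weight-$lm_1$ line needs a touch more than just citing Corollary~\ref{q-bin3}: one should note that $\qbin{K;0}{l}$ acts on the weight-$l(m_1-2k)$ space by $(-1)^{m_1+1}(m_1-2k)$, and these scalars are distinct, so the hypothesized eigenvalue forces $k=0$. Both are cosmetic and easily patched.
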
 
\begin{proof} 
Define inductively the subspaces in $L(m)$ by setting  $V_0 = \CC \cdot x$, $V_k = F^{(l)} V_{k-1} + E^{(l)} V_{k-1}$. 
We will prove by induction that $EV_k = FV_k=0$. 
 
By Theorem \ref{Weyl} we have 
\[ 0 \longrightarrow L(lm_1 -2) \longrightarrow W(l m_1)  \longrightarrow L(lm_1) \longrightarrow 0 . \]
Taking into account the definition of $W(l m_1)$, we have that $E \tilde{x} =0$,  $F \tilde{x} \in L(lm_1 -2)$, where $\tilde{x}$ is a lift of $x \in L(lm_1)$ to $W(lm_1)$. Therefore, we have $Ex = Fx =0$, and $E V_0 = F V_0 = 0$.  

Now consider $V_k = F^{(l)} V_{k-1} + E^{(l)} V_{k-1}$. 
In $U_q(\lie{sl}_2)$ we have (\cite{Lus89}, 4.1): 
\[  F E^{(l)}  = E^{(l)} F - \frac{-Kq + K^{-1} q^{-1}}{q-q^{-1}} E^{(l-1)} .\] 
\[  E F^{(l)}  = F^{(l)}E + F^{(l-1)} \frac{-Kq + K^{-1} q^{-1}}{q-q^{-1}} .\] 
Let $D$ denote $E$ or $F$. Then 
\[ D V_k \subset E^{(l)} D V_{k-1} + F^{(l)} D V_{k-1} + U_q E^{l-1} V_{k-1} + U_q F^{l-1} V_{k-1} .\] 
By the induction hypothesis, we have $DV_{k-1} =0, E^{l-1}V_{k-1} = 0, F^{l-1} V_{k-1} =0$, and this completes the induction step. We have $K y = q^{lm_1 \pm 2lk} y = (-1)^{m_1} y$ for any $y \in V_k$. Hence $E, F, K- (-1)^{m_1}$ act as zero on 
$V =\sum_{k} V_k$.  

By construction $E^{(l)} V \subset V$, $F^{(l)} V \subset V$, hence $V$ is a $U_q(\lie{sl}_2)$ submodule of $L(m)$, and since $L(m)$ is simple, we have $V = L(m)$.
\end{proof} 

Let $m = m_1 l + m_0$ such that $0 \leq m_0 \leq l-1$ and $m_1  \in \mathbb{Z}_{\geq 0}$.  Using the Hopf algebra structure of $U_q(\lie{sl}_2)$, we can consider the tensor product 
of simple $U_q(\lie{sl}_2)$ modules $L(m_0) \otimes L(lm_1)$.  By the Proposition \ref{L(lm)} above, we have 
\[ E^{(l)} (x_0 \otimes x_1) = E^{(l)} x_0 \otimes x_1  + K^l x_0 \otimes E^{(l)} x_1 , \] 
\[ F^{(l)} (x_0 \otimes x_1) = F^{(l)} x_0 \otimes K^{-l} x_1  + x_0 \otimes F^{(l)} x_1 . \] 

\begin{proposition} \label{tensor_pr} 
The following $U_q(\lie{sl}_2)$  modules are isomorphic: 
\[ L(lm_1 + m_0)  \cong L(m_0)  \otimes L(lm_1) . \] 
\end{proposition}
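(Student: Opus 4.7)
The plan is to exhibit a cyclic highest weight vector $x = x_0 \otimes x_1$ in the tensor product, use the universal property of the Weyl module to produce a nonzero homomorphism $\varphi: W(lm_1+m_0) \to L(m_0) \otimes L(lm_1)$, show it is surjective, and conclude by a dimension count that $\varphi$ factors through the simple quotient $L(lm_1+m_0)$. Here $x_0 \in L(m_0)$ is the vector from Proposition \ref{Lm0}, annihilated by $E$ and $E^{(l)}$ with $K x_0 = q^{m_0} x_0$; and $x_1 \in L(lm_1)$ is the vector from Proposition \ref{L(lm)}, with $E x_1 = E^{(l)} x_1 = 0$ and $K x_1 = (-1)^{m_1} x_1 = q^{l m_1} x_1$.

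First I verify that $x$ is a highest weight vector of weight $l m_1 + m_0$ in the sense of Section \ref{modules}. The identities $E x = E^{(l)} x = 0$ follow from $\Delta E = E \otimes 1 + K \otimes E$ and the formula for $E^{(l)}$ displayed just before the statement of Proposition \ref{tensor_pr}, since $E^{(l)}$ annihilates both $x_0$ and $x_1$. The $K$-eigenvalue on $x$ is $q^{m_0} \cdot q^{l m_1} = q^{l m_1 + m_0}$. For the eigenvalue of $\qbin{K;0}{l}$ I pass to the integral form $U_{\A}(\lie{sl}_2)$ where $v$ is formal: since $\Delta K = K \otimes K$ is group-like, lifting $x_0, x_1$ to weight vectors over $\A$ and applying $\Delta \qbin{K;0}{l}$ factor by factor shows that it acts on a tensor of weight vectors of weights $a, b$ as multiplication by the $\A$-element $\qbin{a+b}{l}$; specializing $v \to q$ with $a = m_0$, $b = l m_1$ gives $\qbin{K;0}{l} \cdot x = \qbin{l m_1 + m_0}{l} x$. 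These computations produce a $U_q(\lie{sl}_2)$-homomorphism $\varphi: W(l m_1 + m_0) \to L(m_0) \otimes L(lm_1)$ sending the canonical generator of $W(l m_1 + m_0)$ to $x$.

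Next I show $\mathrm{Im}(\varphi) = U_q(\lie{sl}_2) \cdot x$ equals the entire tensor product. By Proposition \ref{L(lm)}(a), $E$ and $F$ act as zero on $L(lm_1)$, and by Proposition \ref{Lm0}(c), $E^{(l)}$ and $F^{(l)}$ act as zero on $L(m_0)$. Using the coproduct formula $\Delta F^{(l)} = \sum_{a+b=l} v^{-ab} F^{(a)} \otimes K^{-a} F^{(b)}$, valid in $U_\A$, every intermediate term with $0 < b < l$ kills any vector in $L(m_0) \otimes L(lm_1)$, so
\[
F^{(l)}(y \otimes z) = F^{(l)} y \otimes K^{-l} z + y \otimes F^{(l)} z
\]
for any $y \in L(m_0)$, $z \in L(lm_1)$. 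With $y = x_0$ the first summand vanishes, and iteration gives $(F^{(l)})^t (x_0 \otimes x_1) = x_0 \otimes (F^{(l)})^t x_1$ for $0 \leq t \leq m_1$, producing a basis of $x_0 \otimes L(lm_1)$. Since $F z = 0$ throughout $L(lm_1)$, one also has $F(y \otimes z) = F y \otimes K^{-1} z$, and repeated application of $F$ walks the first tensor factor down the basis $\{F^{(s)} x_0 : 0 \leq s \leq m_0\}$ of $L(m_0)$. Combined, these operations reach every basis vector $F^{(s)} x_0 \otimes (F^{(l)})^t x_1$ of $L(m_0) \otimes L(lm_1)$, establishing cyclicity.

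Finally, an induction on $m_1$ using the short exact sequence of Theorem \ref{Weyl}(b) yields $\dim L(lm_1 + m_0) = (m_0+1)(m_1+1)$, which matches $\dim L(m_0) \cdot \dim L(lm_1) = (m_0+1)(m_1+1)$. Since the only nonzero quotients of $W(lm_1+m_0)$ are $W(lm_1+m_0)$ itself and $L(lm_1+m_0)$, and since $\dim W(lm_1+m_0) = lm_1+m_0+1$ exceeds $(m_0+1)(m_1+1)$ except when $m_1 = 0$ or $m_0 = l-1$ (in which degenerate cases $W(lm_1+m_0) \cong L(lm_1+m_0)$ already by Theorem \ref{isomLW}), $\varphi$ must factor through $L(lm_1+m_0)$, yielding the required isomorphism. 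I expect the main obstacle to be the careful computation of the $\qbin{K;0}{l}$-eigenvalue on $x$, which must be carried out in the $\A$-form to avoid the apparent $0/0$ indeterminacy at $s = l$ caused by $q^l - q^{-l} = 0$.
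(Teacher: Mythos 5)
Your proof is correct and shares the paper's core strategy: exhibit the primitive vector $x_0 \otimes x_1$ of weight $lm_1+m_0$ in the tensor product and show cyclicity, but you diverge from the paper in two places. For the eigenvalue of $\qbin{K;0}{l}$, you pass to the $\A$-form, where $\Delta K = K\otimes K$ makes the answer $\qbin{m_0+lm_1}{l}$ immediate on a tensor of weight vectors; the paper instead applies the commutator identity $E^{(l)}F^{(l)}-F^{(l)}E^{(l)}=\sum_{0\le t\le l}F^{(l-t)}\qbin{K;2t-2l}{t}E^{(l-t)}$ to the primitive vector (so that only the $t=l$ term survives, computing $\qbin{K;0}{l}$ on the tensor), evaluates the commutator through the second factor, and then invokes the arithmetic identity $(-1)^{m_0}\qbin{lm_1}{l}=\qbin{lm_1+m_0}{l}$ from Corollary \ref{q-bin3}. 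Your route avoids the sign bookkeeping at the modest cost of a lift to $W_\A(m_0)\otimes W_\A(lm_1)$. More substantively, your conclusion is a dimension count: you establish $\dim L(lm_1+m_0)=(m_0+1)(m_1+1)=\dim\bigl(L(m_0)\otimes L(lm_1)\bigr)$ from the Weyl-module short exact sequences and force $\varphi$ to factor through the unique proper nonzero quotient of $W(lm_1+m_0)$; the paper instead proves simplicity of $L(m_0)\otimes L(lm_1)$ directly by showing any primitive vector in a submodule is a scalar multiple of $z_0\otimes z_1$, then concludes from the universal property of the Verma quotient. The paper's argument yields simplicity as output; yours is shorter once the dimension formula is in hand but leans on knowing that $W(lm_1+m_0)$ has exactly one proper nonzero submodule. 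One small step you elide: to obtain a map from the Weyl module $W(lm_1+m_0)$ rather than merely from $Y(lm_1+m_0)$, you should note that $F^{(t)}(x_0\otimes x_1)=0$ for $t>lm_1+m_0$ — immediate, since such a vector would lie below the lowest weight $-lm_1-m_0$ of the finite-dimensional tensor product — so the universal map from $Y(lm_1+m_0)$ indeed factors through $W(lm_1+m_0)$.
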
 
\begin{proof} 
Let $z_0 \in L(m_0)$ be such that $E z_0 =0$, $K z_0 = q^{m_0} z_0$, and $z_1 \in L(lm_1)$ such that $E^{(l)} z_1 =0$ 
and $\qbin{K ; 0}{l} z_1 = \qbin{lm_1}{l}$. These vectors are unique up to a scalar, as they are obtained as the image of 
$x \in W(k)$ under the projection $W(k) \to L(k)$ for  $k=m_0$ and $k = lm_1$ respectively. 
Then 
\[ E(z_0 \otimes z_1) = E (z_0) \otimes z_1 = 0 , \quad  E^{(l)} (z_0 \otimes z_1) = 
K^l z_0 \otimes E^{(l)}  z_1 =0 ,\] 
so that $(z_0 \otimes z_1)$ is a primitive vector in $L(m_0) \otimes L(lm_1)$. 
Then applying $E^{(l)} F^{(l)} - F^{(l)} E^{(l)} = \sum\limits_{0 \leq t \leq l} F^{(l-t)} \qbin{K ; 2t-2l}{t} E^{(l-t)}$ (\cite{Lus89}, 4.1),  to the primitive vector $(z_0 \otimes z_1)$, we obtain 
\[ \qbin{K ; 0 }{l} (z_0 \otimes z_1) = K^l (z_0) \otimes (E^{(l)} F^{(l)} - F^{(l)} E^{(l)}) z_1 = \] 
\[ = (-1)^{m_0} z_0 \otimes \qbin{K ; 0}{l} (z_1) = (-1)^{m_0} \qbin{lm_1}{l} (z_0 \otimes z_1) = \qbin{lm_1 + m_0}{l} (z_0 \otimes z_1) .\] 
In the last equality we used the identity \ref{q-bin3}
We also have $K(z_0 \otimes z_1) =  q^{lm_1+m_0} (z_0 \otimes z_1)$. This implies that the vector $(z_0 \otimes z_1)$ has the same properties as a primitive vector of the simple module $L(lm_1 + m_0)$. 

Let us prove that $L(m_0) \otimes L(lm_1)$ is generated by $(z_0 \otimes z_1)$. Let $x_0 \otimes x_1 \in L(m_0) \otimes L(m_1)$, Since $z_0$ and $z_1$ generate $L(m_0)$ and $L(lm_1)$ over $u_q$ and $\mathcal {U}$ respectively, there exist elements $f \in u_q$ and $\mathcal{F} \in \mathcal{U}$ such that $x_0 = f z_0$ and $x_1 = \mathcal{F} z_1$. Then we have using Propositions \ref{Lm0}, \ref{L(lm)} 
\[ f \mathcal{F} (z_0 \otimes z_1) = f(\pm z_0 \otimes \mathcal{F} z_1) = \pm f(z_0 \otimes x_1) = \pm f(z_0) \otimes x_1 = \pm x_0 \otimes x_1 .\] 
So $L(m_0) \otimes L(lm_1)$ is  generated by $z_0 \otimes z_1$ of weight $lm_1+ m_0$. 

Suppose that there exists a nontrivial proper submodule $M \subset L(m_0) \otimes L(lm_1)$, and let $w \in M$ be primitive.  Let $\{v_j\}_{j=1}^r$ be a basis over $\CC$ of $L(lm_1)$. Then we can write uniquely 
$w = \sum_{j=1}^r w_j \otimes v_j$, where $w_j \in L(m_0)$. Since the vector $w$ is primitive in $M$, we have 
$Ew = \sum_{j=1}^r E w_j \otimes v_j =0$, hence $Ew_j =0$ for all $j$, therefore $w_j$ are multiples of $z_0 \in L(m_0)$, and $w = z_0 \otimes v$ for some $v \in L(lm_1)$. We also have 
$E^{(l)} w = z_0 \otimes E^{(l)} v =0$, and therefore $v$ is a multiple of $z_1 \subset L(lm_1)$. Therefore, any primitive vector in $M$ is a multiple of $z_0 \otimes z_1$, and this implies that $L(m_0) \otimes L(lm_1)$ is simple, and that the primitive vector in it has the weight $lm_1 + m_0$. Any such module is a simple quotient of $W(lm_1 + m_0)$ and therefore it is isomorphic to $L(lm_1+ m_0)$.
\end{proof}  

\begin{theorem} \label{tilt_prod}
For any $0 \leq k_0 \leq l-1$ and $k_1 \in \mathbb{Z}_{\geq 0}$ we have the tensor product identity for the indecomposable 
tilting modules over $U_q({\lie sl}_2)$: 
\[ T(k_1 l + 2(l-1) - k_0) \cong L(lk_1)  \otimes T(2(l-1) -k_0) .\] 
\end{theorem}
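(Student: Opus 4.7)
The statement is a version of Donkin's tensor product theorem for indecomposable tilting modules of $U_q(\lie{sl}_2)$ at a root of unity. My plan is to construct an explicit Weyl filtration $0\subset M\subset L(lk_1)\otimes T(2(l-1)-k_0)$ whose successive quotients are $W((k_1+1)l+(l-2-k_0))$ (as submodule) and $W(k_1l+k_0)$ (as quotient), deduce a dual Weyl filtration from self-duality, conclude that the tensor product is a tilting module, and match it with $T(k_1l+2(l-1)-k_0)$ by the uniqueness part of Theorem \ref{tilt_prop1}(a). The case $k_1=0$ is trivial since $L(0)\cong\CC$, so I assume $k_1\geq 1$ throughout.

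For the submodule $M$, let $z_1\in L(lk_1)$ be the generator from Proposition \ref{L(lm)} (with $Ez_1=Fz_1=0$, $E^{(l)}z_1=0$, $Kz_1=(-1)^{k_1}z_1$), and let $w\in T(2(l-1)-k_0)$ be the highest weight vector. By Theorem \ref{tilt_prop2}, $w$ lies in the Weyl submodule $W(2(l-1)-k_0)\subset T(2(l-1)-k_0)$ and hence satisfies $Ew=E^{(l)}w=0$. Using the coproducts $\Delta E=E\otimes 1+K\otimes E$ and $\Delta E^{(l)}=\sum_{i+j=l}E^{(i)}K^{j}\otimes E^{(j)}$ together with $E^{(i)}z_1=0$ for all $1\leq i\leq l$, one checks that $z_1\otimes w$ is a primitive vector of weight $(k_1+1)l+(l-2-k_0)=k_1l+2(l-1)-k_0$. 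Let $M$ be the $U_q$-submodule it generates; by the universal property of Weyl modules, $M$ is a quotient of $W((k_1+1)l+(l-2-k_0))$. The socle of this Weyl module is $L(k_1l+k_0)$, generated by the primitive vector $F^{(l-1-k_0)}x$ of weight $k_1l+k_0$. In $\Delta F^{(l-1-k_0)}(z_1\otimes w)$ only the $i=0$ term survives, since $Fz_1=0$ forces $F^{(i)}z_1=0$ for $0<i<l$; the image is $z_1\otimes F^{(l-1-k_0)}w\neq 0$. Hence the kernel of $W((k_1+1)l+(l-2-k_0))\twoheadrightarrow M$ does not contain the socle and so is zero, proving $M\cong W((k_1+1)l+(l-2-k_0))$.

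For the quotient $N=(L(lk_1)\otimes T(2(l-1)-k_0))/M$, dimensions give $\dim N=k_1l+k_0+1=\dim W(k_1l+k_0)$, and the weight-multiplicity computation for $L(lk_1)\otimes T(2(l-1)-k_0)$ minus the weights of $M$ shows that the highest weight of $N$ is $k_1l+k_0$ with multiplicity $1$; any vector realizing it is automatically primitive, since $E$ and $E^{(l)}$ raise it to weights exceeding those present in $L(lk_1)\otimes T(2(l-1)-k_0)$. Hence $N$ is a quotient of $W(k_1l+k_0)$. To upgrade this to an isomorphism I need to know that $N$ is actually generated by its top weight vector. This I would obtain from the complementary short exact sequence: tensoring $0\to W(2(l-1)-k_0)\to T(2(l-1)-k_0)\to W(k_0)\to 0$ with $L(lk_1)$ and using Proposition \ref{tensor_pr} to identify $L(lk_1)\otimes L(k_0)\cong L(lk_1+k_0)$ produces a surjection $L(lk_1)\otimes T(2(l-1)-k_0)\twoheadrightarrow L(lk_1+k_0)$ which factors through $N$; this pins the cosocle of $N$ to $L(lk_1+k_0)$, rules out the possibility that $N$ is an extension opposite to the Weyl module or a semisimple sum of $L(lk_1+k_0)$ and $L((k_1-1)l+(l-2-k_0))$, and forces $N\cong W(k_1l+k_0)$ by dimension match.

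With the Weyl filtration in hand, a dual Weyl filtration follows by dualization: $L(lk_1)$ is self-dual (as a simple module with symmetric weight set), $T(2(l-1)-k_0)$ is self-dual by Theorem \ref{tilt_prop2}, and the braiding on the category makes $L(lk_1)\otimes T(2(l-1)-k_0)$ self-dual, so dualizing the Weyl filtration yields a dual Weyl filtration. Hence the tensor product is tilting; since its highest weight $k_1l+2(l-1)-k_0$ occurs with multiplicity $1$, the uniqueness clause of Theorem \ref{tilt_prop1}(a) together with a dimension match identifies it with $T(k_1l+2(l-1)-k_0)$. The main obstacle is the cosocle analysis for $N$: a priori $N$ could be a nontrivial extension opposite to the Weyl module, or a semisimple direct sum of its two composition factors, and it is the complementary Weyl filtration combined with Proposition \ref{tensor_pr} that pins $N$ down as $W(k_1l+k_0)$.
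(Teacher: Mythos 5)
Your approach is genuinely different from the paper's. The paper's proof is short and categorical: it identifies the socle of both sides (using Theorem~\ref{tilt_prop2} and Proposition~\ref{tensor_pr}), invokes injectivity of tilting modules (following \cite{APW1}, \cite{APW2}) to conclude that $T(k_1 l + 2(l-1) - k_0)$ is an injective hull of that common socle and hence a direct summand of $L(lk_1)\otimes T(2(l-1)-k_0)$, then finishes by a dimension count. You instead build an explicit two-step Weyl filtration and deduce the dual filtration by self-duality. Both strategies are standard ways of proving Donkin-type tensor identities; the paper's is shorter because it outsources the hard work to injectivity, yours is more elementary but must carry more of the module theory by hand.

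There is, however, a real gap where you flag it yourself, in pinning down the quotient $N=(L(lk_1)\otimes T(2(l-1)-k_0))/M$. You correctly construct a surjection $N\twoheadrightarrow L(lk_1+k_0)$ and correctly observe that this rules out $N\cong W^*(k_1l+k_0)$, since $W^*$ has cosocle $L(lk_1-2-k_0)$ and admits no surjection onto $L(lk_1+k_0)$. But a surjection onto $L(lk_1+k_0)$ only shows that $L(lk_1+k_0)$ appears in the cosocle of $N$; it does not ``pin'' the cosocle, i.e.\ it does not exclude the semisimple possibility $N\cong L(lk_1+k_0)\oplus L(lk_1-2-k_0)$, which is the genuinely dangerous case. (Your earlier observation that the top weight vector of $N$ is automatically primitive is also true in both $W^*$ and the semisimple module, so it gives no traction either.) To close this you need something like $\mathrm{Hom}\bigl(L(lk_1)\otimes T(2(l-1)-k_0),\,L(lk_1-2-k_0)\bigr)=0$, which can be obtained by adjunction together with Proposition~\ref{tensor_pr} and the fact that the cosocle of $T(2(l-1)-k_0)$ is $L(k_0)$; since $l$ is odd and $0\le k_0\le l-2$, none of the summands $L(lj+l-2-k_0)$ appearing in $L(lk_1)\otimes L(lk_1-2-k_0)$ can equal $L(k_0)$. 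With that extra computation the filtration argument goes through. A smaller remark: your derivation of the dual Weyl filtration from self-duality relies on commuting the tensor factors, $T\otimes L\cong L\otimes T$; this is fine (one can cite quasitriangularity of $U_q$, or the fact that $E,F$ act trivially on $L(lk_1)$ so the factors commute up to the $K^l$-twist), but as written it is left implicit, whereas the paper avoids the issue entirely by never needing to construct the dual Weyl filtration.
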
 
\begin{proof} 
Using the filtrations of the tilting and Weyl modules in Theorems \ref{Weyl} \ref{tilt_prop2}, we conclude that $L(lk_1 +k_0)$ is the socle of the tilting module $T(lk_1 + 2(l-1) -k_0)$, and $L(lk_1) \otimes L(k_0)$ is the socle of the module 
$L(lk_1) \otimes T(2(l-1) - k_0)$. By the tensor product theorem \ref{tensor_pr}, these two simple modules are isomorphic. 
By a standard argument following \cite{APW1}, Section 9, \cite{APW2}, 4.6,  the tilting modules in both sides are injective; in particular, the module $T(l k_1+ 2(l-1) -k_0)$ is an injective hull of the simple module $L(lk_1 + k_0)$, and therefore it has to be a direct summand of the module  $L(lk_1) \otimes T(2(l-1) - k_0)$. Comparing the dimensions gives the isomorphism (we use that the dimensions of the Weyl modules in the filtration of tilting modules are given by the Weyl dimension formula). 
\end{proof}

\section{Decomposition of tensor powers of $T(1)$ over $U_q(\lie{sl}_2)$} \label{tp}
We are interested in the tensor powers of the module $L(1)$ over the quantum group $U_q(\lie{sl}_2)$, given explicitly by the following action of the generators: 
\[ F v_1  = v_{-1} \quad \quad F v_{-1} =0 , \quad \quad Ev_1= 0, \quad \quad Ev_{-1} =   v_1 .\] 
\[ Kv_1 =  q v_1, \quad \quad  Kv_{-1} = q^{-1} v_{-1} .\] 
\[ E^{(l)} v_i =0, \quad \quad F^{(l)} v_i =0, \quad \quad i=1, -1 .\] 
Since $L(1) \simeq T(1)$ is a tilting module, its tensor powers decompose into a direct sum of indecomposable tilting modules over $U_q(\lie{sl}_2)$.

Let $V(n)$ be the simple module of highest weight $n \in \mathbb{Z}_{\geq 0}$ over the Lie algebra $\lie{sl}(2, \CC)$.  Define the coefficients $F_k^{(N)}$ as multiplicities in the following decomposition: 
\begin{equation} \label{sl2} 
 V(1)^{\otimes N} = \bigoplus_{\substack{0 \leq k \leq N\\ k \equiv N\,{\rm mod}\,{2}}}  \; V(k)^{\oplus F_k^{(N)}}  \; . 
 \end{equation} 
i.e. $F_k^{(N)}=\dim\mathrm{Hom}(V(1)^{\otimes N}, V(k))$. It is known that  $F_k^{(N)}$ are 
\begin{equation} \label{tkN} 
F_k^{(N)} = {N\choose{\frac{N+k}{2}}} \frac{2k+2}{N+k+2}= {N\choose{\frac{N-k}{2}}}-{N\choose{\frac{N-k-2}{2}}} .
\end{equation}

The following theorem provides the mutliplicities of $T(m)$ in $(L(1))^{\otimes N}$ over the algebra $U_q(\lie{sl}_2)$. 
\begin{theorem} see also \cite{And17}, Corollary 3.5. \label{comb_coef}
 Using the classical coefficients $F_k^{(N)}$ defined in (\ref{tkN}),  we have  
\[ T(1)^{\otimes N} =  \bigoplus_{\substack{0 \leq k \leq N\\ k \equiv N\,{\rm mod}\,{2}}}  \; T(k)^{\oplus M_{k}^{(l)}(N)}  \; , \] 
where
\begin{enumerate} 
\item If $k = k_1 l -1 \leq N$ for $k_1 \geq 1$,  then  $M_{k_1 l -1}^{(l)}(N) = F_{k_1l-1}^{(N)}$,  
\item  If  $k = k_1 l + k_0 \leq N$ with $k_1 \geq 0$ and $0 \leq k_0 \leq l-2$, then 
\begin{eqnarray}\label{p-numbers}
M_{k_1 l +k_0}^{(l)}(N) =   
 \sum\limits_{m=0}^{\lfloor\frac{N-k}{2l}\rfloor} F_{(k_1+2m)l + k_0}^{(N)}-  \sum\limits_{n=0}^{\lfloor\frac{N-(k_1+ 2)l +k_0+2}{2l}\rfloor} F_{(k_1+2n +2)l -k_0-2}^{(N)} .  \nonumber
\end{eqnarray}
Here $\lfloor a\rfloor$ denotes the integer part (floor) of $a$. If the upper limit for $n$ is negative, the sum is void.   
\end{enumerate} 
\end{theorem}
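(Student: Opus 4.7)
The plan is to compute both sides of the identity at the level of characters, where the tilting module characters are explicitly given by Corollary~\ref{tilt_char} in terms of Weyl characters. Since $T(1) \cong L(1)$ has the same weight spaces as the classical $\lie{sl}_2$-module $V(1)$, all tensor powers $T(1)^{\otimes N}$ have the same character (as a Laurent polynomial in $x$, where $K$ acts as $x^{\text{weight}}$) as $V(1)^{\otimes N}$. The classical decomposition \eqref{sl2} together with the fact that $\mathrm{ch}_{W(k)} = \mathrm{ch}_{V(k)}$ gives
\[
\mathrm{ch}_{T(1)^{\otimes N}} \;=\; \sum_{\substack{0\leq k\leq N\\ k\equiv N(\mathrm{mod}\,2)}} F_k^{(N)}\,\mathrm{ch}_{W(k)}.
\]
On the other hand, applying Corollary~\ref{tilt_char} to expand each $\mathrm{ch}_{T(k)}$ gives a second expression for the same character in the Weyl basis, and since $\{\mathrm{ch}_{W(k)}\}_{k\geq 0}$ are linearly independent (they have pairwise distinct top degrees), coefficients of $\mathrm{ch}_{W(k)}$ on the two sides must agree.

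Next I would extract the resulting identities case by case. When $k = k_1 l - 1$ (equivalently $k_0 = l-1$), Corollary~\ref{tilt_char} tells us $\mathrm{ch}_{W(k)}$ appears only in $\mathrm{ch}_{T(k)}$ and nowhere else on the right-hand side, yielding immediately $M_{k_1 l - 1}^{(l)}(N) = F_{k_1 l -1}^{(N)}$, which is part (a). When instead $k = k_1 l + k_0$ with $0\leq k_0\leq l-2$ and $k_1 \geq 0$, the character $\mathrm{ch}_{W(k)}$ appears on the right from two sources: once as the ``large'' Weyl factor of $\mathrm{ch}_{T(k)}$, and once as the ``small'' Weyl factor of $\mathrm{ch}_{T(k')}$ where $k' = (k_1+1)l + (l-2-k_0)$, since the decomposition $(k_1+1)l - 2 - (l-2-k_0) = k_1 l + k_0$ matches the second line of the corollary. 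This gives the fundamental two-term recursion
\[
F_k^{(N)} \;=\; M_k^{(l)}(N) + M_{(k_1+1)l + (l-2-k_0)}^{(l)}(N).
\]

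The final step is to unfold this recursion. Setting $k = k_1 l + k_0$ and applying the relation to the second term repeatedly (noting that the index alternates between the form $(k_1 + 2m) l + k_0$ and $(k_1 + 2m+1) l + (l-2-k_0) = (k_1 + 2m + 2)l - 2 - k_0$, so each pair of steps increases $k_1$ by $2$), one obtains the alternating sum
\[
M_k^{(l)}(N) \;=\; \sum_{m\geq 0} F_{(k_1+2m)l+k_0}^{(N)} \;-\; \sum_{n\geq 0} F_{(k_1+2n+2)l-k_0-2}^{(N)}.
\]
The recursion terminates because $F_j^{(N)}=0$ for $j > N$; the resulting upper summation limits are exactly $\left[\frac{N-k}{2l}\right]$ and $\left[\frac{N-(k_1+2)l+k_0+2}{2l}\right]$ quoted in part (b).

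The main bookkeeping to take care of is verifying the parity/index conditions for termination (checking that the telescoped recursion closes rather than spawns an additional boundary term at the top), and confirming that the correspondence $k\mapsto k'$ preserves the constraint $0\leq k_0'\leq l-2$ at every stage so that Corollary~\ref{tilt_char} keeps applying in the same case. Aside from this, the argument is purely linear-algebraic: once character linear independence and Corollary~\ref{tilt_char} are in hand, everything reduces to equating coefficients and iterating.
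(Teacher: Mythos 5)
Your argument is correct and is essentially the same as the paper's: both compute $\mathrm{ch}_{T(1)^{\otimes N}}$ in the Weyl-character basis via the classical $\lie{sl}_2$ decomposition, and then invert the base change of Corollary~\ref{tilt_char} to read off tilting multiplicities. The only cosmetic difference is presentation: the paper organizes the Weyl characters into linkage-orbit blocks $Q_{N,k_0}$ and writes down the bidiagonal change-of-basis matrix $A$ together with its inverse $A^{-1}$, whereas you derive the same content as the two-term recursion $F_k^{(N)} = M_k^{(l)}(N) + M_{(k_1+1)l+(l-2-k_0)}^{(l)}(N)$ and unfold it; these are identical computations. One small point worth tightening: the iteration $M_{k^{(0)}} = \sum_{j=0}^{J}(-1)^j F_{k^{(j)}} + (-1)^{J+1}M_{k^{(J+1)}}$ closes because $M_{k^{(J+1)}}^{(l)}(N)=0$ once $k^{(J+1)}>N$ (no tilting module of highest weight $>N$ can occur in $T(1)^{\otimes N}$), not merely because $F_j^{(N)}=0$ for $j>N$ — though of course both vanishings happen in the same range.
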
 
\begin{proof} 
By Theorem \ref{tilt_prop1}, the tensor power of $T(1)  = L(1)$ is a tilting module whose decomposition into the indecomposable tilting modules is determined by its character. Denote by $M_{k}^{(l)}(N)$ the multiplicity of the tilting module $T(k)$ in the decomposition of the tensor power $T(1)^{\otimes N}$ over $U_q(\lie{sl}_2)$. 

Note that the formula \ref{sl2} provides the coefficients $F_k^{(N)}$ of the decomposition of the character of $T(1)^{\otimes N}$ in terms of the characters of the Weyl modules over $U_q(\lie{sl}_2)$. Therefore our task reduces to rewriting this decomposition in the basis of the tilting characters. 

The decomposition of the basis of tilting characters in terms of the Weyl characters is given by Corollary  \ref{tilt_char}. Consider the vector space $Q_N$ spanned by the characters $\{ {\rm ch}_{W(k)} \}_{0 \leq k \leq N}$.  
Then the subspace $Q_{N, l-1} \subset Q_N$ spanned by $\{{\rm ch}_{W(k_1l-1)}\}_{0 < k_1l-1 \leq N}$ coincides with the subspace spanned by $\{{\rm ch}_{T(k_1l-1)}\}_{0 < k_1l-1 \leq N}$, 
and we have ${\rm ch}_{W(k_1l -1)} = {\rm ch}_{T(k_1l -1)}$, so that the change of basis matrix on $Q_{N, l-1}$ is trivial. This proves (a). 
 
The subspace $Q_{\rm tilt}$  of $Q_N= Q_{\rm tilt} \oplus Q_{N, l-1}$ spanned by the characters $\{{\rm ch}_{W(k_1l + k_0)} \}_{\substack{0 \leq k_0 \leq l-2,\\ k_1l + k_0 \leq N}}$ splits into a direct sum of subspaces 
\[ Q_{\rm tilt} =  \bigoplus_{0\leq k_0 \leq l-2} \; Q_{N, k_0} \] 
spanned by the Weyl characters of dominant highest weights in the orbit of $k_0$ with respect to the reflections about the weights of the form $\{ jl-1\}_{ j \in \Z}$. This follows from the formula for the tilting character ${\rm ch}_{T(lk_1 + k_0)}$, $0 \leq k_0 \leq l-2$ in Corollary \ref{tilt_char} and is a manifestation of the {\it linkage principle} (cf. \cite{AT}, 2.4). Let us fix $k_0$ such that  $0  \leq k_0 \leq l-2$. Then the weights of the Weyl (or tilting) characters spanning $Q_{N, k_0}$ are the following: 
\[  \begin{array}{ll} 
 \{ \lambda_{k_1} =   k_1 l + k_0 \}_{0 \leq k_1l + k_0 \leq N},   &    k_1   \; {\rm even}  \\
 \{ \lambda_{k_1} =  k_1 l + l-j_0 -2\}_{ 0 \leq  k_1 l + l - k_0 -2 \leq N}    &  k_1  \; {\rm odd} 
   \end{array}  \] 
   Let $k_{\rm max}$ denote the maximal value of $k_1$ satisfying the above conditions. Then  the Weyl characters that span $Q_{N, k_0}$ are enumerated by all integers $k_1$ such that $0 \leq k_1 \leq k_{\rm max}$, namely they are  
    $\{{\rm ch}_{W(\lambda_{k_1})} \}_{0 \leq k_1 \leq k_{\rm max}}$. 
The same space is also spanned by the characters  $\{ {\rm ch}_{T(\lambda_{k_1})} \}_{0 \leq k_1 \leq k_{\rm max}}$, and we can rewrite Corollary \ref{tilt_char} as follows:  
 \[\begin{array}{l} 
 {\rm ch}_{T(\lambda_0)} = {\rm ch}_{W(\lambda_0)} , \\ 
  {\rm ch}_{T(\lambda_{k_1})} = {\rm ch}_{W(\lambda_{k_1})} + {\rm ch}_{W(\lambda_{k_1-1})}   , \;\;\;\;  1 \leq k_1 \leq k_{\rm max}  . 
  \end{array}  \] 
  This defines the change of basis matrix between the tilting and Weyl characters and its inverse: 
  \[ \begin{array}{lll} 
  A = \left( \begin{array}{ccccc} 
              1 & 1 & 0 & 0 & 0 \\ 
                 & 1 & 1 & 0 & 0 \\ 
               \ldots  &   \ldots  &  \ldots & \ldots   & \ldots    \\
                 &    &    &  1  & 1  \\
                  &    &   &     &  1 \\    
                  \end{array} \right)  ,    &  \;\;\;\;\;   &
     A^{-1} = \left( \begin{array}{ccccc} 
              1 & -1 & 1 & -1 & 1 \\ 
                 & 1 & -1 & 1 & -1 \\ 
          \ldots  &   \ldots  &  \ldots & \ldots   & \ldots    \\
                 &    &    &  1  & -1  \\
                  &    &   &     &  1 \\    
                  \end{array} \right)     
                  \end{array} \] 
Applying the matrix $A^{-1}$ to the column of classical coefficients $\{ t_{k, N}\}$, we obtain the  formula given in part (b) of the theorem.                           
\end{proof}

\section{Restriction from  $U_q(\lie{sl}_2)$ to $u_q(\lie{sl}_2)$}\label{restUu}
Let $U_q^0 \subset U_q({\lie sl}_2)$ denote the subalgebra of $U_q({\lie sl}_2)$ generated by $K^{\pm 1}$, $\qbin{K ; c}{t}$ for  $t \geq 0, c \in \Z$. Let $u_q^{\pm} \subset u_q$ denote the subalgebras of $u_q$ generated by $F$ and $E$ respectively.  Using Theorem \ref{tilt_prod}, we can obtain the following result for the restriction of the tilting modules over the algebra 
$u_q^- U_q^0 u_q^+$: 
\begin{corollary}\label{restr} 
Restricting Theorem  \ref{comb_coef} over the algebra $u_q^- U_q^0 u_q^+$, we obtain after a change of variables 
\[  T(k_1 l + (l-1) + k_0) \big\vert_{u_q^- U_q^0 u_q^+} \cong \bigoplus_{\substack{-k_1 \leq j \leq k_1\\  j\equiv k_1 \,{\rm mod}\, 2}} \CC[lj]  \otimes  T((l-1) +k_0) .\] 
Here $k_1 \in \mathbb{Z}_{\geq 0}$, $0 \leq k_0 \leq l-1$, and $\CC[lj]$ is the one-dimensional module of weight $lj$. 
\end{corollary}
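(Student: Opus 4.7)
The plan is to combine Theorem \ref{tilt_prod} with an analysis of the restriction of $L(lk_1)$ to $u_q^- U_q^0 u_q^+$. First I would rewrite Theorem \ref{tilt_prod} under the change of variables $k_0 \mapsto l-1-k_0$, giving the $U_q(\lie{sl}_2)$-isomorphism
\[ T(k_1 l + (l-1) + k_0) \cong L(lk_1) \otimes T((l-1) + k_0) \]
for $1 \leq k_0 \leq l-1$ and $k_1 \geq 0$. The boundary case $k_0 = 0$ is not directly covered by Theorem \ref{tilt_prod}, but $T(l-1) = L(l-1)$ by Theorem \ref{isomLW}(a), so Proposition \ref{tensor_pr} gives $L(lk_1) \otimes L(l-1) \cong L((k_1+1)l-1) = T((k_1+1)l-1) = T(k_1 l + (l-1))$, extending the isomorphism to the full range $0 \leq k_0 \leq l-1$.

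Since the above is an isomorphism of $U_q(\lie{sl}_2)$-modules, it restricts to an isomorphism over the subalgebra $u_q^- U_q^0 u_q^+$, and the problem reduces to describing $L(lk_1)$ as a module over this subalgebra. By Proposition \ref{L(lm)}, $E$ and $F$ act as zero on $L(lk_1)$, so the action of $u_q^{\pm}$ is trivial. Consequently $L(lk_1)\big\vert_{u_q^- U_q^0 u_q^+}$ is completely determined by the action of the commutative subalgebra $U_q^0$, and thus decomposes into a direct sum of one-dimensional weight modules $\CC[\mu]$.

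To identify the weights it suffices to note that $L(lk_1)$ is generated from a vector of weight $lk_1$ by $\mathcal{U} = \langle E^{(l)}, F^{(l)} \rangle$, which shifts weights by $\pm 2l$, so all weights lie in $lk_1 + 2l \Z$. The dimension of $L(lk_1)$ is $k_1 + 1$: one way to see this is to compare dimensions in Proposition \ref{tensor_pr} via $L(lk_1) \otimes L(l-1) \cong L((k_1+1)l - 1)$, whose dimension is $(k_1+1)l$. The $k_1 + 1$ weights are therefore forced to be exactly $lj$ for $j \in \{-k_1, -k_1+2, \ldots, k_1\}$, each with multiplicity one, yielding
\[ L(lk_1)\big\vert_{u_q^- U_q^0 u_q^+} \cong \bigoplus_{\substack{-k_1 \leq j \leq k_1 \\ j \equiv k_1 \,{\rm mod}\, 2}} \CC[lj], \]
and tensoring with $T((l-1) + k_0)$ gives the claimed formula. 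The step requiring the most care is this weight decomposition, since Proposition \ref{L(lm)} only guarantees that $u_q^\pm$ acts trivially and that $K$ acts by $(-1)^{k_1}$, but does not by itself pin down the finer $U_q^0$-eigenvalues encoded by the elements $\qbin{K; c}{t}$; these are extracted from the Frobenius-type structure of $L(lk_1)$ as a $\mathcal{U}$-module, which in turn is controlled by the tensor product theorem \ref{tensor_pr}.
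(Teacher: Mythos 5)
Your proof is correct and follows essentially the same route the paper intends: invoke Theorem \ref{tilt_prod} (after the change of variable $k_0 \mapsto l-1-k_0$, with the boundary case $k_0'=0$ supplied directly by Proposition \ref{tensor_pr} and Theorem \ref{isomLW}(a)), and then use Proposition \ref{L(lm)} to see that $L(lk_1)$ restricts over $u_q^- U_q^0 u_q^+$ to a sum of one-dimensional $U_q^0$-weight modules. The identification of the weights as $\{lj : |j|\le k_1,\ j\equiv k_1\ {\rm mod}\ 2\}$ is most cleanly read off the explicit character $\mathrm{ch}_{L(lk_1)}(x)=\frac{x^{(k_1+1)l}-x^{-(k_1+1)l}}{x^l-x^{-l}}$ from Lemma \ref{fchar}, which is exactly the character input you derive from Proposition \ref{tensor_pr}; your dimension count plus the observation that $\mathcal{U}$ shifts weights by $\pm 2l$ is the same information.
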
 
Note that 
\[ K \CC[lj] = q^{lj} \CC[lj] = \left\{  \begin{array}{cl} 
                                          \CC[lj] ,   &    j \; {\rm even}  \\
                                          -\CC[lj],  & j \; {\rm odd }   
                                          \end{array}    \right.    
                                          \quad \quad  \quad   E \CC[lj] = F \CC[lj] = 0.  \] 

Here is the list of indecomposable modules that can occur in the tensor powers of $L(1) \cong  T(1)$ 
over $u_q^- U_q^0 u_q^+$: 
\[  \{ T(k_0)  \}_{0 \leq k_0 \leq l-2 } , \quad   \{  \CC[lj] \otimes T(l-1) \}_{j \in \Z} , \quad  
\{ \CC[lj] \otimes T(l + k_0) \}_{j \in \Z, \; 0 \leq k_0 \leq l-2} . \] 
Note that $T(k_0) \cong L(k_0)$, $ 0 \leq k_0 \leq l-2$ and $T(l-1)\cong L(l-1)$ are simple modules.

By Corollary \ref{restr}  we have the following decomposition over  $u_q^- U_q^0 u_q^+$: 
\[ 
T(1)^{\otimes N} \cong \bigoplus_{0 \leq k_0 \leq l-2, \; k_0 \leq N} T(k_0)^{\oplus M_{k_0}^{(l)}(N)} \oplus \bigoplus_{j \in \Z, \; l|j| + l-1 \leq N}  (\CC[lj] \otimes T(l-1))^{\oplus m^{(l)}_{l-1,j}(N)}  \oplus  
\] 
\[ \;\;\;\;\;\;\;\;\;\;\;\;\;\;\;\;\;\;\;\;\;\;\;\;\;\;\;\;\;\;\;\;\;\;\;\;\;\;\;\;\;\;\;\;\;\;\;\;\;\;\;\;\;\;\;\;\;\;\;  \bigoplus_{j \in \Z, \; 
0 \leq k_0 \leq l-2, \; l|j| + l + k_0 \leq N} (\CC[lj] \otimes T(l + k_0))^{\oplus m^{(l)}_{l+k_0,j}(N)} .\] 
Here we denoted  by $m^{(l)}_{l-1,j}(N)$ and $m^{(l)}_{l+k_0,j}(N)$ the multiplicities of $\CC[lj] \otimes T(l-1)$
and $\CC[lj] \otimes T(l + k_0)$ respectively.

\begin{theorem}  \label{uUu} 
Let $j \in \mathbb{Z}_{\geq 0}$, and $0 \leq k_0 \leq l-2$. The multiplicities $m^{(l)}_{l-1, \pm j}( N)$ and $m^{(l)}_{l+k_0, \pm j}( N)$ are given by the formulae 
\[ m^{(l)}_{l-1,\pm j}(N) =  \sum_{s \geq \lfloor \frac{j}{2} \rfloor}  M^{(l)}_{l-1+ j({\rm mod}2)l + 2sl}(N) ;  \] 
\[ m^{(l)}_{l+k_0,\pm j}(N)  = \sum_{s \geq \lfloor \frac{j}{2} \rfloor} M^{(l)}_{l+k_0+ j({\rm mod}2)l + 2sl}(N) . \]
The upper limit for $s$ can be calculated as $\lfloor \frac{N-(l-1+ j({\rm mod}2)l)}{2l}\rfloor$ or 
$\lfloor \frac{N-(l+k_0+ j({\rm mod}2)l)}{2l}\rfloor$, but in fact this is unnecessary because for higher $s$ 
 the coefficients $M^{(l)}_\bullet(N)$ under the sum are zero.  
\end{theorem}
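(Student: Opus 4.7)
The plan is to obtain the multiplicities $m^{(l)}_{\bullet, j}(N)$ simply by restricting the $U_q(\lie{sl}_2)$-decomposition of Theorem \ref{comb_coef} down to $u_q^- U_q^0 u_q^+$ using Corollary \ref{restr}, and then collecting like indecomposable summands. Concretely, I would start from
\[ T(1)^{\otimes N} \cong \bigoplus_{k \geq 0} T(k)^{\oplus M^{(l)}_k(N)} \]
and, for every $k \geq l-1$, write $k = k_1 l + (l-1) + k_0'$ with $k_1 \geq 0$ and $0 \leq k_0' \leq l-1$, so that Corollary \ref{restr} gives
\[ T(k_1 l + (l-1) + k_0')\big\vert_{u_q^- U_q^0 u_q^+} \cong \bigoplus_{\substack{-k_1 \leq j \leq k_1 \\ j \equiv k_1 \,{\rm mod}\,2}} \CC[lj] \otimes T((l-1)+k_0'). \]
The tilting summands $T(k_0)$ with $0 \leq k_0 \leq l-2$ stay unchanged (contributing only to the $M^{(l)}_{k_0}(N)$ piece), so no further analysis is needed for them.

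Next, I would fix the target indecomposable $\CC[lj] \otimes T((l-1) + k_0')$ and collect all $U_q$-tilting summands that produce it. By the parity/range condition in the restriction, those are exactly the $T(k_1 l + (l-1) + k_0')$ with $k_1 \equiv j \pmod 2$ and $k_1 \geq |j|$. Writing $k_1 = 2s$ when $j$ is even and $k_1 = 2s+1$ when $j$ is odd, the two cases merge into the single condition $s \geq \lfloor j/2 \rfloor$, and the corresponding $U_q$-index becomes $(l-1) + k_0' + j({\rm mod}\,2)\,l + 2sl$. Splitting the statement according to $k_0' = 0$ (giving the $T(l-1)$-block, whose formula is indexed by $k_0 = -1$ effectively) versus $k_0' = k_0 + 1$ with $0 \leq k_0 \leq l-2$ (giving the $T(l+k_0)$-block) yields exactly the two displayed formulae. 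Symmetry between $j$ and $-j$ is built into the restriction formula because the summation $-k_1 \leq j \leq k_1$ is symmetric, so $m^{(l)}_{\bullet,+j}(N) = m^{(l)}_{\bullet,-j}(N)$ is automatic.

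For the remark about the upper limit of $s$: since $M^{(l)}_k(N) = 0$ whenever $k > N$ (no $T(k)$ can appear in $T(1)^{\otimes N}$ once its Weyl-filtration requires weights beyond $N$), the sum over $s$ truncates automatically at the largest $s$ for which $(l-1) + k_0' + j({\rm mod}\,2)\,l + 2sl \leq N$, which gives the explicit bounds $\lfloor (N - (l-1+j({\rm mod}\,2)l))/(2l)\rfloor$ and $\lfloor (N-(l+k_0+j({\rm mod}\,2)l))/(2l) \rfloor$ respectively. Writing the sums as $s \geq \lfloor j/2 \rfloor$ without a stated upper limit is therefore legitimate and matches the claim.

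The only place where care is needed is the bookkeeping of indices: distinguishing the corollary's $k_0 \in \{0,\ldots,l-1\}$ (which shifts the tilting module by $l-1$) from the theorem's $k_0 \in \{0,\ldots,l-2\}$ (which labels $T(l+k_0)$), and correctly translating the parity condition $k_1 \equiv j \,({\rm mod}\,2)$ into the expression $j({\rm mod}\,2)\,l + 2sl$. I expect this index-matching to be the only subtle step; once it is done, the identity is immediate and no further computation is needed.
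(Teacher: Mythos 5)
Your proposal is correct and takes essentially the same route as the paper, which proves the theorem with the one-line remark that it is a direct consequence of Theorem \ref{comb_coef} and Corollary \ref{restr}; your write-up is exactly the index bookkeeping (collecting, for each fixed target $\CC[lj]\otimes T((l-1)+k_0')$, the contributions from $T(k_1l+(l-1)+k_0')$ with $k_1\geq |j|$, $k_1\equiv j\pmod 2$, and reparametrizing $k_1=2s+j(\mathrm{mod}\,2)$) that makes this direct consequence explicit.
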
 
The proof of this is a direct consequence of Theorem \ref{comb_coef} and Corollary \ref{restr}.

Let $T(l + k_0)$, $0 \leq k_0 \leq l-2$ and $T(l-1)$ be the restrictions of the tilting modules with given highest weights to the small quantum group $u_q$. Following  \cite{APW2}, we conclude that these are indecomposable injective modules. Over the small quantum group $u_q(\lie{sl}_2)$, define the modules $T_{\pm}(m)$, $m \geq l-1$ as follows: 
\[ \CC[lj] \otimes T(l+ k_0) \cong T_+(l+ k_0), \quad   \CC[lj] \otimes T(l-1) \cong T_+(l-1)  \quad j  \; {\rm even}  ; \] 
\[ \CC[lj] \otimes T(l+ k_0) \cong T_-(l+ k_0), \quad   \CC[lj] \otimes T(l-1) \cong T_-(l-1)  \quad j  \; {\rm odd}  ; \] 
Notes that we have natural isomorphisms $T_-(k)\simeq T_+(k)\otimes 1_{-}$ where $1_-$ is a
1-dimensional representation $K\mapsto -1, E,F\mapsto 0$.

Then over the small quantum group we obtain the following decomposition. 
\begin{theorem}\label{small_mult}
\[ 
T(1)^{\otimes N} \cong \bigoplus_{0 \leq k_0 \leq l-2} T(k_0)^{\oplus M^{(l)}_{k_0}(N)} \oplus  T_+(l-1)^{\oplus m^{(l)}_{l-1,+}(N)}   \oplus  T_-(l-1)^{\oplus m^{(l)}_{l-1,-}(N)}  \oplus\]   
\[ \oplus  \bigoplus_{0 \leq k_0 \leq l-2}T_+(l + k_0)^{\oplus m^{(l)}_{l+k_0,+}(N)}    
 \oplus  \bigoplus_{0 \leq k_0 \leq l-2}T_-(l + k_0)^{\oplus m^{(l)}_{l+k_0,-}(N)} ,\] 
where 
\[ m^{(l)}_{l-1,+}(N) = \sum_{j \in \Z, \,{\rm even}} m^{(l)}_{l-1,j}(N) ,  \quad \quad  m^{(l)}_{l-1,-}(N) = \sum_{j \in \Z, \,{\rm odd}} m^{(l)}_{l-1,j}(N) ,   \] 
\[  m^{(l)}_{l+k_0,+}(N)  = \sum_{j \in \Z, \,{\rm even} } m^{(l)}_{l+k_0,j}(N)  \quad \quad  m^{(l)}_{l+k_0,-}(N) = \sum_{j \in \Z, \,{\rm odd} } m^{(l)}_{l+k_0,j}(N) . \] 
\end{theorem}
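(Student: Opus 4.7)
My plan is to derive Theorem \ref{small_mult} from the decomposition of $T(1)^{\otimes N}$ over $u_q^- U_q^0 u_q^+$ displayed immediately before its statement, by restricting each summand further to the small quantum group $u_q \subset u_q^- U_q^0 u_q^+$ and then collecting the summands that become isomorphic. Since restriction preserves direct sums, the task reduces to identifying the restriction of each indecomposable module appearing in that decomposition.

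The key observation is that $\CC[lj]\big|_{u_q}$ depends only on the parity of $j$. Indeed, $u_q$ is generated by $E$, $F$, $K^{\pm 1}$, and on $\CC[lj]$ the generators $E$ and $F$ act by zero while $K$ acts by the scalar $q^{lj}=(-1)^j$. Therefore $\CC[lj]\big|_{u_q}$ is the trivial one-dimensional module when $j$ is even and the sign module $1_-$ when $j$ is odd. Combined with the definition of $T_\pm(m)$ given just above the theorem statement, this yields the $u_q$-module isomorphisms
\[
\CC[lj]\otimes T(l-1)\big|_{u_q}\cong T_\epsilon(l-1),\qquad \CC[lj]\otimes T(l+k_0)\big|_{u_q}\cong T_\epsilon(l+k_0),
\]
where $\epsilon=+$ if $j$ is even and $\epsilon=-$ if $j$ is odd, and $0 \le k_0 \le l-2$. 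The summands $T(k_0)$ for $0 \le k_0 \le l-2$ are simple by Proposition \ref{Lm0} and are unchanged by the restriction.

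Substituting these identifications into the $u_q^- U_q^0 u_q^+$-decomposition and grouping the summands indexed by $j$ of a common parity immediately produces the decomposition claimed in Theorem \ref{small_mult}, with the multiplicities $m^{(l)}_{l-1,\pm}(N)$ and $m^{(l)}_{l+k_0,\pm}(N)$ obtained as the sums of $m^{(l)}_{\bullet,j}(N)$ over even and odd $j$, exactly as stated. The only non-bookkeeping input needed is the indecomposability of each $T_\pm(m)$ as a $u_q$-module, without which the grouping would not be the indecomposable decomposition; this is already invoked via \cite{APW2} in the paragraph preceding the theorem. I do not anticipate any substantive obstacle — the argument is essentially a reorganization of Theorem \ref{uUu} combined with the parity-only dependence of $\CC[lj]$ on $u_q$.
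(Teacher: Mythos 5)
Your argument is correct and follows essentially the same route as the paper, which states that the theorem follows directly from Theorem \ref{uUu} together with the definition of the modules $T_\pm(l+k_0)$, $T_\pm(l-1)$; you simply spell out the parity observation about $\CC[lj]\big|_{u_q}$ that justifies that definition and the resulting regrouping of summands.
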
 
This theorem follows directly from Theorem \ref{uUu} and the definition of the modules $T_\pm (l + k_0)$, $T_\pm(l-1)$.

\section{Recursive formulae for $U_q(\lie{sl}_2)$ and $u_q(\lie{sl}_2)$ tensor products.} \label{recursion}
Denote by $T(lk_1 + k_0)$ the indecomposable tilting module over $U_q ({\lie sl}_2)$ of type 1 with the highest weight $lk_1 + k_0$, with $0 \leq k_0 \leq l-1$, $k_1 \geq 0$. Note that modules $T(k)=L(k)=W(k)$ are irreducible when $0\leq k\leq l-1$ or $k=lk_1-1$.

Then we have:
\begin{eqnarray}\label{A}
T(k_0) \otimes T(1) &=& T(k_0+1) \oplus T(k_0 -1),  \quad 0 \leq  k_0 \leq l-2,  \\ 
T(lk_1 + k_0) \otimes T(1) &=& T(lk_1 + k_0 +1) \oplus T(lk_1 + k_0 -1) ,  \quad 1 \leq k_0 \leq l-3,\, k_1\geq 1 , \nonumber \\ 
T(lk_1 + l-2) \otimes T(1) &=& T(l(k_1 +1) -3) \oplus T((k_1+1)l-1) \oplus T((k_1-1)l-1) ,  \quad  k_1\geq 2, \nonumber \\ 
T(2l-2) \otimes T(1) &=& T(2l-1) \oplus T(2l-3) ,\nonumber \\
T(l k_1 -1) \otimes T(1) &=& T(lk_1) , \quad  k_1 \geq 1,\nonumber \\
T(lk_1) \otimes T(1) &=& T(lk_1 +1) \oplus 2 T(lk_1 -1) , \quad k_1\geq 1.\nonumber
\end{eqnarray}
We assume everywhere that $T(m) =0$ if $m <0$.

Similarly, we can derive recursive tensor product decomposition over the small quantum group $u_q$. 
Over $u_q(\lie{sl}_2)$ we have the following indecomposable modules that can appear in the powers of $T(1)=L(1)$ of type $1$:
$$
W(k_0)=L(k_0)=T(k_0),\quad\text{for $0 \leq k_0 \leq l-2$}, 
$$
$$
T_+ (l-1)=T(l-1),
$$
$$
T_+(l+ k_0)=T(l+k_0),\quad\text{for $0 \leq k_0 \leq l-2$},
$$
$$
T_-(l-1)=T(l-1)\otimes 1_-,
$$
$$
T_-(l+k_0)=T(l+k_0) \otimes 1_-,\quad\text{for $0 \leq k_0 \leq l-2$}. 
$$
Then we have:
\begin{eqnarray}\label{B}
T(k_0) \otimes T(1)  &=& T(k_0-1) \oplus T(k_0 +1) , \quad  1 \leq k_0 \leq l-2,  \\ 
T_{\pm}(l-1) \otimes T(1)  &=& T_{\pm}(l),\nonumber \\ 
T_{\pm}(l) \otimes T(1)  &=& T_{\pm}(l+1) \oplus 2 T_{\pm}(l-1),\nonumber \\ 
T_{\pm}(l+ k_0) \otimes T(1) &=& T(l +k_0 -1)_{\pm} \oplus  T_{\pm}(l+k_0 +1) ,\quad 1 \leq k_0 \leq l-3,\nonumber \\ 
T_{\pm}(2l-2) \otimes T(1) &=& T_{\pm}(2l-3) \oplus 2 T_{\mp} (l-1).\nonumber
\end{eqnarray}

\section{Multiplicities and lattice paths}\label{rec-mult}
Here we reformulate the problem of tensor power decomposition over $U_q(\mathfrak{sl}_2)$ and $u_q(\mathfrak{sl}_2)$ resolved in the previous section in terms of counting lattice paths on the set of dominant weights for the corresponding quantum groups and will give a combinatorial derivation of multiplicity formulae for $T(1)^{\otimes N}$ derived before.
\subsection{Lattice path model for tensor power decomposition of $U_q(\lie{sl}_2)$-modules}\label{lpm}
\subsubsection{Elementary steps and lattice paths.} Tensor product decomposition (\ref{A}) can be regarded as the description of a lattice walk on the set of integral dominant weights which is for $U_q(\lie{sl}_2)$ is $\Z_{\geq 0}=\{0,1,2,\dots\}$.

Let us describe elementary steps of such a walk. We will refer to points of the weight space $\Z_{\geq 0}$ also as nodes (of our lattice walk). We will say a step has multiplicity $2$ if it is counted twice (see below). If the multiplicity of a step is not specified, we assume it has multiplicity one.

\begin{enumerate}\label{def_elem_U}
\item From the node $k$, when $k\neq ml-2, ml-1, ml$ there are two possible steps, one to $k+1$ and one to $k-1$.
\item From the node $ml-2$ ($m\geq 3$) only steps to $ml-1, ml-3, (m-2)l-1$ are possible. From $2l-2$ there are only steps to $2l-1$ and $2l-3$.
\item From the node $ml-1$ there is one step leading to the node $ml$.
\item From the node $ml$ there is a multiplicity $2$ step to the node $ml-1$ and simple (multiplicity one) step to $ml+1$.
\end{enumerate}

Denote the set of these elementary steps $S_U$. They can be described by a diagram on Figure \ref{elem-steps}.
\begin{figure}[h!]
	\centerline{\includegraphics[width=350pt]
	{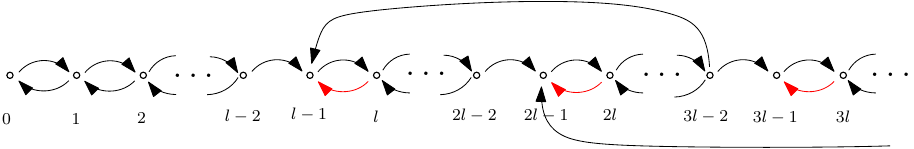}
}
\caption{The set of elementary steps $\Sb_U$, where red arrows denote steps of multiplicity 2.}
\label{elem-steps}
\end{figure}

A graph of such lattice walk is a lattice path in $\cL_U=\Z_{\geq 0}\times \Z_{\geq 0}$ with points $(x,y)$ where $x\in \Z_{\geq 0}$ is a dominant weight and $y$ is the degree $N$ is the tensor power $T(1)^{\otimes N}$.
The set of elementary steps $\Sb_U$ for such lattice paths is 
\[
\Sb_U=\{((x,y), (x',y+1))\,|\, (x,x')\in S_U\}
\]
Note that multiplicities of elementary steps define weights on $\Sb_U$. For large $k=lk_1+k_0$ the graph of elementary steps is $2l$-periodic.

Examples of such lattice paths are given on Figure \ref{two-paths}.
\begin{figure}[h!]
	\centerline{\includegraphics[width=250pt]
	{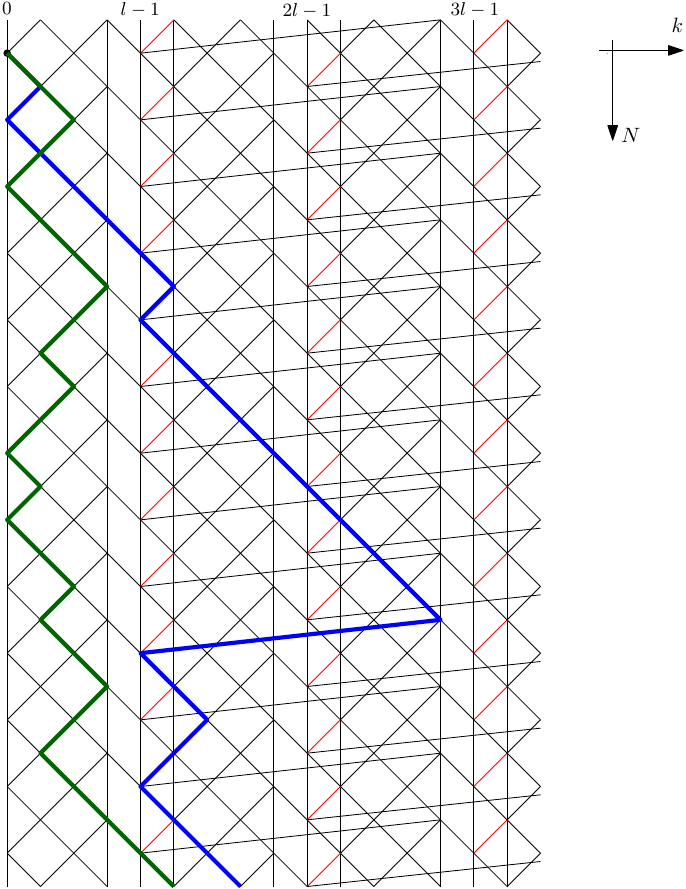}
}
	\caption{Two paths with elementary steps from $\Sb_U$. Red lines indicate elementary steps with multiplicity 2. A vertex with coordinate $(k,N)$ represents the highest weight $k$ of the tilting module $T(k)\subset T(1)^{\otimes N}$. The number of lattice paths from the origin to $(k,N)$ is the multiplicity $M_k^{(l)}(N)$. Elementary steps $\Sb_U$ from a vertex on this lattice are represented by possible lines down the lattice. Similarly, the number of paths starting at $(n,0)$ and descending to $(k,N)$ is the multiplicity of the tilting module $T(k)\subset T(n)\otimes T(1)^{\otimes N}$.}
\label{two-paths}
\end{figure}
Using recursive formulae (\ref{A}) for the decomposition of tensor products and the corresponding lattice path model, these multiplicities can be described as follows:
\begin{theorem}\label{U-model} The multiplicity of the tilting $U_q(\lie{sl}_2)$-module $T(k)$ in the decomposition 
of $T(1)^{\otimes N}$ is equal to the weighted number of lattice paths on $\cL_U$ connecting $(0,0)$ and $(k,N)$ with weights given by multiplicities of elementary steps $\Sb_U$.
\end{theorem}
Path counting formula for this lattice path model was derived in \cite{Sol} where the number of paths was computed explicitly in terms $F_k^{(N)}$ using combinatorial methods such as reflection principle. This formula coincides with expressions (\ref{p-numbers}).

\subsection{Lattice path model for decomposition of tensor powers of $u_q(\lie{sl}_2)$-modules}\label{small_lpm}
\subsubsection{Elementary steps and lattice paths} In this case the lattice is $\cL_u=\{0,1,\dots, 3l-2\}\times \Z_{\geq 0}$. The following set of elementary steps is determined by the decomposition rules (\ref{B}).
\begin{enumerate}\label{def_elem_u}
	\item From the node $0$ there is only one step to $1$.
	\item From the node $k$, when $k\neq ml-2,ml-1,ml$ there are two possible steps, one to $k+1$ and one to $k-1$.
	\item From the node $2l-2$ there is a step to $2l-1$ with multiplicity $2$ and a step to $2l-3$.
	\item From the node $3l-2$ there is a step to $l-1$ with multiplicity $2$ and a step to $3l-3$.
	\item From the node $ml-1$ there is only step to $ml$.
	\item From the node $ml$ there is a step to $ml-1$ with multiplicity $2$ and a step to $ml+1$.
\end{enumerate}
The set $S_u$ can be characterized by the diagram on Figure \ref{A-u-steps}.
\begin{figure}[h!]
	\centerline{\includegraphics[width=250pt]
	{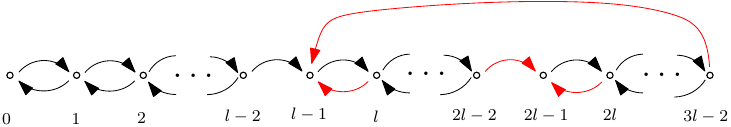}
}
	\caption{The set of elementary steps $\Sb_u$, where red arrows denote steps of multiplicity 2.}
\label{A-u-steps}
\end{figure}
A graph of such lattice walk is a lattice path in $\cL_u$ with points $(x,y)$ where $x\in \Z_{\geq 0}$ is a dominant weight and $y$ is the degree $N$ is the tensor power $T(1)^{\otimes N}$. Nodes of this graph are identified with tilting modules as follows:
\begin{itemize}
\item To each node $0\leq m \leq l-2$ we assign module $T(m)$, where $T(0)=\CC$;
\item To each node $l-1\leq m \leq 2l-2$ we assign module $T_+(m)$;
\item To each node $2l-1\leq m \leq 3l-2$ we assign module $T_-(m-l)$.
\end{itemize}
The set of elementary steps $\Sb_u$ is given by
\[
\Sb_u=\{((x,y), (x',y+1))\,|\, (x,x')\in S_u\}
\]
Multiplicities of elementary step define weights on $\Sb_u$.

Examples of such lattice paths are given on Figure \ref{two-paths-u}.
\begin{figure}[h!]
	\centerline{\includegraphics[width=250pt]
	{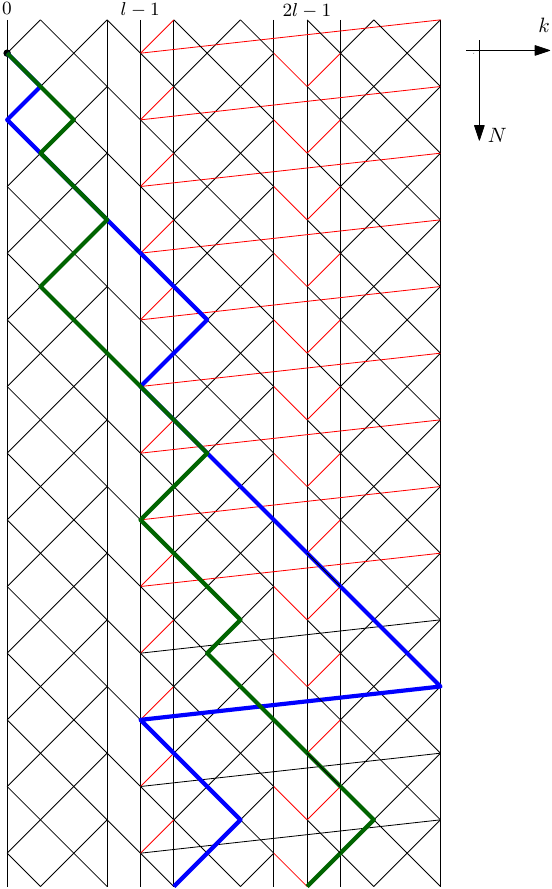}
}
	\caption{Two paths with elementary steps from $\Sb_u$. Red arrows indicate elementary steps with multiplicity 2. A vertex with coordinate $(k,N)$ represents the highest weight $k$ of the tilting module $T(k)\subset T(1)^{\otimes N}$. The number of lattice paths from the origin to $(k,N)$ is the multiplicity $m_k^{(l)}(N)$. Elementary steps $\Sb_u$ from a vertex on this lattice are represented by possible lines down the lattice. Similarly, the number of paths starting at $(n,0)$ and descending to $(k,N)$ is the multiplicity of the tilting module $T(k)\subset T(n)\otimes T(1)^{\otimes N}$.}
\label{two-paths-u}
\end{figure}
Using recursive formulae (\ref{B}) we arrive to the following theorem describing multiplicities in terms of lattice paths:
\begin{theorem} \label{u-model} The multiplicity of the tilting $u_q(\lie{sl}_2)$-module $T(k)$ in the decomposition of $T(1)^{\otimes N}$ is equal to the weighted number of lattice paths in $\cL_u$ connecting $(0,0)$ and $(k,N)$ with weights given by multiplicities of elementary steps $\Sb_u$.
\end{theorem}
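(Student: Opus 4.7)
The plan is to prove Theorem \ref{u-model} by induction on $N$, leveraging the tautological match between the elementary steps $\Sb_u$ and the summands in the tensor product recursion (\ref{B}). The base case $N=0$ is trivial: $T(1)^{\otimes 0}=T(0)=\CC$, so $m^{(l)}_0(0)=1$ and there is a unique (length-zero) path at the origin $(0,0)$, while $m^{(l)}_k(0)=0$ for $k\neq 0$ and there are no paths from $(0,0)$ to $(k,0)$.

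For the inductive step, write $T(1)^{\otimes(N+1)}\cong T(1)^{\otimes N}\otimes T(1)$ and expand the first factor by the inductive hypothesis. Distributing over the tensor product and applying the decomposition rules (\ref{B}) to each summand $T(k')\otimes T(1)$ gives the linear recursion
\[
m^{(l)}_k(N+1)\;=\;\sum_{k'} c_{k',k}\, m^{(l)}_{k'}(N),
\]
where $c_{k',k}$ is the multiplicity of the indecomposable $T(k)$ (or $T_\pm(k)$, under the node identification) appearing in $T(k')\otimes T(1)$. On the combinatorial side, a path of length $N+1$ from $(0,0)$ to $(k,N+1)$ is obtained by concatenating a path of length $N$ from $(0,0)$ to some $(k',N)$ with an elementary step $((k',N),(k,N+1))\in\Sb_u$, and its weight is the product of the weights. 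Conditioning on $k'$, the weighted path count $P(k,N+1)$ satisfies
\[
P(k,N+1)\;=\;\sum_{k'} w(k',k)\, P(k',N),
\]
where $w(k',k)\in\{0,1,2\}$ is the weight of the elementary step from $k'$ to $k$ (with $w(k',k)=0$ if no such step exists).

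It therefore suffices to check, case by case, that $c_{k',k}=w(k',k)$ for every pair $(k',k)$. This is a direct verification: the six rules of (\ref{B}) correspond one-to-one with the six rules (a)--(f) defining $\Sb_u$, once one uses the node identification $m\leftrightarrow T(m)$ for $0\leq m\leq l-2$, $m\leftrightarrow T_+(m)$ for $l-1\leq m\leq 2l-2$, and $m\leftrightarrow T_-(m-l)$ for $2l-1\leq m\leq 3l-2$. For instance, the relation $T_+(2l-2)\otimes T(1)=T_+(2l-3)\oplus 2\,T_-(l-1)$ corresponds to a simple step $2l-2\to 2l-3$ together with a multiplicity-two step $2l-2\to 2l-1$, matching rule (c); the relation $T_-(2l-2)\otimes T(1)=T_-(2l-3)\oplus 2\,T_+(l-1)$ corresponds under the identification $3l-2\leftrightarrow T_-(2l-2)$ and $3l-3\leftrightarrow T_-(2l-3)$ to a simple step $3l-2\to 3l-3$ and a multiplicity-two step $3l-2\to l-1$, matching rule (d); the rules $T_\pm(l-1)\otimes T(1)=T_\pm(l)$ and $T_\pm(l)\otimes T(1)=T_\pm(l+1)\oplus 2T_\pm(l-1)$ give rules (e) and (f) in both the $+$ and $-$ blocks.

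The only real obstacle is the bookkeeping of the $T_\pm$ signs across the three blocks of nodes, making sure that a step exiting a $+$-block node lands in the correct $+$- or $-$-block and has the correct multiplicity. Once this dictionary is verified, the two recursions coincide, and the induction closes to give $m^{(l)}_k(N)=P(k,N)$ for all $k$ and $N$, which is the statement of the theorem.
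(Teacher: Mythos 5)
Your proof is correct and takes exactly the approach the paper intends (the paper leaves the argument implicit, stating only that the theorem follows from the recursive formulae (\ref{B}) together with the definition of the lattice path model). Your induction with the explicit verification that $c_{k',k}=w(k',k)$ under the node identification $m\leftrightarrow T(m)$, $m\leftrightarrow T_+(m)$, $m\leftrightarrow T_-(m-l)$ across the three blocks $\{0,\dots,l-2\}$, $\{l-1,\dots,2l-2\}$, $\{2l-1,\dots,3l-2\}$ is precisely the content the paper relies on, and the sample cases you check (nodes $2l-2$, $3l-2$, $ml-1$, $ml$) are the only nontrivial ones.
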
 
Now we can compute multiplicities using the lattice path model and compare the formulae with the ones that we already derived in Theorem \ref{small_mult}
\begin{theorem}\label{th_u} 
\begin{enumerate}
\item 
		The number of lattice paths is given by the following formulae.
		For $k_0\neq l-1$ we have
		\begin{equation}\label{ind1}
		m^{(l)}_{l+k_0,+}(N)=\sum_{k=1}^{\lfloor\frac{N}{2l}\rfloor}k^2F^{(N)}_{k_0-l-2kl}+\sum_{k=0}^{\lfloor\frac{N-l}{2l}\rfloor}(k+1)^2 F^{(N)}_{k_0+l+2kl}
		\end{equation}
		\begin{equation}\label{ind3}
		m^{(l)}_{l+k_0,-}(N)=\sum_{k=1}^{\lfloor\frac{N-l}{2l}\rfloor}(k^2+k)F^{(N)}_{k_0-2l-2kl}+\sum_{k=0}^{\lfloor\frac{N-2l}{2l}\rfloor}((k+1)^2+(k+1)) F^{(N)}_{k_0+2l+2kl}
		\end{equation}
		For $T_{+}(l-1)$, $T_{-}(l-1)$ the multiplicities are:
		\begin{equation*}
		m^{(l)}_{l-1,+}(N)=\sum_{k=0}^{\lfloor\frac{N}{2l}+\frac{1}{2}\rfloor}(2k+1)F^{(N)}_{l-1+2kl},
		\end{equation*}
		\begin{equation*}
		m^{(l)}_{l-1,-}(N)=\sum_{k=0}^{\lfloor\frac{N}{2l}+1\rfloor}(2k+2)F^{(N)}_{2l-1+2kl},
		\end{equation*}
		Note that in these formulae some $F^{(N)}_m$ can be negative.
\item These numbers are exactly the same as in Theorem \ref{small_mult}.
\end{enumerate}
\end{theorem}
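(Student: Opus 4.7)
The plan is to prove part (a) by chaining together the lattice path interpretation of Theorem \ref{u-model} with the restriction formulas of Theorems \ref{small_mult} and \ref{uUu} and the explicit $U_q(\lie{sl}_2)$ multiplicity formula of Theorem \ref{comb_coef}; part (b) will then follow automatically, since both routes compute the same multiplicity.

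First I would use Theorems \ref{small_mult} and \ref{uUu} to collapse the $\pm j$ structure of the restricted decomposition. The identification $\CC[lj]\otimes T(\cdot)\cong T_{+}(\cdot)$ for even $j$ and $T_{-}(\cdot)$ for odd $j$ gives
\[
m^{(l)}_{\star,+}(N)=m^{(l)}_{\star,0}(N)+2\sum_{j'\geq 1}m^{(l)}_{\star,2j'}(N),\qquad m^{(l)}_{\star,-}(N)=2\sum_{j'\geq 0}m^{(l)}_{\star,2j'+1}(N),
\]
while each $m^{(l)}_{\star,j}(N)$ is an infinite tail sum of $M^{(l)}_k(N)$'s by Theorem \ref{uUu}. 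Exchanging the order of summation collapses these to
\[
m^{(l)}_{l+k_0,+}(N)=\sum_{s\geq 0}(2s+1)\,M^{(l)}_{(1+2s)l+k_0}(N),\qquad m^{(l)}_{l+k_0,-}(N)=2\sum_{s\geq 0}(s+1)\,M^{(l)}_{(2+2s)l+k_0}(N),
\]
and analogously $\sum_{s\geq 0}(2s+1)M^{(l)}_{(1+2s)l-1}(N)$ and $2\sum_{s\geq 0}(s+1)M^{(l)}_{(2+2s)l-1}(N)$ for the $T_\pm(l-1)$ cases.

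Second, I would substitute the explicit formula from Theorem \ref{comb_coef}. For the $T_\pm(l-1)$ cases, part (a) of that theorem yields $M^{(l)}_{k_1l-1}(N)=F^{(N)}_{k_1l-1}$, so the stated formulas for $m^{(l)}_{l-1,\pm}(N)$ follow immediately. For the $T_\pm(l+k_0)$ cases with $0\leq k_0\leq l-2$, part (b) expands $M$ as a signed pair of sums over $F^{(N)}$. Exchanging order of summation once more, the inner sums collapse by the elementary identities
\[
\sum_{s=0}^{a}(2s+1)=(a+1)^{2},\qquad \sum_{s=0}^{a-1}(2s+1)=a^{2},\qquad \sum_{s=0}^{a-1}2(s+1)=a(a+1),
\]
producing the quadratic coefficients $(k+1)^{2}$ and $k^{2}$ (resp.\ $(k+1)^{2}+(k+1)$ and $k^{2}+k$) that appear in the statement. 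One of the resulting sums already has the index shape $F^{(N)}_{k_0+l+2kl}$ or $F^{(N)}_{k_0+2l+2kl}$, while the other appears with indices of the form $F^{(N)}_{(1+2k)l-k_0-2}$ or $F^{(N)}_{2kl-k_0-2}$ and an opposite sign.

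To bring that second family into the negative-index form given in the statement, I would invoke the symmetry
\[
F^{(N)}_{-k-2}=-F^{(N)}_{k},
\]
which is immediate from the closed form $F^{(N)}_k=\binom{N}{(N-k)/2}-\binom{N}{(N-k-2)/2}$ combined with $\binom{N}{j}=\binom{N}{N-j}$. It absorbs the minus sign in front of the second sum and reindexes $F^{(N)}_{(1+2k)l-k_0-2}$ as $-F^{(N)}_{k_0-l-2kl}$, and similarly for the $T_-$ case. The upper bounds of summation come out correctly because $F^{(N)}_m$ vanishes once $|m|>N$. The main obstacle is purely bookkeeping: careful sign tracking and index shifts in the nested double-sum exchanges, together with the observation that after the reflection symmetry some $F^{(N)}_m$ with negative $m$ remain nonzero and must contribute with the correct signs in order to match the stated formulas; part (b) is then a direct comparison.
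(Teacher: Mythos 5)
Your proposal is correct, and it reaches the conclusion by a genuinely different route from the paper's proof, at least for part (a). The paper proves part (a) directly as a combinatorial statement: it starts from a known counting formula for the auxiliary lattice path model of reference [PS] (equation (\ref{auxmod}) in Appendix \ref{prthu}), constructs a ``twist'' map $\varphi$ from the auxiliary lattice to a cylinder that realizes the $u_q$-path model, and then carries out an induction on $N$ in steps of $2l$ with several supporting lemmas (including the combinatorial identity $\sum_{j=0}^{n-1}(-1)^{j+1}4^j\binom{j+n}{2j+1}=(-1)^n n$) to verify that the image count matches the stated $F^{(N)}$-formula. Only in part (b) does the paper do what you propose: substitute Theorem \ref{comb_coef} into the tail sums of Theorem \ref{uUu}/\ref{small_mult}, exchange the order of summation, use $\sum_{j\,\mathrm{even},\,|j|+2\tilde m=2m}(\tilde m+1)=(m+1)^2$ and the reflection $-F^{(N)}_{kl-k_0-2}=F^{(N)}_{-kl+k_0}$, and collapse.

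Your route instead derives the formulas purely from representation theory (exactly the paper's part (b) computation) and then invokes Theorem \ref{u-model}, which identifies multiplicities with weighted path counts, to conclude part (a); part (b) is then tautological since the derivation started from Theorem \ref{small_mult}. This is shorter and logically sound given that Theorem \ref{u-model} is established independently from the recursion (\ref{B}). What it gives up is the independent combinatorial content: the paper's part (a) is a genuine path-counting derivation (via reflection-style arguments inherited from [PS]/[Sol]) that does not pass through the algebra at all, so parts (a) and (b) together serve as a cross-check of the lattice model against representation theory, whereas your route makes (b) vacuous. One small bookkeeping slip in your sketch: in the $T_-$ case the second family of $F$-indices comes out as $F^{(N)}_{(2+2k)l-k_0-2}$, not $F^{(N)}_{2kl-k_0-2}$; after the reflection $F^{(N)}_{-m-2}=-F^{(N)}_m$ this still lands on $F^{(N)}_{k_0-2l-2kl}$ as required, so the conclusion is unaffected.
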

The proof of this theorem is given in Appendix \ref{prthu}.

\section{Integral formula for multiplicities}\label{rec-int}
\subsection{Characters of tilting and Weyl modules.}
We define the character of an $U_q(\lie{sl}_2)$-module $V$ as the Laurent polynomial
(see section \ref{modules}) as
\[
\mathrm{ch}_V (x)=\sum_j \dim V_j x^j
\]
Here the sum is taken over weight subspaces of $V$ and $i$ ranges over all possible weights that occur in $V$. We will treat $x$ either as a formal variable, or, as a variable $x=e^{i\theta}$ on a unit circle in the complex plane. Because our characters are Laurent polynomials both make sense.

Theorem \ref{Weyl} and Theorem \ref{isomLW} give the following identifications and relations:
\begin{eqnarray*}
\mathrm{ch}_{L(k_0)}(x)=\mathrm{ch}_{W(k_0)}(x)=\mathrm{ch}_{T(k_0)}(x),& 0\leq k_0\leq l-1,\\
\mathrm{ch}_{W(k_1l+k_0)}(x)=\mathrm{ch}_{L(k_1l+k_0)}(x)+\mathrm{ch}_{L(k_1l-2-k_0)}(x),& 0\leq k_0\leq l-2,\,k_1\in\mathbb{Z}_{>0},\\
\mathrm{ch}_{T(k_1l+k_0)}(x)=\mathrm{ch}_{W(k_1l+k_0)}(x)+\mathrm{ch}_{W(k_1l-2-k_0)}(x),& 0\leq k_0\leq l-2,\,k_1\in\mathbb{Z}_{>0},\\
\mathrm{ch}_{L(k_1l+l-1)}(x)=\mathrm{ch}_{W(k_1l+l-1)}(x)=\mathrm{ch}_{T(k_1l+l-1)}(x),& k_1\in\mathbb{Z}_{>0}.
\end{eqnarray*}

\begin{lemma} \label{fchar}Characters of tilting modules are:
\begin{equation*}
\mathrm{ch}_{T(k_1l+k_0)}(x)=
\begin{cases}
\frac{x^{k_0+1}-x^{-(k_0+1)}}{x-x^{-1}},& \textit{if $0\leq k_0\leq l-1$ and $k_1=0$},\\
\frac{x^{(k_1+1)l}-x^{-(k_1+1)l}}{x-x^{-1}},& \textit{if $k_0=l-1$ and $k_1\in\mathbb{Z}_{>0}$},\\
(x^{k_0+1}+x^{-(k_0+1)})\frac{x^{k_1l}-x^{-k_1l}}{x-x^{-1}},& \textit{if $0\leq k_0\leq l-2$ and $k_1\in\mathbb{Z}_{>0}$}.
\end{cases}
\end{equation*}
	\end{lemma}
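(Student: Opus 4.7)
The proof is essentially a direct computation from Corollary \ref{tilt_char} and the explicit description of the Weyl modules in Section \ref{modules}. The plan is to reduce each of the three cases to the standard formula for the Weyl character and then simplify.

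First, I would record the Weyl character formula: for every $m \in \mathbb{Z}_{\geq 0}$, the module $W(m)$ has the basis $\{x, F^{(1)}x, \ldots, F^{(m)}x\}$ with $K$-weights $m, m-2, \ldots, -m$, so
\[
\mathrm{ch}_{W(m)}(x) = x^m + x^{m-2} + \cdots + x^{-m} = \frac{x^{m+1}-x^{-(m+1)}}{x-x^{-1}}.
\]
This is the only character identity I will need, together with the classification in Theorem \ref{tilt_prop2}.

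For the first case, $k_1 = 0$ and $0 \leq k_0 \leq l-1$, Theorem \ref{tilt_prop2}(a) gives $T(k_0) \cong W(k_0)$, so $\mathrm{ch}_{T(k_0)}(x) = \frac{x^{k_0+1}-x^{-(k_0+1)}}{x-x^{-1}}$, which is the stated formula. For the second case, $k_0 = l-1$ and $k_1 \geq 1$, the weight is $k_1 l + l - 1 = (k_1+1)l - 1$, which is again of the form covered by Theorem \ref{tilt_prop2}(a), so $T(k_1 l + l - 1) \cong W((k_1+1)l - 1)$ and its character is $\frac{x^{(k_1+1)l}-x^{-(k_1+1)l}}{x-x^{-1}}$.

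The only nontrivial case is the third one, $0 \leq k_0 \leq l-2$ and $k_1 \geq 1$. Here I would apply Corollary \ref{tilt_char}:
\[
\mathrm{ch}_{T(lk_1+k_0)}(x) = \mathrm{ch}_{W(lk_1+k_0)}(x) + \mathrm{ch}_{W(lk_1-2-k_0)}(x).
\]
Substituting the Weyl character formula and combining the two fractions, the numerator becomes
\[
x^{lk_1+k_0+1}-x^{-(lk_1+k_0+1)} + x^{lk_1-k_0-1} - x^{-(lk_1-k_0-1)},
\]
which I would regroup as
\[
x^{lk_1}\bigl(x^{k_0+1}+x^{-(k_0+1)}\bigr) - x^{-lk_1}\bigl(x^{k_0+1}+x^{-(k_0+1)}\bigr) = \bigl(x^{k_0+1}+x^{-(k_0+1)}\bigr)\bigl(x^{lk_1}-x^{-lk_1}\bigr).
\]
Dividing by $x-x^{-1}$ yields the desired formula. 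There is no real obstacle here; the only mild point to check is that $lk_1-2-k_0 \geq 0$ in the range $k_1 \geq 1$, $0 \leq k_0 \leq l-2$, which is immediate since $lk_1 - 2 - k_0 \geq l - 2 - (l-2) = 0$, so the second Weyl character is well defined.
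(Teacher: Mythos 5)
Your proof is correct, and it is actually more direct than the paper's. The paper establishes these formulae by first computing the irreducible characters $\mathrm{ch}_{L(k_1 l + k_0)}$ by means of the tensor product theorem (Proposition \ref{tensor_pr}) — it writes $\mathrm{ch}_{L(lm_1)} = \mathrm{ch}_{W((m_1+1)l-1)}/\mathrm{ch}_{W(l-1)}$, verifies consistency against the short exact sequences from Theorem \ref{Weyl}, and only then passes to the tilting characters via Corollary \ref{tilt_char}. You skip the entire detour through $L$-characters: you read off the first two cases from Theorem \ref{tilt_prop2}(a), and for the third case you substitute the Weyl denominator formula directly into $\mathrm{ch}_{T(lk_1+k_0)} = \mathrm{ch}_{W(lk_1+k_0)} + \mathrm{ch}_{W(lk_1-2-k_0)}$ and factor the numerator, which is exactly the final step of the paper's argument and is a one-line algebraic identity. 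Your sanity check that $lk_1 - 2 - k_0 \geq 0$ in the third regime is a useful detail the paper leaves implicit. What the paper's longer route buys is the explicit formula for $\mathrm{ch}_{L(k_1 l + k_0)}$, which it records along the way and which is relevant elsewhere in the paper, but for the statement of Lemma \ref{fchar} itself your argument is cleaner.
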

\begin{proof}
	Weight $k$ can be uniquely written as $k=k_1l+k_0$, where $0\leq k_0\leq l-1$ and $k_1\in\mathbb{Z}_{\geq 0}$. Characters for Weyl modules $W(k)$ are 	
	\begin{equation*}
	\mathrm{ch}_{W(k)}(x)=\frac{x^{k+1}-x^{-(k+1)}}{x-x^{-1}}.
	\end{equation*}
	From Theorem \ref{tensor_pr} we know, that 
	\begin{equation}\label{ch1}
	\mathrm{ch}_{L(k_1l+k_0)}(x)=\mathrm{ch}_{L(k_1l)}(x)\mathrm{ch}_{L(k_0)}(x),
	\end{equation}
	and for $0\leq k\leq l-2$ or $k=ml-1$, $m\in\mathbb{Z}_{>0}$
	\begin{equation}\label{ch2}
	\mathrm{ch}_{L(k)}(x)=\mathrm{ch}_{W(k)}(x)=\mathrm{ch}_{T(k)}(x).
	\end{equation}
	formulae (\ref{ch1}) and (\ref{ch2}) give
	\begin{equation*}
	\mathrm{ch}_{W((m+1)l-1)}(x)=\mathrm{ch}_{L(ml)}(x)\mathrm{ch}_{L(l-1)}(x)=\mathrm{ch}_{L(ml)}(x)\mathrm{ch}_{W(l-1)}(x),
	\end{equation*}
	which gives
	\begin{equation*}
	\mathrm{ch}_{L(ml)}(x)=\frac{\mathrm{ch}_{W((m+1)l-1)}(x)}{\mathrm{ch}_{W(l-1)}(x)}=\frac{x^{(m+1)l}-x^{-(m+1)l}}{x^l-x^{-l}}
	\end{equation*}
	and, concluding, we have
	\begin{equation*}
	\mathrm{ch}_{L(k_1l+k_0)}(x)=\frac{x^{k_0+1}-x^{-(k_0+1)}}{x-x^{-1}}\frac{x^{(k_1+1)l}-x^{-(k_1+1)l}}{x^l-x^{-l}}.
	\end{equation*}
	Note that this definition satisfies condition, given by short exact sequences in Theorem \ref{Weyl}, meaning that for $k_0\neq l-1$ we indeed have
	\begin{equation*}
	\mathrm{ch}_{W(k_1l+k_0)}(x)=\mathrm{ch}_{L(k_1l+k_0)}(x)+\mathrm{ch}_{L(k_1l-k_0-2)}(x).
	\end{equation*}
	Using exact sequences from Theorem \ref{isomLW}, we can derive characters for the rest of the tilting modules. Suppose $k_0\neq l-1, \ \ k_1\in\mathbb{Z}_{>0}$, then
	\begin{eqnarray*}
	\mathrm{ch}_{T(k_1l+k_0)}(x)=\mathrm{ch}_{W(k_1l+k_0)}(x)+\mathrm{ch}_{W(k_1l-k_0-2)}(x)=\\
	=(x^{k_0+1}+x^{-(k_0+1)})\frac{x^{k_1l}-x^{-k_1l}}{x-x^{-1}}\nonumber,
	\end{eqnarray*}
	which concludes the proof.
\end{proof}

\subsection{Scalar product and orthogonality.}
Consider the scalar product on functions on the unit circle which is the radial part of the Haar measure on $SU(2)$. For two functions $f, g: S^1=\{z=e^{i\theta}\}\to \CC$ define
\begin{equation}\label{sp}
(f,g)=\frac{1}{4\pi}\int_0^{2\pi}\overline{f(e^{i\theta})}g(e^{i\theta})|e^{i\theta}-e^{-i\theta}|^2d\theta
\end{equation}
In particular it defines the scalar product on the space of Laurent polynomials in $x$ which are symmetric with respect to the Weyl reflection $x\mapsto x^{-1}$.

We will use the following notations for characters of irreducible representations $L(k)$, Weyl modules $W(k)$ and tilting modules $T(k)$.
\begin{equation*}
\mathrm{ch}_{L(k_1l+k_0)}(x)=\chi^L_{k_1,k_0},\quad\mathrm{ch}_{W(k_1l+k_0)}(x)=\chi^W_{k_1,k_0},\quad\mathrm{ch}_{T(k_1l+k_0)}(x)=\chi^T_{k_1,k_0}.
\end{equation*}

Characters of Weyl modules form an orthonormal basis in the space $L^2(S^{1})$ with respect to the scalar product (\ref{sp}):
\begin{eqnarray*}
(\chi^W_{k_1,k_0},\chi^W_{m_1,m_0})=\delta_{k_1,m_1}\delta_{m_0,k_0},
\end{eqnarray*}
Scalar products between characters of tilting modules and Weyl modules are:
\begin{equation*}
(\chi^L_{k_1,k_0},\chi^T_{m_1,m_0})=
\begin{cases}
\delta_{l-2-k_0,m_0}(\delta_{k_1+1,m_1}-\delta_{0,m_1}),& \textit{if $k_0\neq l-1$, $k_1\in\mathbb{Z}_{>0}$},\\
\delta_{k_1,m_1}\delta_{k_0,m_0},& \textit{otherwise}.
\end{cases}
\end{equation*}

In order to find an integral formula for multiplicities, similar to the one that exists for compact Lie groups and is based on the orthogonality of characters of irreducible modules, let us find the basis orthogonal to the basis of characters of tiltling modules. 
\begin{theorem} Functions 
\begin{equation*}
\eta_{k_1,k_0}=\frac{(x^{k_1l-1}+x^{-(k_1l-1)})(x^{k_0-l+1}-x^{-(k_0-l+1)})}{(x-x^{-1})(x^l-x^{-l})}.
\end{equation*}
form a basis orthogonal to the basis of characters of tilting modules:
\begin{eqnarray*}
	(\chi^T_{m_1,m_0},\eta_{k_1,k_0})=\delta_{m_1,k_1}\delta_{m_0,k_0}
\end{eqnarray*}
\end{theorem}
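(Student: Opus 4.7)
My strategy is to compute the scalar product by expanding $\eta_{k_1,k_0}$ in the orthonormal basis of Weyl characters and then pairing with the known expansion of $\chi^T_{m_1,m_0}$. First, I would expand the denominator $1/(x^l-x^{-l})$ formally as a geometric series in $x^{\pm 2l}$, substitute into the given formula for $\eta_{k_1,k_0}$, and regroup the result to obtain an expansion
\[
\eta_{k_1,k_0}(x) \;=\; \sum_{n\ge 0} u_n\,\chi^W_{nl+k_0}(x) \;+\; \sum_{n\ge 0} v_n\,\chi^W_{nl+(l-2-k_0)}(x),
\]
with coefficients $u_n,v_n\in\{-1,0,1\}$ supported on arithmetic progressions in $n$ determined by $k_1$ (and compatible with the reflection $k_0\leftrightarrow l-2-k_0$ that appears both in the linkage principle and in the second shift of Corollary~\ref{tilt_char}). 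The key reason only these two linkage cosets of Weyl characters appear is the identity $(x^a+x^{-a})(x^b-x^{-b})=(x^{a+b}-x^{-(a+b)})+(x^{a-b}-x^{-(a-b)})$ applied to the numerator of $\eta_{k_1,k_0}$.

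Next I would use Corollary~\ref{tilt_char} to write $\chi^T_{m_1,m_0}$ as a sum of at most two Weyl characters, and pair with $\eta_{k_1,k_0}$ using the orthonormality $(\chi^W_i,\chi^W_j)=\delta_{i,j}$ stated just above the theorem. This reduces $(\chi^T_{m_1,m_0},\eta_{k_1,k_0})$ to a short linear combination of the $u_n$'s and $v_n$'s. Moreover, the required orthogonality forces the two-term recurrence
\[
u_n+v_{n-1}=\delta_{n,k_1}\delta_{m_0,k_0}, \qquad v_n+u_{n-1}=\delta_{n,k_1}\delta_{m_0,l-2-k_0},
\]
with the initial condition $u_0=v_0=0$; its unique solution coincides with the coefficients produced by the geometric-series expansion, which yields $(\chi^T_{m_1,m_0},\eta_{k_1,k_0})=\delta_{m_1,k_1}\delta_{m_0,k_0}$.

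The boundary cases $k_0=l-1$, $k_1=0$, $m_0=l-1$, $m_1=0$ must be handled separately. In each of them Theorem~\ref{isomLW} gives $T\cong W\cong L$, so $\chi^T$ is a single Weyl character, and the formula for $\eta_{k_1,k_0}$ also degenerates (for instance, when $k_0=l-1$ the second factor in its numerator vanishes identically, and one reinterprets $\eta_{k_1,l-1}:=\chi^W_{k_1,l-1}$). In all such regimes the orthogonality statement reduces directly to the orthonormality of Weyl characters.

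The main obstacle is the formal interpretation of the infinite-series expansion of $\eta_{k_1,k_0}$: as a rational function it has simple poles on the unit circle at every $2l$-th root of unity, so the integral defining $(\cdot,\cdot)$ does not converge absolutely, and the scalar product with $\chi^T$ should be read as the algebraic pairing obtained via the Weyl-character orthonormal basis. Equivalently, identifying the closed-form rational expression of $\eta_{k_1,k_0}$ with the sum of its formal Weyl-character expansion amounts to a generating-function identity for the characters of the linkage orbit of $k_0$; this is the core technical ingredient and the place where the precise form of the numerator of $\eta_{k_1,k_0}$, together with the divisibility of $x^{k_1 l}-x^{-k_1 l}$ by $x^l-x^{-l}$, is used to match the linkage pattern of the tilting characters.
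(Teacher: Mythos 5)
Your proposal takes essentially the same route as the paper: expand $\eta_{k_1,k_0}$ in the orthonormal basis of Weyl characters, expand $\chi^T_{m_1,m_0}$ via Corollary~\ref{tilt_char}, and pair term-by-term. The one step the paper makes explicit that you only sketch is the actual expansion
\[
\eta_{k_1,k_0}=\sum_{j=0}^{\infty}\bigl(\chi^W_{k_1+2j,\,k_0}-\chi^W_{k_1+1+2j,\,l-2-k_0}\bigr),
\]
i.e.\ $u_n=1$ for $n\in\{k_1,k_1+2,\dots\}$, $v_n=-1$ for $n\in\{k_1+1,k_1+3,\dots\}$, and both vanish otherwise; once this is on the page the verification is a two-line calculation, and your detour through a recurrence, phrased as ``the required orthogonality forces the recurrence,'' reads as circular even though the underlying uniqueness argument can be made correct. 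Two smaller points: the initial condition $u_0=v_0=0$ is only right for $k_1>0$ (for $k_1=0$ one needs $u_0=1$), so you should state the regime; and your remarks on the boundary case and convergence are both correct and go a bit beyond what the paper writes --- for $k_0=l-1$ the displayed formula for $\eta$ does degenerate to zero and must be replaced by $\eta_{k_1,l-1}=\chi^W_{k_1,l-1}=\chi^T_{k_1,l-1}$, and $\eta_{k_1,k_0}$ indeed has simple poles on the unit circle so the pairing must be read formally through the Weyl-character expansion rather than as a convergent integral. These are worth flagging, but they do not change the fact that your argument and the paper's are the same computation in slightly different packaging.
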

\begin{proof} 
It is easy to find the decomposition of $\eta_{k_1,k_0}$ in the basis of Weyl modules:
\begin{equation*}
\eta_{k_1,k_0}=\sum_{j=0}^\infty (\chi^W_{k_1+2j,k_0}-\chi^W_{k_1+1+2j,l-2-k_0}).
\end{equation*}
Now let us use formulae expressing characters of tilting modules in terms of character of Weyl modules, the formula above and the orthogonality of characters of Weyl modules.
For $k_0\neq l-1$ and $k_1\in\mathbb{Z}_{>0}$ we have
\begin{eqnarray*}
(\chi^T_{m_1,m_0},\eta_{k_1,k_0})=\sum\limits_{j=0}^\infty((\chi^W_{m_1,m_0},\chi^W_{k_1+2j,k_0})+(\chi^W_{m_1-1,l-2-m_0},\chi^W_{k_1+2j,k_0})\\
-(\chi^W_{m_1,m_0},\chi^W_{k_1+1+2j,l-2-k_0})-(\chi^W_{m_1-1,l-2-m_0},\chi^W_{k_1+1+2j,l-2-k_0}))=\\
=\sum\limits_{j=k_1}^\infty(\delta_{m_1,k_1+2j}\delta_{m_0,k_0}+\delta_{m_1-1,k_1+2j}\delta_{l-2-m_0,k_0}-\\
-\delta_{m_1,k_1+1+2j}\delta_{m_0,l-2-k_0}-\delta_{m_1-1,k_1+1+2j}\delta_{l-2-m_0,l-2-k_0}).
\end{eqnarray*}
It is evident that this expression gives the right combination of Kroneker delta symbols:
\begin{eqnarray*}
	(\chi^T_{m_1,m_0},\eta_{m_1,k_0})=\delta_{m_0,k_0},
\end{eqnarray*}
and for $k_1<m_1$, again, $0$. To resume, we have
\begin{equation*}
	(\chi^T_{m_1,m_0},\eta_{k_1,k_0})=\delta_{m_0,k_0}\delta_{m_1,k_1},
\end{equation*}
\end{proof}

\subsection{Multiplicities}\label{integralchar}
Now we can use the basis $\eta_{k_1,k_0}$ to derive the integral formula for multiplicities of tilting modules. 

The tensor product decomposition, implies the equality of characters:
\begin{equation*}
(\mathrm{ch}_{T(1)}(x))^N=\sum_{0\leq k_0\leq l-1,\, k_1\in\mathbb{Z}_{\geq 0}}M^{(l)}_{k_1l+k_0}(N)\mathrm{ch}_{T(k_1l+k_0)}(x).
\end{equation*}
Now that we have found the orthogonal basis, we can use it to derive  the following integral formula for the multiplicity
\begin{equation*}
M^{(l)}_{k}(N)=\frac{1}{4\pi}\int_{0}^{2\pi}\overline{\eta_{k_1,k_0}(e^{i\theta})}(\mathrm{ch}_{T(1)}(e^{i\theta}))^N|e^{i\theta}-e^{-i\theta}|^2d\theta.
\end{equation*}

To compare this formula with Theorem \ref{comb_coef}, we use the definition of $\eta_{k_1,k_0}$, and decomposition into a sign-alternating sum of characters of Weyl modules. Using the orthogonality of Weyl characters we arrive to the expression
\begin{equation*}
M^{(l)}_{k}(N)=\sum_{j=0}^\infty (F_{k_1l+2jl+k_0}^{(N)}-F_{k_1l+2(j+1)l-2-k_0}^{(N)}),
\end{equation*}
where $F^{(N)}_m$ are the same as in (\ref{tkN}). 
Clearly, this agrees with Theorem \ref{comb_coef}. Moreover the analytic derivation above is more or less a rephrasing of the algebraic proof of this theorem.

\section{Probabilistic measures on the tensor product decomposition}\label{probmeas}
Here we introduce some natural probabilistic measures on indecomposable components of $T(1)^{\otimes N}$ for big and small quantum $\lie{sl}_2$ and we show that they are all induced by Markov processes. 
\subsection{The character and the Plancherel measures for $U_q(\lie{sl}_2)$ }
Let $X^{(l)}_N$ be the set of tilting modules of $U_q(\lie{sl}_2)$ that occur in the decomposition of $T(1)^{\otimes N}$. It can be naturally identified with the subset of dominant integral weights of $\lie{sl}_2$, i.e.  with $k\in \Z_{\geq 0}$ such that $k\leq N$, $k\equiv N\,{\rm mod}\,2$. Define the multiplicity function on $\Z_{\geq 0}$ as the multiplicity of $T(k)\subset T(1)^{\otimes N}$ when $k\in X^{(l)}_N$ and zero otherwise.

\begin{definition} The character measure on  $\Z_{\geq 0}$ is given by
\begin{equation*}
p^{(l)}_N(k;t)=\frac{M^{(l)}_{k}(N)\mathrm{ch}_{T(k)}(e^t)}{(\mathrm{ch}_{T(1)}(e^t))^N}.
\end{equation*}
Here $M^{(l)}_{k}(N)$ is the multiplicity function, $t$ is a real parameter and $\mathrm{ch}_{T(k)}(e^t)$ is
the formal character from Lemma \ref{fchar} evaluated on $e^t$. 
\end{definition}

Multiplicativity and additivity of characters imply that the character measure a probability measure, i.e. $\sum\limits_{k\in X^{(l)}_N} p^{(l)}_N(k;t)=1$. Recall that the multiplicity function is computed explicitly.
\begin{definition}\label{planchdef}
When $t=0$ the character measure becomes the Plancherel measure 
\begin{equation*}
p^{(l)}_N(k)=\frac{M^{(l)}_{k}(N)\dim T(k)}{(\dim  T(1))^N}.
\end{equation*}
\end{definition}

Dimensions of tilting modules can be easily derived from their formal characters:
\begin{equation*}
\dim T(k_0+lk_1)=
\begin{cases}
k_0+1,&\text{if $0\leq k_0\leq l-2$ and $k_1=0$,}\\
l(k_1+1), & \text{if $k_0=l-1$ and $k_1\in\mathbb{Z}_{>0}$,}\\
2k_1l,&\text{if $0\leq k_0\leq l-2$ and $k_1\in\mathbb{Z}_{>0}$.}
\end{cases}
\end{equation*}
It is clear that the Plancherel measure for $U_q(\lie{sl}_2)$ is the specialization of the character measure at $t=0$. We distinguish it because as we will see that limit $N\to \infty$ is not interchangable with the limit $t\to 0$.

\subsection{The Plancherel measure for  $u_q(\lie{sl}_2)$}\label{secplanch}
Now let us define the Plancherel measure on indecomposable components 
of $T(1)^{\otimes N}$ for $u_q(\lie{sl}_2)$. Recall that in Section \ref{small_lpm} to each node of a graph in Figure \ref{A-u-steps} we assigned a tilting module. From now on we will use notation corresponding to this assignment:
\begin{itemize}
    \item $T(k)=T_+(k)$ when $l-1 \leq k\leq 2l-2$ and $m^{(l)}_{k}(N)=m^{(l)}_{k, +}(N)$,
    \item $T(k)=T_-(k-l)$ when $2l-1 \leq k\leq 3l-2$ and $m^{(l)}_{k}(N)=m^{(l)}_{k-l, -}(N)$.
\end{itemize}

In this case, dimensions of indecomposable representations that occur in the decomposition of the tensor product $T(1)^{\otimes N}$ are written as follows
\begin{equation*}
\dim T(k)=
\begin{cases}
k+1,&\text{if $0 \leq k\leq l-2$,}\\
l, & \text{if $k=l-1,2l-1$,}\\
2l,& \text{otherwise.}
\end{cases}
\end{equation*}

\begin{definition}The Plancherel measure on indecomposable components of $u_q$-module $T(1)^{\otimes N}$ is 
\begin{equation*}
p^{(l)}_N(k)=\frac{m^{(l)}_{k}(N)\dim T(k)}{(\dim  T(1))^N},\quad 0\leq k\leq 3l-2
\end{equation*}
and is zero for $k>3l-2$.
\end{definition}

\subsection{The quantum Plancherel measure for $u_q(\lie{sl}_2)$.}\label{qdims}
Quantum dimension \cite{KR} of the irreducible $U_v(\lie{sl}_2)$-module $V(k)$ is
\[
\dim_v(V(k))=\frac{v^{k+1}-v^{-k-1}}{v-v^{-1}}.
\]
For quantum dimensions of modules that $T(k)$ which occur in the decomposition of $T(1)^{\otimes N}$ we have
\[
\dim_q(T(k))=\frac{q^{k+1}-q^{-k-1}}{q-q^{-1}}
\]
for $0\leq k\leq l-2$. For other modules $T(k)$ quantum dimensions are zero.

Note that when $q=e^{\frac{i\pi}{l}}$ we have $\dim_q(T(k))\in\mathbb{R}_{\geq 0}$ for arbitrary values of $k$ and $\dim_q(T(k))\in\mathbb{R}_{> 0}$ for $0\leq k\leq l-2$.\footnote{This positivity is closely related to the unitarity of Wess-Zumino-Witten conformal field theory and
of quantum Chern-Simons topological quantum field theory.}. It is not
difficult to show that this is the only root of degree $l$ from $-1$ with such property.

\begin{definition}
Quantum Plancherel measure is defined as
\begin{equation*}
p^{(l)}_N(k)=\frac{m^{(l)}_{T(k)}(N)\mathrm{dim}_qT(k)}{(\mathrm{dim}_q T(1))^N},\quad 0\leq k\leq l-2
\end{equation*}
and is zero for $k>l-2$.
\end{definition}

\section{The asymptotic of the character and Plancherel measures for $U_q(\lie{sl}_2)$}\label{as9}
In this section we will compute the asymptotic of the character and the Plancherel measures on tilting submodules of $T(1)^{\otimes N}$ when $N\to \infty$. We will start with computing the asymptotic of the multiplicity of $T(k)\subset T(1)^{\otimes N}$.
\subsection{The asymptotic of the multiplicity function}
Note that in the tensor product $T(1)^{\otimes N}$ only submodules $T(k)$ with $k\equiv N \,\mathrm{mod}\,2$ can occur. For sufficiently large $k$ the decomposition of $T(1)\otimes T(k)$ is $2l$ periodic. This is why it is natural to write $N$ in base $2l$:
$$
N=2lN_1+N_0,\quad N_0=0,1,\ldots,2l-1.
$$
Writing $k=lk_1+k_0$ we see that $N=k\,\mathrm{mod}\,2$ have the following implication:
\begin{itemize}
    \item $N_0$ is even and $k\equiv 0\,\mathrm{mod}\,2$ or $N_0$ is odd and $k\equiv 1\,\mathrm{mod}\,2$ imply $k_0\equiv N_0\,\mathrm{mod}\,2$
    \item $N_0$ is even and $k\equiv 1\,\mathrm{mod}\,2$ or $N_0$ is odd and $k\equiv 0\,\mathrm{mod}\,2$ imply $k_0\equiv (N_0+1)\,\mathrm{mod}\,2$
\end{itemize}
Let us define a parameter
$$
\gamma=
\begin{cases}
1,&\text{if $k_0\equiv (N_0+1)\,\mathrm{mod}\,2$,}\\
0,&\text{if $k_0\equiv N_0\,\mathrm{mod}\,2$.}
\end{cases}
$$
In the limit $N\to\infty$ $k_0$ remains discrete, and its parity is controlled by $\gamma$. Later we will see that limiting distribution is independent of both $N_0$ and $\gamma$.

The following theorem describes the pointwise asymptotic of the multiplicity function
\begin{theorem}\label{multasymptmain}
	Assume that $N=2lN_1+N_0$, $k=k_1l+k_0$, $N_1\to\infty$, $k_1\to\infty$ such that $l$, $N_0=0, \ldots, 2l-1$, $k_0=0, \ldots, l-1$ and $\xi=\frac{k_1}{2N_1}$ remain fixed. Then the asymptotic of the multiplicity has the following form:
    \begin{equation}\label{mass}
        M^{(l)}_{T(k)}(N)=\frac{\mu_{k_0}^{(l)}(\xi,N_0,\gamma)}{\sqrt{2\pi\, 2lN_1}}\mathrm{exp}(2lN_1 S(\xi))\left(1+O\left(\frac{1}{N_1}\right)\right)
    \end{equation}
    where
	\begin{equation*}
		S(\xi)=-\left(\frac{1-\xi}{2}\right)\mathrm{ln}\left(\frac{1-\xi}{2}\right)-\left(\frac{1+\xi}{2}\right)\mathrm{ln}\left(\frac{1+\xi}{2}\right),
	\end{equation*}
    and if $k_0\neq l-1$
    \begin{equation*}
		\mu_{k_0}^{(l)}(\xi,N_0,\gamma)=\frac{2^{2+N_0}\xi}{\sqrt{(1-\xi^2)^{1+N_0}}}\Big(\frac{1-\xi}{1+\xi}\Big)^{\frac{\gamma l}{2}}\frac{\frac{1}{1+\xi}\Big(\frac{1+\xi}{1-\xi}\Big)^{\frac{l-k_0}{2}}-\frac{1}{1-\xi}\Big(\frac{1+\xi}{1-\xi}\Big)^{-\frac{l-k_0}{2}}}{\Big(\frac{1+\xi}{1-\xi}\Big)^{\frac{l}{2}}-\Big(\frac{1+\xi}{1-\xi}\Big)^{-\frac{l}{2}}},
    \end{equation*}
    if $k_0=l-1$
    \begin{equation*}
		\mu_{k_0}^{(l)}(\xi,N_0,\gamma)=\frac{2^{2+N_0}\xi}{\sqrt{(1-\xi^2)^{1+N_0}}}\frac{1}{1+\xi}\Big(\frac{1-\xi}{1+\xi}\Big)^{\frac{l(1+\gamma)-1}{2}}.
    \end{equation*}
\end{theorem}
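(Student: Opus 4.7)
The natural starting point is the exact formula of Theorem \ref{comb_coef}, which writes $M^{(l)}_{k_1 l + k_0}(N)$ as a finite alternating sum of classical $\mathfrak{sl}_2$ multiplicities $F^{(N)}_{k'}$. Into each term I would insert the standard large-deviations asymptotic coming from Stirling,
\[
F^{(N)}_{k'} \sim \frac{4\, (k'/N)}{(1+k'/N)\sqrt{2\pi N (1-(k'/N)^2)}}\, e^{N S(k'/N)}\qquad (k'/N\equiv\xi'),
\]
uniformly for $\xi'$ bounded away from $\pm 1$. The key structural observation is that in both sums of Theorem \ref{comb_coef} the running index $k'$ satisfies $k' - N\xi = 2lm + k_0 - N_0\xi$ or $2l(n+1) - k_0 - 2 - N_0\xi$, i.e.\ it differs from $N\xi$ by a bounded affine function of the summation index. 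Consequently a first-order Taylor expansion of the exponent is uniformly valid:
\[
N S(k'/N) = N S(\xi) + (k' - N\xi)\, S'(\xi) + O(1/N_1),
\]
and produces a geometric weight $r^{k'-N\xi}$ per term, with $r = e^{S'(\xi)} = \sqrt{(1-\xi)/(1+\xi)}$.

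Since $0<r<1$ whenever $\xi\in(0,1)$ and the nominal upper limits of both sums grow like $N_1(1-\xi)$, the tails decay as $r^{2ml}$ and can be extended to $m,n=\infty$ at the price of an exponentially small error. The two sums then collapse to geometric series with ratio $r^{2l}$, and their difference is
\[
\frac{r^{k_0}-r^{2l-k_0-2}}{1-r^{2l}}\cdot r^{-N_0\xi},
\]
which, after the substitution $s=r^{-1}$ and multiplication by $r^l/r^l$, telescopes precisely into the hyperbolic-function ratio $\frac{(1+\xi)^{-1} s^{l-k_0}-(1-\xi)^{-1} s^{-(l-k_0)}}{s^l - s^{-l}}$ appearing in $\mu^{(l)}_{k_0}$. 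The leftover scalar prefactor is tidied by the algebraic identity
\[
S(\xi) - \xi\, S'(\xi) = \ln\!\bigl(2/\sqrt{1-\xi^2}\bigr),
\]
which converts $e^{N_0 S(\xi)}\, r^{-N_0\xi}$ into $2^{N_0}/(1-\xi^2)^{N_0/2}$ and therefore assembles, together with the leading $4\xi/((1+\xi)\sqrt{1-\xi^2})$ coming from the Stirling prefactor, into the advertised factor $2^{2+N_0}\xi/(1-\xi^2)^{(1+N_0)/2}$; the $\sqrt{2\pi\cdot 2lN_1}$ in the denominator comes from $\sqrt{2\pi N}=\sqrt{2\pi\cdot 2lN_1}(1+O(1/N_1))$. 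The case $k_0 = l-1$ is handled separately because by part (a) of Theorem \ref{comb_coef} the multiplicity collapses to the single term $F^{(N)}_{(k_1+1)l-1}$, and a single application of Stirling with $k'-N\xi = l-1-N_0\xi$ gives the formula.

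The main obstacle, as I see it, is an accurate identification of the parity-dependent factor $((1-\xi)/(1+\xi))^{\gamma l/2}$. Since $\gamma$ records the parity of $k_1$ (forced by $k\equiv N\pmod 2$ to satisfy $k_1\equiv N_0-k_0\pmod 2$), this subleading piece must arise from a careful accounting of the discrete mismatch between the nominal value $\xi=k_1/(2N_1)$ and the actual ratio $k/N$, or equivalently from controlling the $O(1/N_1)$ correction in the Taylor expansion uniformly in the parity class of $k_1$. I would attack this by refining the Stirling expansion to a second-order saddle estimate around $\xi_0 = k/N$ and then re-expressing the answer in terms of $\xi$, which should exhibit the half-integer shift needed to produce the $r^{\gamma l}$ factor. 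The bookkeeping is standard but delicate; once that is in place the remaining routine is to verify the $(1+O(1/N_1))$ error bound by estimating the truncated geometric tail and the quadratic remainder in $N S(\xi')$ using $|S''(\xi)|\leq 1/(1-\xi^2)$.
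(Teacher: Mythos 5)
Your route is the same as the paper's: take the exact alternating sum of Theorem \ref{comb_coef}, insert Stirling asymptotics for the $F^{(N)}_{k'}$, Taylor-expand the exponent $NS(k'/N)$ to first order around a fixed reference slope, and collapse the two tails into geometric series with ratio $r^{2l}$, $r=e^{S'(\xi)}=\sqrt{(1-\xi)/(1+\xi)}$. Your identity $S(\xi)-\xi\,S'(\xi)=\ln\bigl(2/\sqrt{1-\xi^2}\bigr)$ and your bookkeeping of the $N_0$-dependent prefactor are exactly what the appendix does (in slightly different notation, working with $\epsilon_{\pm}$). So the mechanism is right, and the body of the argument has no gap.

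Where you go wrong is in diagnosing your own proof. The factor $\bigl(\tfrac{1-\xi}{1+\xi}\bigr)^{\gamma l/2}$ is not a missing second-order correction and does not require a refined saddle estimate: it is a pure artifact of which reference slope one expands around. The appendix writes $k_1=2s+\gamma$ and, despite the theorem's declaration $\xi=k_1/(2N_1)$, carries out the entire Stirling expansion with $k_1l=2\xi lN_1+\gamma l$, i.e.\ with $\xi=s/N_1=(k_1-\gamma)/(2N_1)$. With that choice the quantity $k'-N\xi$ for the $m$-th term of the first sum equals $\gamma l + 2ml + k_0 - N_0\xi$, so the $\gamma l$ survives in the exponent and becomes the explicit $r^{\gamma l}$ factor. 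With your choice $\xi=k_1/(2N_1)$ the same bookkeeping gives $k'-N\xi=2ml+k_0-N_0\xi$ and no $\gamma$ appears; the difference is absorbed into $e^{2lN_1 S(\xi)}$ because the two slopes differ by $\gamma/(2N_1)$, and $2lN_1\bigl(S(\xi)-S(\xi-\tfrac{\gamma}{2N_1})\bigr)=\gamma l\,S'(\xi)+O(1/N_1)$, whose exponential is exactly $\bigl(\tfrac{1+\xi}{1-\xi}\bigr)^{\gamma l/2}$. Thus your formula $\mu^{(l)}_{k_0}(\xi)\bigl/\sqrt{2\pi\cdot2lN_1}\cdot e^{2lN_1 S(\xi)}$ with $\xi=k_1/(2N_1)$ and no $\gamma$ factor agrees with the appendix to within $(1+O(1/N_1))$; the theorem's statement and the appendix proof are simply using two slightly different conventions for $\xi$. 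Dropping the speculation about a second-order saddle and instead adding the two-line remark that the $\gamma$ factor is a parameterization effect (which can be verified by comparing your $\mu_0$ against the paper's $\mu_\gamma$ after also checking that your $\tfrac{1}{1+\xi}\bigl(s^{l-k_0}-s^{k_0+2-l}\bigr)$ telescopes to the asymmetric $\tfrac{1}{1+\xi}s^{l-k_0}-\tfrac{1}{1-\xi}s^{-(l-k_0)}$ via $s^2=(1+\xi)/(1-\xi)$) would make your argument complete and, if anything, slightly cleaner than the paper's.
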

The proof of this theorem is presented in Appendix \ref{apmultas}.

\subsection{The pointwise asymptotic of the character measure density}
Now let us find the asymptotic of the character measure density function $p^{(l)}_N(k;t)$ when $k$, $N\to\infty$ in the same way as in Theorem \ref{multasymptmain}. All we have to do is to combine the results of Theorem \ref{multasymptmain} with the asymptotical expansions of characters.

Using explicit formulae for characters we find that when $k_0\neq l-1$
\begin{eqnarray*}
	\frac{\mathrm{ch}_{T(k)}(e^t)}{(\mathrm{ch}_{T(1)}(e^t))^N}=\frac{(e^{t(k_0+1)}+e^{-t(k_0+1)})(e^{tk_1l}-e^{-tk_1l})}{e^{t}-e^{-t}}\Big(\frac{e^{2t}-e^{-2t}}{e^{t}-e^{-t}}\Big)^{-N}=\\
	=\frac{(e^{t(k_0+1)}+e^{-t(k_0+1)})e^{t \gamma l}}{(e^{t}-e^{-t})(e^t+e^{-t})^{N_0}}\Big(\frac{e^{\xi t}}{e^{t}+e^{-t}}\Big)^{2lN_1}(1+o(1)),\nonumber
\end{eqnarray*}
and when $k_0=l-1$
\begin{eqnarray*}
	\frac{\mathrm{ch}_{T(lk_1+l-1)}(e^t)}{(\mathrm{ch}_{T(1)}(e^t))^N}=\frac{e^{tl(1+\gamma)}}{(e^{t}-e^{-t})(e^{t}+e^{-t})^{N_0}}\Big(\frac{e^{\xi t}}{e^{t}+e^{-t}}\Big)^{2lN_1}(1+o(1)).
\end{eqnarray*}

Combining these formulae with the results of Theorem \ref{multasymptmain} we obtain that if $N=2N_1l+N_0$, $k=k_1l+k_0$, and $N_1$, $k_1\to\infty$ such that $l$, $k_0$, $N_0$, $t$ and $\xi=\frac{k_1}{2N_1}$ remain fixed we have the following pointwise asymptotic for the probability measure density function
\begin{equation}\label{charas}
    p^{(l)}_N(k;t)=\frac{\nu_{k_0}^{(l)}(\xi,N_0,\gamma;t)}{\sqrt{2\pi\, 2lN_1}}\mathrm{exp}(2lN_1 (S(\xi)-f(t)+t\xi))\left(1+O\left(\frac{1}{N_1}\right)\right),
\end{equation}
where
\begin{equation*}
	f(t)=\mathrm{ln}(e^t+e^{-t}),
\end{equation*}
and if $k_0\neq l-1$
\begin{eqnarray*}
    &\nu_{k_0}^{(l)}(\xi,N_0,\gamma;t)=\frac{e^{t\gamma l}(e^{t(k_0+1)}+e^{-t(k_0+1)})}{(e^{t}-e^{-t})(e^t+e^{-t})^{N_0}}\frac{2^{2+N_0}\xi}{\sqrt{ (1-\xi^2)^{1+N_0}}}\Big(\frac{1-\xi}{1+\xi}\Big)^{\frac{\gamma l}{2}}\frac{\frac{1}{1+\xi}\Big(\frac{1+\xi}{1-\xi}\Big)^{\frac{l-k_0}{2}}-\frac{1}{1-\xi}\Big(\frac{1+\xi}{1-\xi}\Big)^{-\frac{l-k_0}{2}}}{\Big(\frac{1+\xi}{1-\xi}\Big)^{\frac{l}{2}}-\Big(\frac{1+\xi}{1-\xi}\Big)^{-\frac{l}{2}}},
\end{eqnarray*}
if $k_0=l-1$
\begin{equation*}
    \nu_{k_0}^{(l)}(\xi,N_0,\gamma;t)=\frac{e^{tl(1+\gamma)}}{(e^{t}-e^{-t})(e^t+e^{-t})^{N_0}}\frac{2^{2+N_0}\xi}{\sqrt{(1-\xi^2)^{1+N_0}}} \frac{1}{1+\xi}\Big(\frac{1-\xi}{1+\xi}\Big)^{\frac{l(1+\gamma)-1}{2}}.
\end{equation*}

By definition $\xi\in[0,1)$. Note that the function $S(\xi)$, the large deviation rate function has its maximum at the isolated critical point $\xi_0$, i.e.
\begin{equation*}
\frac{\partial}{\partial\xi}(S(\xi)-f(t)+t\xi)=0
\end{equation*}
has a unique solution
\begin{equation}\label{xi0}
\xi_0=\frac{e^t-e^{-t}}{e^t+e^{-t}}.
\end{equation}
In addition, $S(\xi_0)=0$.

Thus, we have a typical large deviation behavior of the density function $p^{(l)}_N(k;t)$: it is exponentially decaying when $\xi\neq\xi_0$ and it decays as $N_1^{-\frac{1}{2}}$ when $\xi=\xi_0$. Thus, asymptotically, the character measure concentrates in the neighborhood of $\xi=\xi_0$.

\subsection{The asymptotic of the character measure for $U_q(\lie{sl}_2)$}\label{aschar}
The following theorem describes the weak limit of the character probability measure for $U_q(\lie{sl}_2)$.
\begin{theorem}\label{theomch}
    Let $\xi_0$ be as in (\ref{xi0}), $N=2lN_1+N_0$ and $N_1\to\infty$ such that $N_0$, $l$, $t$ are fixed. Then for $k=k_1l+k_0$, random variables $k_0$ and $\alpha=\frac{k_1-2N_1 \xi_0}{\sqrt{2N_1}}$ converge to random variables on $\{0,\ldots,l-1\}\times\mathbb{R}$ and the character probability distribution weakly converges to the tensor product of two distributions $p_1(\bullet;t)\otimes p_2(\bullet;t)$ on $\{0,\ldots,l-1\}\times\mathbb{R}$. The distribution $p_1(\bullet;t)$ is a discrete probability distribution on $k_0=0,1,\ldots,l-1$ with density given by
	\begin{equation}\label{p1}
		p_1(k_0;t)=\begin{cases}
			\frac{(e^{t(k_0+1)}+e^{-t(k_0+1)})(e^{t(l-1-k_0)}-e^{-t(l-1-k_0)})}{l(e^{tl}-e^{-tl})},& \textit{if $k_0=0,\ldots,l-2$},\\
			\frac{1}{l},& \textit{if $k_0=l-1$}.
		\end{cases}
	\end{equation}
	The distribution $p_2(\bullet;t)$ is a Gaussian distribution on $\mathbb{R}$ with density given by
	\begin{equation}\label{p2}
		p_2(\alpha;t)=\sqrt{\frac{l}{2\pi}}\mathrm{cosh}(t)e^{-\frac{\alpha^2l}{2}\left(\mathrm{cosh}(t)\right)^2}.
	\end{equation}
\end{theorem}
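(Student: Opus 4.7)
The plan is to combine the pointwise large-deviation asymptotic (\ref{charas}) of the character measure density with a Laplace-type expansion around the unique interior maximum $\xi_0=\tanh(t)$ of the rate function $G(\xi):=S(\xi)-f(t)+t\xi$. Direct differentiation of the definition of $S$ gives $G(\xi_0)=0$, $G'(\xi_0)=0$, and $G''(\xi_0)=-1/(1-\xi_0^2)=-\cosh^2(t)$, so substituting $\xi=\xi_0+\alpha/\sqrt{2N_1}$ into (\ref{charas}) produces the Gaussian factor
\[
\exp\bigl(2lN_1\,G(\xi)\bigr)=\exp\!\left(-\frac{l\cosh^2(t)}{2}\alpha^2\right)(1+o(1)),
\]
whose variance $1/(l\cosh^2(t))$ already matches $p_2(\alpha;t)$.

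The next step is to convert discrete point probabilities into a Lebesgue density in $\alpha$. The parity constraint $k\equiv N\pmod{2}$ forces consecutive admissible values of $k_1$ (for fixed $k_0$, $N_0$, $\gamma$) to differ by $2$, since $l$ is odd; hence consecutive values of $\alpha$ are spaced by $\Delta\alpha=2/\sqrt{2N_1}$. Dividing (\ref{charas}) by $\Delta\alpha$ absorbs the $\sqrt{2lN_1}$ in the denominator and yields the pointwise limit
\[
\frac{p_N^{(l)}(k;t)}{\Delta\alpha}\longrightarrow\frac{\nu_{k_0}^{(l)}(\xi_0,N_0,\gamma;t)}{2\sqrt{2\pi l}}\;\exp\!\left(-\frac{l\cosh^2(t)}{2}\alpha^2\right).
\]
I would then evaluate $\nu_{k_0}^{(l)}$ at $\xi_0$ using $(1+\xi_0)/(1-\xi_0)=e^{2t}$, $1-\xi_0^2=4/(e^t+e^{-t})^2$, and $((1-\xi_0)/(1+\xi_0))^{\gamma l/2}=e^{-t\gamma l}$. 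The factor $e^{t\gamma l}$ appearing in $\nu_{k_0}^{(l)}$ cancels the last, making the limit independent of $\gamma$; the combination $2^{2+N_0}\xi_0/\sqrt{(1-\xi_0^2)^{1+N_0}}$ simplifies to $2(e^t-e^{-t})(e^t+e^{-t})^{N_0}$, cancelling the $N_0$-dependent denominator. Clearing $\tfrac{e^{t(l-k_0)}}{1+\xi_0}-\tfrac{e^{-t(l-k_0)}}{1-\xi_0}$ over the common denominator $1-\xi_0^2$ produces $\cosh(t)(e^{t(l-1-k_0)}-e^{-t(l-1-k_0)})$, leading to
\[
\nu_{k_0}^{(l)}(\xi_0,N_0,\gamma;t)=\frac{2\cosh(t)(e^{t(k_0+1)}+e^{-t(k_0+1)})(e^{t(l-1-k_0)}-e^{-t(l-1-k_0)})}{e^{tl}-e^{-tl}}
\]
for $0\le k_0\le l-2$; the $k_0=l-1$ case collapses analogously to $2\cosh(t)$. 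Comparing with (\ref{p1}), in every case $\nu_{k_0}^{(l)}(\xi_0,N_0,\gamma;t)=2l\cosh(t)\,p_1(k_0;t)$, and substitution into the display above reproduces exactly the tensor-product density $p_1(k_0;t)\,p_2(\alpha;t)$.

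To upgrade pointwise convergence to weak convergence of the joint distribution of $(k_0,\alpha)$, I would test against an arbitrary bounded continuous $\varphi$ and split $\sum_{k_0,k_1}p_N^{(l)}(k;t)\varphi(k_0,\alpha(k_1))$ into a bulk $|\alpha|\le R$ and a tail $|\alpha|>R$, then send first $N_1\to\infty$ and afterwards $R\to\infty$. On the bulk the local limit theorem converts the Riemann sum into $\sum_{k_0}\int p_1(k_0;t)p_2(\alpha;t)\varphi(k_0,\alpha)\,d\alpha$; on the tail the strict concavity of $G$ at $\xi_0$ furnishes a uniform large-deviation bound $G(\xi)\le -c(R)<0$, so the tail contribution is exponentially small in $N_1$. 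The main technical obstacle is establishing uniformity of the remainder in Theorem~\ref{multasymptmain}: the $O(1/N_1)$ error in (\ref{mass}) and (\ref{charas}) must hold uniformly for $|\xi-\xi_0|\le CN_1^{-1/2+\varepsilon}$ (not merely pointwise), so that the substitution $\xi=\xi_0+\alpha/\sqrt{2N_1}$ is legitimate for $\alpha$ in an arbitrary fixed compact set; this requires inspecting the constants in the Stirling-type expansion underlying Theorem~\ref{multasymptmain} together with the continuity of $\nu_{k_0}^{(l)}(\cdot,N_0,\gamma;t)$ at $\xi_0$. Once this uniformity is in place, consistency of the two marginals (the sum of $p_1(k_0;t)$ over $0\le k_0\le l-1$ telescopes to $1$, and $p_2$ integrates to $1$ by construction) closes the argument.
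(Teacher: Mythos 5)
Your proposal is correct and follows essentially the same route as the paper's own argument in Appendix~\ref{approofchar}: expand the pointwise asymptotic (\ref{charas}) around the interior maximizer $\xi_0=\tanh(t)$ of $S(\xi)-f(t)+t\xi$, evaluate $\nu_{k_0}^{(l)}$ at $\xi_0$ to obtain $2l\cosh(t)\,p_1(k_0;t)$, and interpret the sum over $k_1$ (spacing $\Delta\alpha=\sqrt{2/N_1}$) as a Riemann sum converging to $p_1\otimes p_2$. Your bulk/tail split with the uniform large-deviation bound for $|\alpha|>R$ and the remark on uniformity of the $O(1/N_1)$ remainder supply exactly the ``necessary estimates'' that the paper explicitly says it omits, so if anything your version is a bit more complete on the weak-convergence step.
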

The proof of this theorem is presented in Appendix \ref{approofchar}. The meaning of weak convergence is also recalled there.

Note that the asymptotic of the character measure is derived for $N_0$ being fixed, but it does not depend on $N_0$. The only dependence is the condition $N=k\,\mathrm{mod}\,2$. It implies that:
\begin{itemize}
    \item for even $N_0$ the summation over $k=k_1 l+k_0$ amounts for the summation over $k_0$ and $k_1$ of the same parity,
    \item for odd $N_0$ the summation over $k=k_1 l+k_0$ amounts for the summation over $k_0$ and $k_1$ of the opposite parity.
\end{itemize}
In the limit $N_1\to\infty$ this dependence on $N_0$ disappears. Dependence on $\gamma$ disappears as well. Period $2l$ covers $k_1$, $k_1+1$, which for any $k_1$ includes all values of $k_0=0,\ldots,l-1$, so parity of $k_0$ doesn't have to be controlled.

This is why we conjecture that the asymptotic does not depend on $N_0$. To be precise, consider a sequence
$$
N^{(p)}=2lN_1^{(p)}+N_0^{(p)},\quad N_1^{(p)}\to\infty\text{ as }p\to\infty,
$$
and $N_0^{(p)}=0,1,\ldots,2l-1$. For each $p\in\mathbb{Z}_{>0}$ let $k=lk_1+k_0\in\mathbb{Z}_{\geq 0}$ be the random variable distributed with respect to the character measure $p_{N^{(p)}}^{(l)}(k;t)$. We expect the following is true. As $p\to\infty$ with $l$, $t$ being fixed and $k=2N_1^{(p)}\xi_0+\sqrt{2N_1^{(p)}}\alpha$, $\alpha\in\mathbb{R}$, the character measure converges to the measure on $\{0,\ldots,l-1\}\times\mathbb{R}$ with the density $p_1(k_0;t)p_2(\alpha;t)$ as above, and the limit does not depend on the choice of sequences $\{N_1^{(p)}\}_{p\in\mathbb{Z}_{\geq 0}}$, $\{N_0^{(p)}\}_{p\in\mathbb{Z}_{\geq 0}}$.

\subsection{The asymptotic of the Placherel measure}
Now let us find the asymptotic of the Plancherel measure when $N\to\infty$ and $l$ is fixed. From definition \ref{planchdef} it is evident that the same ideas as explored in Subsection \ref{aschar} are applicable to the Plancherel measure.

In the considered limit we have the following asymptotical expression for dimensions when $k_0\neq l-1$
\begin{equation*}
\frac{\dim T(k)}{(\dim  T(1))^N}=\frac{2^{2-N_0}\xi l N_1}{2^{2lN_1}}\left(1+O\left(\frac{1}{N_1}\right)\right),
\end{equation*}
and when $k_0=l-1$
\begin{equation*}
\frac{\dim T(lk_1+l-1)}{(\dim  T(1))^N}=\frac{2^{1-N_0}\xi l N_1}{2^{2lN_1}}\left(1+O\left(\frac{1}{N_1}\right)\right).
\end{equation*}

We obtain the following pointwise asymptotic for the probability measure distribution
\begin{equation}\label{Planchas}
    p^{(l)}_N(k)=\frac{\nu^{(l)}_{k_0}(\xi,N_0,\gamma)}{\sqrt{2\pi\,2lN_1}}\mathrm{exp}(2lN_1(S(\xi)-\mathrm{ln}2))\left(1+O\left(\frac{1}{N_1}\right)\right),
\end{equation}
where if $k_0\neq l-1$
\begin{equation*}
    \nu^{(l)}_{k_0}(\xi,N_0,\gamma)=\frac{16\xi^2lN_1}{\sqrt{(1-\xi^2)^{1+N_0}}}\Big(\frac{1-\xi}{1+\xi}\Big)^{\frac{\gamma l}{2}}\frac{\frac{1}{1+\xi}\Big(\frac{1+\xi}{1-\xi}\Big)^{\frac{l-k_0}{2}}-\frac{1}{1-\xi}\Big(\frac{1+\xi}{1-\xi}\Big)^{-\frac{l-k_0}{2}}}{\Big(\frac{1+\xi}{1-\xi}\Big)^{\frac{l}{2}}-\Big(\frac{1+\xi}{1-\xi}\Big)^{-\frac{l}{2}}},
\end{equation*}
and if $k_0=l-1$
\begin{equation*}
    \nu^{(l)}_{k_0}(\xi,N_0,\gamma)=\frac{8\xi^2 lN_1}{\sqrt{(1-\xi^2)^{1+N_0}}}\frac{1}{1+\xi}\Big(\frac{1-\xi}{1+\xi}\Big)^{\frac{l(1+\gamma)-1}{2}}
\end{equation*}

Similarly to the character measure, the large deviation rate function $S(\xi)$ for the Plancherel measure has its maximum at the isolated critical point $\xi_0$, i.e.
\begin{equation*}
\frac{\partial}{\partial\xi}(S(\xi)-\mathrm{ln}2)=0
\end{equation*}
has a unique solution $\xi_0=0$, where the Plancherel measure concentrates asymptotically. Thus, we have the following theorem.
\begin{theorem}\label{planchfluct}
    Let $N=2lN_1+N_0$ and $N_1\to\infty$ such that $N_0$, $l$ are fixed. Then for $k=k_1l+k_0$, random variables $k_0$ and $\alpha=\frac{k_1}{\sqrt{2N_1}}$ converge to random variables on $\{0,\ldots,l-1\}\times \mathbb{R}_{\geq 0}$ and the Plancherel probability distribution weakly converges to the tensor product of two distributions $p_1(\bullet)\otimes p_2(\bullet)$ on $\{0,\ldots,l-1\}\times \mathbb{R}_{\geq 0}$. The distribution $p_1(\bullet)$ is a discrete probability distribution on $k_0=0,1,\ldots,l-1$ with density given by
	\begin{equation*}
	p_1(k_0)=\begin{cases}
	\frac{2(l-1-k_0)}{l^2},& \textit{if $k_0=0,\ldots,l-2$},\\
	\frac{1}{l},& \textit{if $k_0=l-1$}
	\end{cases},
	\end{equation*}
	The distribution $p_2(\bullet)$ is a radial part of the Gaussian distribution on $\mathbb{R}^3$ with density given by
    \begin{equation*}
    p_2(\alpha)=\sqrt{\frac{2}{\pi}}l^{\frac{3}{2}}\alpha^2e^{-\frac{\alpha^2 l}{2}}.
    \end{equation*}
\end{theorem}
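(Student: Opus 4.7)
The plan is to derive the weak limit directly from the pointwise asymptotic (\ref{Planchas}) of the Plancherel density, following the same strategy as in the proof of Theorem \ref{theomch} (cf.\ Appendix \ref{approofchar}). The first observation is that the large deviation rate function $S(\xi)-\ln 2$ attains its unique maximum $0$ at the isolated critical point $\xi_0=0$, with $S'(0)=0$ and $S''(0)=-1$; hence $S(\xi)-\ln 2=-\xi^2/2+O(\xi^4)$. This forces the fluctuation scale $\xi=\alpha/\sqrt{2N_1}$, equivalently $k_1=\sqrt{2N_1}\,\alpha$, under which the exponential factor in (\ref{Planchas}) satisfies $2lN_1(S(\xi)-\ln 2)\to -l\alpha^2/2$, producing the Gaussian factor present in $p_2(\alpha)$.

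Next I would Taylor expand the prefactor $\nu^{(l)}_{k_0}(\xi,N_0,\gamma)$ near $\xi=0$. For $k_0\neq l-1$ this is the delicate step, as the ratio
\begin{equation*}
\frac{\frac{1}{1+\xi}\bigl(\tfrac{1+\xi}{1-\xi}\bigr)^{(l-k_0)/2}-\frac{1}{1-\xi}\bigl(\tfrac{1+\xi}{1-\xi}\bigr)^{-(l-k_0)/2}}{\bigl(\tfrac{1+\xi}{1-\xi}\bigr)^{l/2}-\bigl(\tfrac{1+\xi}{1-\xi}\bigr)^{-l/2}}
\end{equation*}
is of the form $0/0$; L'Hopital (or direct Taylor expansion using $\ln\tfrac{1+\xi}{1-\xi}\approx 2\xi$) shows that the numerator behaves like $2(l-k_0-1)\xi$ and the denominator like $2l\xi$, so the ratio tends to $(l-k_0-1)/l$. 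Together with the $16\xi^2 l N_1$ factor and $\xi^2=\alpha^2/(2N_1)$, this gives $\nu^{(l)}_{k_0}\to 8\alpha^2(l-k_0-1)$. For $k_0=l-1$ the simpler expression yields $\nu^{(l)}_{l-1}\to 4l\alpha^2$ directly. In both cases the auxiliary factors involving $N_0$ and $\gamma$ reduce to $1$, explaining why the limit is independent of these parameters.

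The final step is to translate pointwise densities into the limiting joint distribution of $(k_0,\alpha)$. The parity constraint $k\equiv N \pmod{2}$ with $N_0$ and $k_0$ fixed restricts $k_1$ to one residue class modulo $2$, so the effective spacing is $2$ and the continuous density in $\alpha$ is obtained by multiplying $p^{(l)}_N(k)$ by $\sqrt{2N_1}/2$. A direct calculation from the expressions above gives $\tfrac{2\sqrt{2}(l-k_0-1)}{\sqrt{\pi l}}\,\alpha^2 e^{-l\alpha^2/2}$ for $k_0\neq l-1$ and $\sqrt{2l/\pi}\,\alpha^2 e^{-l\alpha^2/2}$ for $k_0=l-1$, which are precisely $p_1(k_0)\,p_2(\alpha)$ in the two cases, confirming asymptotic independence of $k_0$ and $\alpha$. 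The main obstacle is promoting this pointwise convergence to weak convergence on $\{0,\ldots,l-1\}\times\mathbb{R}_{\geq 0}$; tightness follows from (\ref{Planchas}) since $S(\xi)-\ln 2<0$ strictly away from $\xi_0$ forces exponential decay of mass outside any window $|\alpha|\leq A$, and the uniform control of the error $O(1/N_1)$ on compact $\xi$-intervals coming from the Stirling analysis behind Theorem \ref{multasymptmain} justifies the Riemann-sum approximation by a standard dominated convergence argument analogous to the one in Appendix \ref{approofchar}.
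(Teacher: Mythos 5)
Your proposal is correct and follows essentially the same route as the paper's proof in Appendix \ref{approofplanch}: expand the exponent $2lN_1(S(\xi)-\ln 2)$ and the prefactor $\nu^{(l)}_{k_0}$ of (\ref{Planchas}) around the critical point $\xi_0=0$ at the fluctuation scale $\xi=\alpha/\sqrt{2N_1}$, verify the resulting pointwise density factors as $p_1(k_0)p_2(\alpha)\sqrt{2/N_1}$, and then pass to weak convergence via the Riemann sum with spacing $\Delta k_1=2$. Your explicit resolution of the $0/0$ ratio in $\nu^{(l)}_{k_0}$ (giving $(l-k_0-1)/l$) and your remark on tightness merely spell out steps the paper leaves implicit.
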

The proof of this theorem is presented in Appendix \ref{approofplanch}.

This asymptotic does not depend on $N_0$, $\gamma$. This is why we expect that the theorem holds for any sequence $\{N^{(p)}\}_{p\in\mathbb{Z}_{\geq 0}}$ with $N^{(p)}\to\infty$ as $p\to\infty$.

\subsection{The near Plancherel asymptotic of the character measure}
Now consider the character measure but assume that $t=\frac{u}{\sqrt{2N_1}}$ as $N_1\to\infty$. In this case the character measure weakly converges to a deformation of a limiting measure obtained in Theorem \ref{planchfluct}.
\begin{theorem}\label{intermsc}
    Let $N=2lN_1+N_0$ and $N_1\to\infty$ such that $N_0$, $l$ are fixed and $t=\frac{u}{\sqrt{2N_1}}$. Then for $k=k_1l+k_0$, random variables $k_0$ and $b=\frac{k_1}{\sqrt{2N_1}}$ converge to random variables on $\{0,\ldots,l-1\}\times \mathbb{R}_{\geq 0}$ and the character probability distribution weakly converges to the tensor product of two distributions $p_1(\bullet)\otimes p_2(\bullet;u)$ on $\{0,\ldots,l-1\}\times \mathbb{R}_{\geq 0}$. The distribution $p_1(\bullet)$ is a discrete probability distribution on $k_0=0,1,\ldots,l-1$ with density given by
	\begin{equation*}
	p_1(k_0)=\begin{cases}
	\frac{2(l-1-k_0)}{l^2},& \textit{if $k_0=0,\ldots,l-2$},\\
	\frac{1}{l},& \textit{if $k_0=l-1$}
	\end{cases},
	\end{equation*}
	The distribution $p_2(\bullet;u)$ is a distribution on $\mathbb{R}_{\geq 0}$ with density given by
	\begin{equation*}
	p_2(b,u)=\frac{b}{u}\sqrt{\frac{l}{2\pi}}(e^{lbu}-e^{-lbu})e^{-\frac{u^2l}{2}} e^{-\frac{b^2l}{2}}.
	\end{equation*}
\end{theorem}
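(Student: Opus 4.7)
My plan is to work directly from the combinatorial multiplicity formula of Theorem~\ref{comb_coef} rather than specializing the bulk pointwise asymptotic~(\ref{charas}). The reason is that the prefactor $\nu^{(l)}_{k_0}(\xi,N_0,\gamma;t)$ in~(\ref{charas}) was derived under the assumption that $\xi\in(0,1)$ is fixed, and naively substituting $\xi=b/\sqrt{2N_1}$, $t=u/\sqrt{2N_1}$ into it produces only the single Gaussian $e^{-l(b-u)^2/2}$, not the difference of Gaussians $2\sinh(lbu)\,e^{-(b^2+u^2)l/2}$ encoded in $p_2(b,u)$. In the near-Plancherel window both ``saddles'' (essentially a genuine one and its reflection through the boundary $k_1=0$) contribute, so a separate asymptotic analysis is needed.

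For $k_0\ne l-1$ I would re-pair the two alternating sums of Theorem~\ref{comb_coef}(b) in the form
\[
M^{(l)}_k(N) \;=\; F^{(N)}_{k_1l+k_0} \;+\; \sum_{m\ge 1}\Big[F^{(N)}_{(k_1+2m)l+k_0} \,-\, F^{(N)}_{(k_1+2m)l-k_0-2}\Big],
\]
and for $k_0=l-1$ just use $M^{(l)}_k(N)=F^{(N)}_k$ from Theorem~\ref{comb_coef}(a). Substituting $k_1=b\sqrt{2N_1}$, $N=2lN_1+N_0$ and applying the local central limit theorem $\binom{N}{(N-m)/2}\sim \frac{2^N}{\sqrt{\pi N/2}}e^{-m^2/(2N)}$ uniformly on $m=O(\sqrt N)$, one gets $F^{(N)}_m\sim \frac{2^{N+1}(m+1)}{N\sqrt{\pi N/2}}e^{-m^2/(2N)}$. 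Each paired difference is, to leading order, $2(k_0+1)$ times the discrete $m$-derivative of $F^{(N)}_m$, which at the scale $y=(k_1+2m)l/\sqrt{N}=(b+q)\sqrt{l}$, $q=2m/\sqrt{2N_1}$, becomes proportional to $(1-y^2)e^{-y^2/2}$. Converting $\sum_m$ into a Riemann integral over $q$ and using the exact antiderivative $(ue^{-u^2/2})'=(1-u^2)e^{-u^2/2}$, one finds $\int_{b\sqrt l}^\infty(1-u^2)e^{-u^2/2}du=-b\sqrt l\,e^{-b^2 l/2}$, producing a clean partial cancellation with the isolated term $F^{(N)}_{k_1l+k_0}$ and giving
\[
M^{(l)}_k(N)\;\sim\; \frac{\sqrt{2}\,\cdot\,2^N\,b\,(l-k_0-1)}{l^{3/2}\sqrt{\pi}\,N_1}\,e^{-b^2 l/2}
\]
(with the analogous simpler formula $\sim \frac{\sqrt{2}\cdot 2^N b}{\sqrt{\pi l}\,N_1}e^{-b^2 l/2}$ in the $k_0=l-1$ case). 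Crucially, no $u$-dependence has appeared yet.

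The $\sinh(lbu)$ enters entirely through the character ratio. Using Lemma~\ref{fchar} with $tk_1l=ubl$ \emph{exactly}, one computes
\[
\mathrm{ch}_{T(k)}(e^t)\;\sim\;\frac{2\sqrt{2}\sqrt{N_1}\,\sinh(ubl)}{u}\qquad(k_0\ne l-1),
\]
(and half that for $k_0=l-1$), while $(\mathrm{ch}_{T(1)}(e^t))^N=(2\cosh t)^N \sim 2^N e^{lu^2/2}$ since $Nt^2/2=lu^2/2$. Multiplying $M^{(l)}_k(N)$ by this ratio and rescaling by the discrete-to-continuous Jacobian $\sqrt{2N_1}/2$ (the parity constraint $k\equiv N\bmod 2$ forces $\Delta k_1=2$), both cases collapse to a single pointwise limit that factorizes as $p_1(k_0)p_2(b,u)$ with exactly the densities stated. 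To promote this to weak convergence of $(k_0,k_1/\sqrt{2N_1})$, I would invoke a Scheff\'e-type argument using the uniform Gaussian tail $e^{-b^2 l/2}$ to dominate the sum, together with the fact that both the discrete pre-limit and its candidate limit are genuine probability measures.

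The main obstacle is the delicate cancellation described above: the isolated term $F^{(N)}_{k_1l+k_0}$ and the reorganized sum are both of order $2^N/N_1$ and nearly cancel, leaving the crucial factor $1-(k_0+1)/l=(l-k_0-1)/l$. The local CLT must be applied with enough uniform precision that the subleading Gaussian corrections do not swamp this signal, and the Riemann-sum-to-integral approximation must be shown to converge uniformly in the tail $q\to\infty$. A useful sanity check, which I would verify at the end, is that sending $u\to 0$ (so $\sinh(lbu)/u\to lb$) reduces the formula for $p_2(b,u)$ to the radial Gaussian $\sqrt{2/\pi}\,l^{3/2}b^2\,e^{-b^2 l/2}$ of Theorem~\ref{planchfluct}, and that the discrete $p_1$ is literally identical to the one in Theorem~\ref{planchfluct}, confirming that this intermediate scaling interpolates correctly between the pure Plancherel and pure character regimes.
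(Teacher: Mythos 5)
Your proposal is correct and does take a genuinely different route from the paper, and your sanity checks (the $u\to 0$ limit recovering Theorem~\ref{planchfluct}, and $p_1$ being identical) are valid. The paper's Appendix E does \emph{not} specialize the full density asymptotic~(\ref{charas}) as you fear; rather it takes the multiplicity asymptotic~(\ref{mass}) from Theorem~\ref{multasymptmain}, substitutes $\xi=b/\sqrt{2N_1}$ into it, and separately recomputes the character ratio keeping the exact $e^{lbu}-e^{-lbu}$ numerator, which is where the $\sinh(lbu)$ in $p_2$ comes from. So the specific failure you diagnose in~(\ref{charas}) — dropping the $e^{-tk_1l}$ term — is real, but it is not a failure of the paper's proof, since the paper never drops it there. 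The comparison that actually matters is between your direct local CLT + Riemann-sum treatment of Theorem~\ref{comb_coef}(b) and the paper's formal substitution into~(\ref{mass}). Your approach is self-contained and more careful at the point where~(\ref{mass}) becomes delicate: the geometric-series summation underlying~(\ref{mass}) assumes $\xi>0$ fixed so that the ratio $\bigl(\frac{1-\xi}{1+\xi}\bigr)^l$ is bounded away from $1$ and the term-by-term Stirling expansion is applied with $j$ fixed; at $\xi=O(N_1^{-1/2})$ the dominant contribution shifts to $j=O(\sqrt{N_1})$, where the per-term Stirling error picks up $O(1)$ exponents and the geometric-series picture really should be replaced by the Gaussian integral with antiderivative $y e^{-y^2/2}$, exactly as you do. It is a pleasant fact that the formal substitution gives the same leading constant, which is presumably why the paper states the step so tersely; your argument supplies the justification. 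I verified your intermediate multiplicity asymptotic $M^{(l)}_k(N)\sim \sqrt{2}\,2^N b(l-k_0-1)\big/\bigl(l^{3/2}\sqrt{\pi}\,N_1\bigr)\cdot e^{-b^2 l/2}$ against the paper's~(\ref{mass}) at $\xi=b/\sqrt{2N_1}$, and they agree; the cancellation between $F^{(N)}_{k_1 l+k_0}$ (carrying a factor $l$) and the summed telescoping contribution (carrying $-(k_0+1)$) is only by an $O(1)$ factor, so the local CLT error $O(N^{-1/2})$ does not threaten it, and your acknowledged uniformity concerns about the tail are genuine but surmountable. The one thing to fix is to present your motivating criticism as pertaining to~(\ref{charas}) only, not to what the paper actually does.
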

The proof of this theorem is presented in Appendix \ref{proofintermsc}.
Analogously, we expect that this theorem holds for any sequence of $\{N^{(p)}\}_{p\in\mathbb{Z}_{\geq 0}}$, where $N^{(p)}\to\infty$ as $p\to\infty$.

\subsection{The Poisson asymptotic of the character measure}
There is the limit when $N\to\infty$ and $t\to\infty$ such that the random walk induced by the Markov process for the character measure is drifted with the critical slope, meaning, the limit shape $\xi_0$ is on the right boundary of the Weyl alcove. It turns out that when $t$ increases logarithmically as $N\to\infty$ the character probability distribution weakly converges to a Poisson distribution in the vicinity of the right boundary of the Weyl alcove.

The following theorem gives the pointwise asymptotic of the multiplicity function in the limit when $N\to\infty$, $k\to\infty$ such that $N-k$ is fixed.
\begin{theorem}\label{multdrift}
    Assume that $N=2lN_1+N_0$, $k=k_1l+k_0$, $N_1\to\infty$, $k_1\to\infty$, such that $l$, $N_0=0,\dots,2l-1$, $k_0=0,\dots,l-1$ remain fixed, $N-k=2s$ for some $s\in\mathbb{Z}_{\geq 0}$. Then the asymptotic of $M^{(l)}_{T(k)}(N)$ is given by
	\begin{equation*}
	M^{(l)}_{T(k)}(N)=\frac{N^\frac{N-k}{2}}{\big(\frac{N-k}{2}\big)!}\Big(1+o(1)\Big).
	\end{equation*}
\end{theorem}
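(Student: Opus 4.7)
My plan is to apply the explicit multiplicity formula from Theorem \ref{comb_coef} and extract the dominant term. Writing $s=(N-k)/2$, the hypothesis that $N-k=2s$ with $s\in\mathbb{Z}_{\geq 0}$ fixed forces $s$ to be bounded as $N_1,k_1\to\infty$. In case (a) of Theorem \ref{comb_coef}, i.e.\ when $k_0=l-1$, the multiplicity reduces to a single classical coefficient, $M^{(l)}_{T(k)}(N)=F^{(N)}_k$. In case (b), $0\leq k_0\leq l-2$, it is an alternating sum of $F^{(N)}$-numbers, and I plan to show that the leading contribution comes from the distinguished term $F^{(N)}_k$ itself, with all remaining summands being of strictly lower order in $N$.

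Using (\ref{tkN}) together with the standard expansion $\binom{N}{s}=\frac{N^{s}}{s!}(1+O(1/N))$ for fixed $s$, I would derive
\[
F^{(N)}_{N-2s}=\binom{N}{s}-\binom{N}{s-1}=\frac{N^{s}}{s!}\bigl(1+O(1/N)\bigr),
\]
which already establishes case (a). For case (b), the $m$-th summand of the first sum in Theorem \ref{comb_coef} is $F^{(N)}_{k+2ml}$, corresponding to the index $s_m=s-ml$, and the $n$-th summand of the second sum is $F^{(N)}_{k+2(n+1)l-2k_0-2}$, corresponding to $s'_n=s-l+k_0+1-nl$. For $m\geq 1$ one has $s_m\leq s-l\leq s-3$ using $l\geq 3$, while for all $n\geq 0$ one has $s'_n\leq s-l+k_0+1\leq s-1$ using $k_0\leq l-2$. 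By the same binomial expansion, each such term is $O(N^{s-1})$, and for fixed $s$ and $l$ only finitely many of them are nonzero.

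Combining these estimates yields
\[
M^{(l)}_{T(k)}(N)=\frac{N^{s}}{s!}\bigl(1+O(1/N)\bigr)+O(N^{s-1})=\frac{N^{(N-k)/2}}{((N-k)/2)!}\bigl(1+o(1)\bigr),
\]
which is the claim. The only genuine subtlety, and the main obstacle I anticipate, is careful index bookkeeping: one must confirm that every summand other than $F^{(N)}_k$ in the formula of Theorem \ref{comb_coef} really corresponds to a value strictly less than $s$, which is precisely where the standing constraints $0\leq k_0\leq l-2$ in case (b) and $l\geq 3$ enter the argument.
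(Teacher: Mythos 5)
Your proof is correct and follows essentially the same route as the paper's: both identify the $m=0$ term $F^{(N)}_k$ in the explicit formula of Theorem \ref{comb_coef} as dominant and bound the remaining finitely many summands. The only difference is cosmetic---where the paper pushes each term through Stirling's formula (carrying around the $N=2lN_1+N_0$ bookkeeping), you use the cleaner observation that $\binom{N}{s}=\frac{N^s}{s!}(1+O(1/N))$ for fixed $s$, which is well suited to this regime since $s=(N-k)/2$ is bounded; your index accounting (showing $s_m\le s-3$ for $m\ge 1$ via $l\ge 3$, and $s'_n\le s-1$ for all $n\ge 0$ via $k_0\le l-2$) is exactly the point that makes the estimate go through.
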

The proof of this theorem is presented in Appendix \ref{poisproof1}.

Using results of the Theorem \ref{multdrift}, we obtain the following limiting probability distribution for the character measure near the right boundary.
\begin{theorem}\label{mesdrift}
    Let $N=2lN_1+N_0$ and $N_1\to\infty$, such that $N_0$, $l$ are fixed. If $t\to\infty$ such that $\theta=Ne^{-2t}$ is fixed, then for $k=k_1l+k_0$, random variable $s=\frac{N-k}{2}$ converges to a random variable on $\mathbb{Z}_{\geq 0}$ and the character probability distribution weakly converges to the Poisson distribution on $\mathbb{Z}_{\geq 0}$ with density given by
	\begin{equation*}
	p(s;\theta)=\frac{\theta^{s}e^{-\theta}}{s!}.
	\end{equation*}
\end{theorem}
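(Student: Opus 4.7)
The plan is to combine the multiplicity asymptotic from Theorem \ref{multdrift} with the leading $t\to\infty$ behaviour of the tilting characters in Lemma \ref{fchar}, and then observe that the definition of the character measure rearranges to give the Poisson density.

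First I would substitute $k = N - 2s$ into the character measure
\[
p^{(l)}_N(k;t) \;=\; \frac{M^{(l)}_{k}(N)\,\mathrm{ch}_{T(k)}(e^t)}{\bigl(\mathrm{ch}_{T(1)}(e^t)\bigr)^{N}},
\]
and treat each factor separately. For the denominator, $\mathrm{ch}_{T(1)}(e^t)=e^t+e^{-t}$, so
\[
\bigl(e^t+e^{-t}\bigr)^N = e^{tN}\bigl(1+e^{-2t}\bigr)^N = e^{tN}\Bigl(1+\tfrac{\theta}{N}\Bigr)^{N},
\]
which, under $N_1\to\infty$ with $\theta=Ne^{-2t}$ fixed, is $e^{tN}e^{\theta}(1+o(1))$.

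Next I would extract the dominant exponential from $\mathrm{ch}_{T(k)}(e^t)$. For $k=k_1l+k_0$ with $k_1$ large, Lemma \ref{fchar} gives either
\[
(e^{t(k_0+1)}+e^{-t(k_0+1)})\,\frac{e^{tk_1l}-e^{-tk_1l}}{e^t-e^{-t}} \quad\text{or}\quad \frac{e^{t(k_1+1)l}-e^{-t(k_1+1)l}}{e^t-e^{-t}};
\]
in both cases, keeping only the leading terms as $t\to\infty$ yields
\[
\mathrm{ch}_{T(k)}(e^t) = e^{tk}\bigl(1+o(1)\bigr).
\]
This step is purely algebraic but needs the estimate to be uniform for all admissible $k_0$, so I would check the two cases explicitly and verify that the $o(1)$ is controlled by $e^{-2t}=\theta/N\to 0$. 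Combined with Theorem \ref{multdrift}, which gives $M^{(l)}_{k}(N)=\frac{N^{s}}{s!}(1+o(1))$, the numerator becomes $\frac{N^{s}}{s!}e^{tk}(1+o(1))$, and dividing through yields
\[
p^{(l)}_N(k;t) \;\sim\; \frac{N^{s}e^{tk}}{s!\,e^{tN}e^{\theta}} \;=\; \frac{N^{s}e^{-2ts}}{s!\,e^{\theta}} \;=\; \frac{(Ne^{-2t})^{s}}{s!\,e^{\theta}} \;=\; \frac{\theta^{s}e^{-\theta}}{s!},
\]
which is precisely the Poisson density.

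Finally, to upgrade pointwise convergence of densities to weak convergence I would invoke Scheff\'e's lemma: since both $p^{(l)}_N(\,\cdot\,;t)$ and the Poisson density are probability mass functions on $\mathbb{Z}_{\geq 0}$ (indexed by $s$) and we have pointwise convergence on this common countable state space, total-variation convergence follows automatically, hence weak convergence. The only thing to be cautious about is that the pointwise asymptotic derived above was for $k_1\to\infty$, whereas a fixed $s$ corresponds to $k_1$ growing proportionally to $N_1$; this is precisely the regime covered by Theorem \ref{multdrift}, so no extra work is needed. The main potential obstacle is the estimate of $\mathrm{ch}_{T(k)}(e^t)$: one must verify that the subleading terms are uniformly of order $e^{-2t}$ across the case split of Lemma \ref{fchar}, so that they can be absorbed into the $(1+o(1))$ with $\theta/N\to 0$; this is the only delicate bookkeeping in an otherwise direct computation.
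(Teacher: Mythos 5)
Your proposal is correct and follows essentially the same route as the paper's Appendix~\ref{poisproof2}: combine Theorem~\ref{multdrift} with the $t\to\infty$ asymptotics of the character ratio, and recognize $\big(1+e^{-2t}\big)^N\to e^{\theta}$. The only cosmetic difference is that you factor $e^{tk}$ out of the numerator and $e^{tN}e^{\theta}$ out of the denominator separately, while the paper computes the ratio $\mathrm{ch}_{T(k)}/(\mathrm{ch}_{T(1)})^N \sim e^{-t(N-k)}e^{-\theta}$ directly; your appeal to Scheff\'e's lemma on the countable state space is a clean way to state the weak-convergence upgrade that the paper leaves implicit.
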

The proof of this theorem is presented in Appendix \ref{poisproof2}.
This result coincides with the one for classical Lie algebra $\lie{sl}_2$.

\section{The asymptotic of probability measures for $u_q(\lie{sl}_2)$}\label{as10}
\subsection{Corresponding Markov processes}
\subsubsection{Probabilistic measures induced by Markov processes}
Let $X$ be a countable set. A {\it local Markov process} on $X$ is a collection of transition probabilities $\{\mathrm{Prob}(x\to y)\}_{x,y\in X}$, i.e. $0\leq \mathrm{Prob}(x\to y)\leq 1$, such that
\begin{itemize}
\item for each $x$ there are only finitely many $y$ such that $\mathrm{Prob}(x\to y)\neq 0$,
\item $\sum\limits_y \mathrm{Prob}(x\to y)=1$, i.e. $x$ with probability $1$ will transition to some $y$. 
\end{itemize}
Let $\{p(x)\}$ be a probability measure on $X$, i.e. for each $x\in X$, $0\leq p(x) \leq 1$ and $\sum\limits_x p(x)=1$. We will say it is {\it local}, if $p(x)\neq 0$ only for finitely many $x\in X$. 

One of the fundamental properties of Markov processes is that function 
\[
p_N(x)=(\mathrm{Prob}^Np)(x)=\sum_{x_1,x_2,\dots, x_N}p(x_1)\mathrm{Prob}(x_1\to x_2)\dots \mathrm{Prob}(x_N\to x)
\]
is a probability measure on $X$ for each $N\in\mathbb{Z}_{>0}$, $N\geq 2$. For each such $N$ it is local if $p(x)$ is local and the Markov process is local. In particular, if $p(x)=\delta_{x,x_0}$ by applying repeatedly a local Markov process to such distribution we have a sequence of probability distributions on $X$:
\[
p_{x_0}(x)=\sum_{x_1,x_2,\dots, x_{N-1}}\mathrm{Prob}(x_0\to x_1)\dots \mathrm{Prob}(x_{N-1}\to x)
\]
It turned out that character and Plancherel measures introduced earlier are exactly of this nature.

\subsubsection{Character and Plancherel measures for $U_q(\lie{sl}_2)$} 
Consider the Markov process on $\Z_{\geq 0}$ with transition probabilities determined by tensor product decomposition (\ref{A}):
\begin{equation*}
\mathrm{Prob}(n\to m)=\frac{\mathrm{ch}_{T(m)}(x)\dim\mathrm{Hom}(T(n)\otimes T(1),T(m))}{\mathrm{ch}_{T(n)}(x)\mathrm{ch}_{T(1)}(x)}
\end{equation*}
Here $x=e^t>0$ which ensures positivity of transition probabilities. The Markov property $\sum_m \mathrm{Prob}(n\to m)=1$ is guaranteed by the decomposition rule of the tensor product. The character measure on the set of tilting components of $T(1)^{\otimes N}$ can be regarded as a probability distribution on $\Z_{\geq 0}$ induced by this Markov process. 

The probability $p^{(l)}_N(k;t)$ is the result of iteration of this random process starting from the distribution supported at $k=0$, i.e. from $p^{(l)}_0(k;t)=\delta_{k,0}$:
\[
p^{(l)}_N(k;t)=\sum_{\gamma} \mathrm{Prob}(0\to 1)\mathrm{Prob}(1\to k_2)\dots \mathrm{Prob}(k_{N-1}\to k)
\]
Here the sum is taken over all lattice paths described in Section \ref{lpm}. A path $\gamma$ starts at the origin, follows the rules of elementary steps and ends at $k$ after $N+1$ steps $(k_i,k_{i+1})$.

Specializing the character measure to $t=0$ we obtain the Markov process interpretation of the Plancherel measure for $U_q(\lie{sl}_2)$. The results on the asymptotic of probability measures for $U_q(sl_2)$ can also be obtained by analyzing this Markov process. It will be done in a separate publication.

\subsubsection{Plancherel and quantum Plancherel measures for $u_q(\lie{sl}_2)$}
The decomposition rules \ref{B} define two natural Markov processes on $\{0,1,\ldots,3l-2\}$:
\begin{equation}\label{probmatrix}
\mathrm{Prob}(n\to m)=\frac{d_m \dim\mathrm{Hom}(T(n)\otimes T(1),T(m))}{d_n d_1}.
\end{equation}
Here $T(n)$ are representations $T(n)$ of $u_q(\lie{sl}_2)$ as described in Section \ref{secplanch}. For Plancherel and quantum Plancherel $d_n$ are dimensions $\dim T(n)$ and quantum dimensions $\mathrm{dim}_q T(n)$ of representation $T(n)$ correspondingly, see Sections \ref{secplanch} and \ref{qdims}.

\subsection{The asymptotic of Plancherel and quantum Plancherel measures on $u_q(\lie{sl}_2)$-modules}\label{markovuq}
The asymptotic of these measures can be computed directly from the analysis of multiplicities. But it is much easier to use Markov processes in this case.

\subsubsection{Plancherel measure}
For the small quantum group the Markov matrix (\ref{probmatrix}) is a real $3l-1\times 3l-1$ matrix
with non-negative matrix elements:
\begin{equation*}
\mathrm{Prob}(k\to k+1)=\begin{cases}
\frac{k+2}{2(k+1)},&\textit{if $0\leq k \leq l-2$}\\
1,&\textit{if $k=l-1$, $2l-1$}\\
0,&\textit{if $k=3l-2$}\\
\frac{1}{2},&\textit{if else}
\end{cases}
\end{equation*}
\begin{equation*}
\mathrm{Prob}(k\to k-1)=\begin{cases}
\frac{k}{2(k+1)},&\textit{if $0\leq k \leq l-2$}\\
0,&\textit{if $k=l-1, 2l-1$}\\
\frac{1}{2},&\textit{if else}
\end{cases}
\end{equation*}
\begin{equation*}
\mathrm{Prob}(3l-2\to l-1)=\frac{1}{2}.
\end{equation*}

It is not difficult to check explicitly that its largest eigenvalue $\lambda_{\text{max}}=1$ and that it has multiplicity $1$. Thus the stationary distribution in this case is given by components of the eigenvector corresponding to the largest eigenvalue of the Markov matrix. Straightforward computation shows that it is given by 
\begin{equation*}
(p_0,\ldots, p_{3l-2}),
\end{equation*}
where
\begin{equation*}
p_j=\begin{cases}
0,&\textit{if $k\leq l-2$}\\
\frac{2l-1-k}{l^2},&\textit{if $l\leq k\leq 2l-2$,}\\
\frac{3l-1-k}{l^2},&\textit{if $2l\leq k\leq 3l-2$,}\\
\frac{1}{2l},&\textit{if $k=l-1$ or $k=2l-1$.}
\end{cases}
\end{equation*}
It is normalized to the total probability condition
$$
\sum\limits_{j=0}^{3l-2}p_j=1.
$$

\subsubsection{Quantum Plancherel measure}
The Markov matrix for the quantum Plancherel measure is given by transition probabilities
\begin{equation*}
\mathrm{Prob}(k\to k+1)=\begin{cases}
\frac{q^{k+2}-q^{-(k+2)}}{(q^{k+1}-q^{-(k+1)})(q+q^{-1})},&\textit{if $0\leq k \leq l-3$}\\
0,&\textit{if $k=l-2$}
\end{cases}
\end{equation*}
\begin{equation*}
\mathrm{Prob}(k\to k-1)=\begin{cases}
\frac{q^{k}-q^{-k}}{(q^{k+1}-q^{-(k+1)})(q+q^{-1})},&\textit{if $0\leq k \leq l-3$}\\
1,&\textit{if $k=l-2$}
\end{cases}
\end{equation*}
It is a $l-1 \times l-1$ matrix supported on irreducible representations because quantum dimensions of other indecomposable summands in $T(1)^{\otimes N}$ vanish. The largest eigenvalue $\lambda_{\text{max}}=1$ of this matrix has multiplicity $1$. It is not difficult to compute components of the corresponding eigenvector $(p_0,\ldots, p_{l-2})$:
\begin{equation*}
p_j=-\frac{(q^{j+1}-q^{-(j+1)})^2}{2l}=\frac{2\mathrm{sin}^2\left(\frac{\pi (j+1)}{l} \right)}{l}.
\end{equation*}
It is normalized to the total probability condition
$$
\sum\limits_{j=0}^{l-2}p_j=1.
$$

\appendix
\section{Gaussian binomial coefficients}\label{gbcc}
Let $v$ be a formal parameter. The Gaussian binomial coefficient for $m \in \Z$, $r \in \mathbb{Z}_{\geq 0}$, is defined as follows 
\[ \qbin{m}{r} = \prod_{s=1}^r \frac{v^{m-s+1} - v^{-m+s-1}}{v^s-v^{-s}}  \in \Z[v, v^{-1}]. \] 
We also define for $n \geq 0$  the quantum numbers 
\[ [n] = \frac{v^n - v^{-n}}{v - v^{-1}} , \quad \quad  [n]! = \prod_{s=1}^n \frac{v^s - v^{-s}}{v-v^{-1}} .\] 
If $0 \leq r \leq m$, then 
\[ \qbin{m}{r} = \frac{[m]!}{[r]! [m-r]!} .\] 
Now let $q = e^{\frac{\pi i}{l}}$, where $l$ is an odd integer $\geq 3$, so that $q^{2l} =1$ and $q^k \neq 1$ for any $1 \leq k < 2l$.

\begin{proposition}  \label{q-bin1} 
	Let $m = lm_1 + m_0$, where $0 \leq m_0 \leq l-1$, $m_1 \in \Z$, $r= r_1 l + r_0$, where $0 \leq r_0 \leq l-1$, $r_1 \in \mathbb{Z}_{\geq 0}$. 
	For the specialization $v \rightarrow q$ we have  
	\[ \qbin{lm_1}{lr_1} =  (-1)^{(m_1+1)r_1} { m_1 \choose r_1}   .\] 
	\[ \qbin{m}{r} =    (-1)^{(m_0 r_1 - m_1 r_0)+(m_1+1)r_1} { m_1 \choose r_1}  \qbin{m_0}{r_0} .\] 
\end{proposition}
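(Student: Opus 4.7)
The plan is to compute $\qbin{m}{r}\vert_{v=q}$ directly from the product defining it. The key inputs are two elementary facts about the quantum integer $[n]_v=(v^n-v^{-n})/(v-v^{-1})$ at $v=q=e^{\pi i/l}$ (with $l$ odd): first, since $q^l=-1$, the value $[n]_q=0$ if and only if $l\mid n$, and the vanishing factor has the limit
\[
\lim_{v\to q}\frac{[lk]_v}{[l]_v}=(-1)^{k-1}k;
\]
second, for $1\leq t\leq l-1$ and any $j\in\Z$,
\[
[jl+t]_q=(-1)^j[t]_q,
\]
with the companion identity $[lk-t']_q=(-1)^{k+1}[t']_q$ for $0<t'<l$. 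Both facts follow from $q^l=-1$ by short calculations.

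For part (a), I would expand
$\qbin{lm_1}{lr_1}_v=\prod_{s=1}^{lr_1}[lm_1-s+1]_v/[s]_v$
and partition the factors into $r_1$ consecutive blocks of $l$ values of $s$. In the $j$-th block there is exactly one vanishing numerator factor (at $s\equiv 1\pmod{l}$, contributing $[l(m_1-j+1)]$) and one vanishing denominator factor (at $s\equiv 0\pmod{l}$, contributing $[lj]$). Collecting the $r_1$ copies of $[l]_q$ in numerator and denominator and applying the limit above, the vanishing piece evaluates to ${m_1\choose r_1}(-1)^{r_1(m_1+1)}$ (using $r_1(m_1-r_1)\equiv r_1(m_1+1)\pmod{2}$). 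The remaining $(l-1)r_1$ non-vanishing factors in numerator and denominator pair off block-by-block via the two identities stated above: the $[t']_q$'s cancel, and the accumulated sign is $(l-1)$ times an integer, hence trivially $+1$ because $l-1$ is even.

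For part (b), I would invoke a form of the $q$-Chu--Vandermonde identity
\[
\qbin{lm_1+m_0}{lr_1+r_0}_v=\sum_{j}v^{\phi(j)}\qbin{lm_1}{j}_v\qbin{m_0}{lr_1+r_0-j}_v,
\]
with an explicit exponent $\phi(j)$. By part (a), $\qbin{lm_1}{j}_q=0$ unless $l\mid j$; and the second factor forces $0\leq lr_1+r_0-j\leq m_0\leq l-1$. Together these constraints single out $j=lr_1$, so only one term of the sum survives. That term equals $q^{\phi(lr_1)}\qbin{lm_1}{lr_1}_q\qbin{m_0}{r_0}_q$; a direct evaluation of $\phi(lr_1)\pmod{2l}$, using $q^l=-1$, produces the extra sign $(-1)^{m_0r_1-m_1r_0}$, which when combined with the sign from part (a) yields the claimed formula. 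Alternatively, one can push the block-by-block analysis of part (a) through directly: the numerator zeros at $s\equiv m_0+1\pmod{l}$ no longer align with the denominator zeros at $s\equiv 0\pmod{l}$, and a careful bookkeeping of the resulting mismatch produces the same additional factor while leaving the leftover partial block to assemble into $\qbin{m_0}{r_0}_q$.

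The main obstacle will be the sign bookkeeping in part (b): several independent sources of $-1$ (the $(-1)^{(m_1+1)r_1}$ from part (a), the $q$-power $q^{\phi(lr_1)}$ from the Vandermonde kernel, and subsequent simplifications modulo $2l$) must combine precisely to the target $(-1)^{(m_0r_1-m_1r_0)+(m_1+1)r_1}$. The safest strategy is to keep the $v$-exponents exact throughout the computation and reduce modulo $2l$ only at the very end via $q^{2l}=1$ and $q^l=-1$, and to exploit repeatedly that $l$ is odd (so $l-1$ is even) to discard spurious-looking contributions that in fact cancel.
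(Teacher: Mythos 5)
Your proposal is sound, and it actually supplies more content than the paper does: the paper's ``proof'' of Proposition~\ref{q-bin1} is the single line ``This follows from \cite{Lus94}, 34.1.2,'' with no argument given. Both of your suggested routes (the $q$-Chu--Vandermonde reduction and the direct block-by-block factorization of the product $\prod_{s=1}^r[m-s+1]/[s]$) are viable and in the spirit of Lusztig's own derivation; your three elementary identities about $[n]_q$ at $q^l=-1$ are all correct, and so is the computation $(l-1)$-even that kills the signs from non-vanishing factors in part (a).

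Two points deserve attention before this would count as a complete proof. First, in part (b) you write ``By part (a), $\qbin{lm_1}{j}_q=0$ unless $l\mid j$,'' but part (a) only evaluates $\qbin{lm_1}{lr_1}$; it says nothing about $\qbin{lm_1}{j}$ when $l\nmid j$. The vanishing is true, but you need a one-line count: writing $j=j_1l+j_0$ with $1\le j_0\le l-1$, the numerator of $\prod_{s=1}^{j}[lm_1-s+1]/[s]$ has zeros at $s\equiv 1\pmod l$, i.e.\ $j_1+1$ of them, while the denominator has only $j_1$ zeros at $s\equiv 0\pmod l$; hence the product vanishes. Second, you should fix the Vandermonde kernel explicitly; with the version
\[
\qbin{a+b}{c}=\sum_{j}v^{\,a(c-j)-bj}\qbin{a}{j}\qbin{b}{c-j},
\]
the unique surviving summand $j=lr_1$ carries the exponent $lm_1r_0-m_0lr_1=l(m_1r_0-m_0r_1)$, and since $q^l=-1$ this is exactly $(-1)^{m_1r_0-m_0r_1}=(-1)^{m_0r_1-m_1r_0}$, which combined with the sign $(-1)^{(m_1+1)r_1}$ from part (a) gives the claimed result. (One should also note that the Vandermonde identity, proved by splitting the $q$-binomial generating function for $a\ge 0$, extends to $a=lm_1<0$ by the standard Laurent-polynomial-in-$v^a$ argument, since the range of $j$ stays finite as $\qbin{b}{c-j}$ cuts it off.) With those two gaps filled, the proof is complete.
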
 
This follows from \cite{Lus94}, 34.1.2. 

\begin{corollary}  \label{q-bin2} 
	For an integer $p \geq 0$ and the specialization $v \rightarrow q$ we have 
	\[ \frac{[lp]!}{([l]!)^p} = p! (-1)^{p(p-1)/2} . \] 
\end{corollary}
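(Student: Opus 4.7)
The plan is to reduce the identity to an iterated application of Proposition~\ref{q-bin1} via a telescoping product. Observe that
\[
\frac{[lp]!}{([l]!)^p} \;=\; \prod_{k=1}^{p} \frac{[lk]!}{[l]!\,[l(k-1)]!} \;=\; \prod_{k=1}^{p} \qbin{lk}{l},
\]
since the intermediate factorials $[l(k-1)]!$ cancel pairwise (and $[0]!=1$). So everything reduces to evaluating each $\qbin{lk}{l}$ after specialization $v\to q$.

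The second step is to apply Proposition~\ref{q-bin1} with $m_1=k$, $m_0=0$, $r_1=1$, $r_0=0$, which gives
\[
\qbin{lk}{l} \;=\; (-1)^{k+1}\binom{k}{1} \;=\; (-1)^{k+1}\,k
\]
at $v=q$. Plugging this into the telescoping product yields
\[
\frac{[lp]!}{([l]!)^p} \;=\; \Bigl(\prod_{k=1}^{p} k\Bigr)\Bigl(\prod_{k=1}^{p} (-1)^{k+1}\Bigr) \;=\; p!\,(-1)^{\sum_{k=1}^{p}(k+1)}.
\]

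The only remaining step is a routine parity check on the sign. One computes
\[
\sum_{k=1}^{p}(k+1) \;=\; \frac{p(p+1)}{2} + p,
\]
and notes that this differs from $\tfrac{p(p-1)}{2}$ by $2p$, hence has the same parity. Therefore $(-1)^{\sum_{k=1}^{p}(k+1)} = (-1)^{p(p-1)/2}$, giving the claimed identity. There is no real obstacle here; the only thing to be careful about is this sign bookkeeping, and checking that the telescoping is valid (which only requires $[l]!\neq 0$ at the specialization, equivalently that $q^s\neq q^{-s}$ for $1\le s\le l-1$, which holds since $q$ is a primitive $2l$-th root of unity).
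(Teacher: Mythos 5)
Your proof is correct in substance and takes essentially the same route as the paper, which proves the corollary ``by induction on $p$'' from Proposition~\ref{q-bin1}; your telescoping product is exactly the unwound form of that induction, and the sign bookkeeping is fine. However, the parenthetical justification at the end is factually wrong: at the specialization $v\to q$ one has $[l] = \frac{q^l - q^{-l}}{q-q^{-1}} = 0$ (since $q^l = -1$), hence $[l]! = \prod_{s=1}^{l}[s] = 0$, \emph{not} nonzero as you claim (you overlooked the $s=l$ factor). The telescoping identity
\[
\frac{[lp]!}{([l]!)^p} \;=\; \prod_{k=1}^{p}\qbin{lk}{l}
\]
is nonetheless valid, but for a different reason: it holds as an identity over $\Q(v)$ (equivalently in $\A=\Z[v,v^{-1}]$, since each $\qbin{lk}{l}\in\A$), and the right-hand side --- being a Laurent polynomial --- may then be specialized at $v=q$. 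The left-hand side as written is a $0/0$ after specialization, so the expression $\frac{[lp]!}{([l]!)^p}$ in the statement must already be read as an element of $\A$ before specializing; once that is understood, no nonvanishing condition is needed.
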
 
This  follows from Proposition \ref{q-bin1} by induction on $p$. 

\begin{corollary}  \label{q-bin3} 
	\begin{enumerate} 
		\item Let $m = m_0 + l m_1$, where $0 \leq m_0 \leq l-1$, $m_1 \in \Z$. Then 
		\[ \qbin{m}{l} = (-1)^{m +1} m_1. \] 
		\item If $m, n \in \Z$ satisfy $q^m = q^n$ and $\qbin{m}{l} = \qbin{n}{l}$, then $m=n$. 
	\end{enumerate} 
\end{corollary}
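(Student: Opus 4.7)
My plan is to derive both parts of Corollary \ref{q-bin3} directly from Proposition \ref{q-bin1}, which already encodes the hard work of simplifying the Gaussian binomial coefficient at $v \to q$.

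For part (a), I would specialize Proposition \ref{q-bin1} to $r = l$, i.e.\ to $r_1 = 1$ and $r_0 = 0$. Writing $m = lm_1 + m_0$ with $0 \le m_0 \le l-1$, the formula collapses to
\[ \qbin{m}{l} = (-1)^{(m_0 \cdot 1 - m_1 \cdot 0) + (m_1+1)\cdot 1}\, \binom{m_1}{1}\, \qbin{m_0}{0} = (-1)^{m_0+m_1+1} m_1, \]
using the conventions $\qbin{m_0}{0} = 1$ and $\binom{m_1}{1} = m_1$ (the latter valid also for negative $m_1 \in \Z$). The remaining step is to identify the sign: since $l$ is odd, $l m_1 \equiv m_1 \pmod{2}$, hence $m \equiv m_0 + m_1 \pmod{2}$, so $(-1)^{m_0+m_1+1} = (-1)^{m+1}$, yielding the claimed identity.

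For part (b), the hypothesis $q^m = q^n$ combined with the fact that $q$ is a primitive $2l$-th root of unity forces $m \equiv n \pmod{2l}$. In particular, $m - n$ is even, so $m \equiv n \pmod{2}$ and $(-1)^{m+1} = (-1)^{n+1}$. Writing $m = lm_1 + m_0$ and $n = ln_1 + n_0$ with $0 \le m_0, n_0 \le l-1$, the assumption $\qbin{m}{l} = \qbin{n}{l}$ together with part (a) then gives $m_1 = n_1$. Consequently $m_0 - n_0 = (m-n) - l(m_1 - n_1) = m - n \equiv 0 \pmod{2l}$; but $|m_0 - n_0| \le l-1 < 2l$ forces $m_0 = n_0$, and therefore $m = n$.

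I do not anticipate any real obstacle here: both statements are elementary consequences of Proposition \ref{q-bin1}. The only subtlety worth flagging is the use of the hypothesis that $l$ is odd, which enters twice, first in rewriting the parity of $m$ in terms of that of $m_0 + m_1$ in part (a), and then implicitly in part (b) through the parity implication $m \equiv n \pmod{2l} \Rightarrow m \equiv n \pmod{2}$.
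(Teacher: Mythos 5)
Your proposal is correct for both parts, but for part (a) you take a genuinely more uniform route than the paper. The paper's proof splits into the three cases $m_1 \ge 1$, $m_1 = 0$, and $m_1 < 0$: for $m_1 \ge 1$ it quotes Proposition~\ref{q-bin1} directly (as you do), for $m_1 = 0$ it observes $\qbin{m}{l} = 0$ directly since $m < l$, and for $m_1 < 0$ it does \emph{not} apply Proposition~\ref{q-bin1} directly; instead it first uses the reflection identity $\qbin{m_1 l + m_0}{l} = (-1)^l \qbin{-m_1 l - m_0 + l -1}{l}$ to land in the regime where the $l$-adic part of the upper argument is positive, applies the Proposition there, and then unwinds the signs using that $l$ is odd. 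Your proof instead reads Proposition~\ref{q-bin1} literally for all $m_1 \in \Z$, with $\binom{m_1}{1} = m_1$ as the generalized binomial. This gives the same sign and the same value, and it is cleaner since it avoids the case split, but it silently assumes that the Proposition's formula (as inherited from Lusztig 34.1.2) is valid with the generalized binomial convention for negative $m_1$. The fact that the paper's authors opted to reduce $m_1 < 0$ to $m_1 > 0$ rather than apply the Proposition directly suggests they did not wish to rely on that point. It would be worth a sentence either verifying the $m_1<0$ case of Proposition~\ref{q-bin1} via the reflection identity, or explicitly noting you are trusting the Proposition as stated for all $m_1 \in \Z$. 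For part (b) your argument, passing through $m \equiv n \pmod{2l}$, the resulting equality of signs $(-1)^{m+1}=(-1)^{n+1}$, then $m_1 = n_1$ and finally $m_0 = n_0$, is essentially the same as the paper's (which writes $n = m + 2lp$ and computes $\qbin{n}{l}$ to force $p=0$); both are correct.
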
 
\begin{proof} 
	If $m_1 \geq 1$, then  (a) follows from Proposition \ref{q-bin1}. If $m_1=0$, then $0 \leq m \leq l-1$ and $\qbin{m}{l} =0$ for an indeterminate $v$. If $m_1 <0$, then we have 
	\[ \qbin{m_1 l + m_0 }{l} = (-1)^l \qbin{-m_1 l - m_0 + l -1}{l} = (-1)^l (-1)^{-m_1+ (l-1 -m_0)+1} (-m_1) = (-1)^{m_1 + m_0+1}m_1.  \] 
	Using the assumption that $l$ is odd, we have $(-1)^m = (-1)^{lm_1 + m_0} = (-1)^{m_1+ m_0}$, and finally 
	\[ \qbin{m_1 l + m_0 }{l} = (-1)^{m+1} m_1 .\] 
	Now consider (b). If $q^m = q^n$, then $n = m + 2lp$ for some $p \in \Z$. Then $\qbin{n}{l} = \qbin{l(m_1+2p)+m_0}{l} = (-1)^{m_1 + 2p + m_0 +1} (m_1 + 2p) = (-1)^{m_1+ m_0 +1} (m_1+2p)$, and $\qbin{m}{l} = (-1)^{m_1+ m_0 +1}m_1$, 
	so the equality $\qbin{n}{l} = \qbin{m}{l}$ implies $p=0$ and $m=n$. 
\end{proof} 

\section{Proof of Theorem \ref{th_u}}\label{prthu}
\subsection{Proof of theorem \ref{th_u} (a)}
\begin{proof}
	Consider the counting formula for lattice path in the auxiliary model \cite{PS}
	\begin{eqnarray}\label{auxmod}
	\label{mj}
	M^{j}_{(M,N)} = 2^{j-2}\big(\sum_{k=0}^{\lfloor\frac{N-(j-1)l+1}{4l}\rfloor}P_j(k) F^{(N)}_{M+4kl}+\sum_{k=0}^{\lfloor\frac{N-jl}{4l}\rfloor}P_j(k)F^{(N)}_{M-4kl-2jl} - \nonumber\\
	-\sum_{k=0}^{\lfloor\frac{N-(j+1)l+1}{4l}\rfloor}Q_j(k)F^{(N)}_{M+2l+4kl}-\sum_{k=0}^{\lfloor\frac{N-(j+2)l}{4l}\rfloor}Q_j(k)F^{(N)}_{M-4kl-2(j+1)l} \big),
	\end{eqnarray}
	where
	\begin{equation*}
	P_j(k)=\sum_{i=0}^{\lfloor\frac{j}{2}\rfloor}\binom{j-2}{2i}\binom{k-i+j-2}{j-2},\;\;\;\;\;
	Q_j(k)=\sum_{i=0}^{\lfloor\frac{j}{2}\rfloor}\binom{j-2}{2i+1}\binom{k-i+j-2}{j-2}.
	\end{equation*}
	
	Let us map part of the auxiliary lattice to the cylinder with coordinates $(M,N)$, $l-1\leq M \leq 3l-1$ by the mapping
	\begin{equation*}
	\varphi:(M,N)\to (M\,\mathrm{mod}\,2l,N)\quad\textit{for $M\geq l-1$}.
	\end{equation*}
	We will call this operation "twisting the lattice to a cylinder". The number of paths from the origin  $(0,0)$ to the point $(M,N)$ on the cylinder is equal to the number of paths from  $(0,0)$ to all pre-images $\varphi^{-1}(M,N)$ of $(M,N)$ on the auxiliary lattice. For $2l-1\leq M\leq 3l-2$ this gives
	$$
	\sum_{j=1}^{\lfloor\frac{N+l}{2l}\rfloor}M^{2j}_{(M+(2j-1)l,N)},
	$$
	and for $l-1\leq M\leq 2l-2$, we have
	$$
	\sum_{j=1}^{\lfloor\frac{N}{2l}\rfloor}M^{2j+1}_{(M+2jl,N)},
	$$
	\begin{figure}[h!]
		\centerline{\includegraphics[width=300pt]{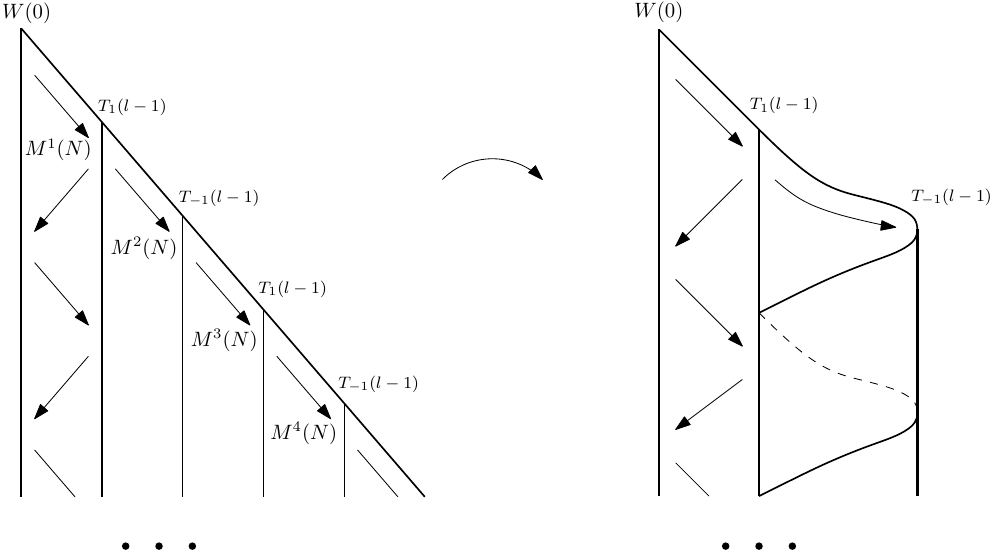}}
		\caption{Twisting of the auxiliary lattice into a cylinder.}
		\label{ttop}
	\end{figure}\\
	
	We want to prove that, after twisting into a cylinder, the number of paths computed above coincides with 
	the multiplicity of corresponding indecomposable $u_q(\lie{sl}_2)$-module in $(L(1))^{\otimes N}$. For this 
	we need to prove the identities:
	
	\begin{equation*}
	M^{(l)}_{T_{+}(m)}(N)=\sum_{j=1}^{\lfloor\frac{N+l}{2l}\rfloor}M^{2j}_{(m+(2j-1)l,N)},
	\end{equation*}
	\begin{equation*}
	M^{(l)}_{T_{-}(m)}(N)=\sum_{j=1}^{\lfloor\frac{N}{2l}\rfloor}M^{2j+1}_{(m+2jl,N)},
	\end{equation*}
	Here the left side of these identities is given by  (\ref{ind1}), (\ref{ind3}). Let us first prove the
	formula for $M^{(l)}_{T_{+}(m)}(N)$. The proof of the formula for $M^{(l)}_{T_{-}(m)}(N)$ is very similar.
	
	We will prove it by induction in $N$ with step $2l$. The base of the induction corresponds to values
	$l\leq N\leq3l-1$. We have to prove that for this values
	\begin{equation}\label{ind2}
	M^{(l)}_{T_{+}(m)}(N)=M^2_{(m+l,N)}.
	\end{equation}
	The left side of (\ref{ind2}) is
	\begin{equation*}
	M^{(l)}_{T_{+}(m)}(N)=F^{(N)}_{m+l}+F^{(N)}_{m+l-4l},
	\end{equation*}
	For the right side (\ref{mj})we obtain
	\begin{equation*}
	M^2_{(m+l,N)}=F^{(N)}_{m+l}+F^{(N)}_{m-l-2l}.
	\end{equation*}
	Clearly (\ref{ind2}) holds, which give us the base for induction.
	
	Consider now an arbitrary $N$. Define $n=\lfloor\frac{N-l}{2l}\rfloor$ and ssume that for 
	$2nl+l\leq N\leq2nl+3l-1$, the identity 
	\begin{equation*}
	M^{(l)}_{T_{+}(m)}(N)=\sum_{j=1}^{n}M^{2j}_{(m+(2j-1)l,N)},
	\end{equation*}
	holds. That is assume that for these values of $N$
	\begin{equation}\label{ded}
	\sum_{k=0}^{n-1}\big(k^2F^{(N)}_{m-l-2kl}+(k+1)^2 F^{(N)}_{m+l+2kl}\big)+n^2F^{(N)}_{m-l-2nl}=\sum_{j=1}^{n}M^{2j}_{(m+(2j-1)l,N)}.
	\end{equation}
	
	\begin{lemma}\label{lemmaqqbb} If the identity (\ref{ded}) holds for $N$ and $n=\lfloor\frac{N-l}{2l}\rfloor$, then 
		the following identity holds for the this value of $n$ and any $N'\geq N$ 
		\begin{eqnarray}\label{qwerty}
		\sum_{k=0}^{n-1}\big(k^2F^{(N^\prime)}_{m-l-2kl}+(k+1)^2 F^{(N^\prime)}_{m+l+2kl}\big)+n^2F^{(N^\prime)}_{m-l-2nl}=\\
		=\sum_{j=1}^{n}\Big(
		2^{2j-2}\big(\sum_{k=0}^{\lfloor\frac{N-(2j-1)l+1}{4l}\rfloor}P_{2j}(k) F^{(N^\prime)}_{m+4kl}+\sum_{k=0}^{\lfloor\frac{N-2jl}{4l}\rfloor}P_{2j}(k)F^{(N^\prime)}_{m-4kl-4jl} - \nonumber\\
		-\sum_{k=0}^{\lfloor\frac{N-(2j+1)l+1}{4l}\rfloor}Q_{2j}(k)F^{(N^\prime)}_{m+2l+4kl}-\sum_{k=0}^{\lfloor\frac{N-(2j+2)l}{4l}\rfloor}Q_{2j}(k)F^{(N^\prime)}_{m-4kl-2(2j+1)l} \big)\Big)\nonumber,
		\end{eqnarray}
		
	\end{lemma}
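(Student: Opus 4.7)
My approach is induction on $N' \geq N$. The base case $N' = N$ is obtained by simply unpacking the right-hand side of the hypothesis (\ref{ded}) via the explicit formula (\ref{mj}) for $M^{2j}_{(m + (2j-1)l, N)}$; this substitution literally produces the $N' = N$ instance of (\ref{qwerty}), so there is nothing to prove at this step beyond rearranging notation. The inductive step then propagates the identity in $N'$ using the fundamental Pascal-type recurrence for the classical $\lie{sl}_2$ multiplicities,
\[
F^{(N'+1)}_k = F^{(N')}_{k-1} + F^{(N')}_{k+1},
\]
which follows from the branching rule $V(k) \otimes V(1) = V(k-1) \oplus V(k+1)$.

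Denote the LHS and RHS of (\ref{qwerty}) by $L(N', m)$ and $R(N', m)$ respectively, viewed as functions of both $N'$ and $m$, with $n$, $l$, and the floor-function upper limits (which depend only on the fixed $N$) held constant. Because all the numerical coefficients $k^2$, $(k+1)^2$, $n^2$, $P_{2j}(k)$, $Q_{2j}(k)$ are independent of $N'$, applying the Pascal rule to every occurrence of $F^{(N'+1)}_\bullet$ in both expressions and regrouping by the index shift yields the system
\[
L(N'+1, m) = L(N', m-1) + L(N', m+1), \qquad R(N'+1, m) = R(N', m-1) + R(N', m+1).
\]
This is simply the observation that each term $F^{(N'+1)}_{m+c}$ splits as $F^{(N')}_{(m-1)+c} + F^{(N')}_{(m+1)+c}$, and the summands on each side of (\ref{qwerty}) group cleanly under this shift. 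Given the hypothesis in the form $L(N, m) = R(N, m)$ uniformly in $m$, the induction then closes via
\[
L(N'+1, m) - R(N'+1, m) = \bigl(L(N', m-1) - R(N', m-1)\bigr) + \bigl(L(N', m+1) - R(N', m+1)\bigr) = 0.
\]

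\textbf{Main obstacle.} The main subtlety I anticipate is bookkeeping at the boundaries of the floor-function summation limits on the right-hand side. Since these upper limits are frozen in terms of $N$, not $N'$, some terms may formally lie just outside the summation range but correspond to nonvanishing $F^{(N')}_\bullet$ once $N' > N$; conversely, when one applies the Pascal rule to the last included term of a $k$-sum, the shifted index produced may or may not coincide with a term that would have entered the sum had the upper limit been determined by $N'$ instead. One must therefore verify that the ``boundary'' terms on LHS and RHS match up exactly under the shifts $m \mapsto m \pm 1$, so that the recurrence $R(N'+1, m) = R(N', m-1) + R(N', m+1)$ holds on the nose. This comes down to a careful inspection of the four partial sums in (\ref{mj}) and the way their index ranges telescope against the three pieces of $L(N', m)$; I expect this bookkeeping to be routine but tedious, with the polynomials $P_{2j}, Q_{2j}$ playing a purely passive role since they do not involve $N'$ at all.
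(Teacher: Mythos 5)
Your plan has a genuine gap in the inductive step, and it is the central one, not the ``boundary bookkeeping'' you flag as the main obstacle.

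The Pascal recurrence $F^{(N'+1)}_k = F^{(N')}_{k-1} + F^{(N')}_{k+1}$ is indeed valid (including at negative indices with the usual odd-reflection extension of $F$), and it does yield both
\[
L(N'+1, m) = L(N', m-1) + L(N', m+1),
\qquad
R(N'+1, m) = R(N', m-1) + R(N', m+1),
\]
since each side is a fixed, $N'$-independent $\mathbb{Z}$-linear combination of the various $F^{(N')}_{m+c}$. But to close the induction you then write
\[
L(N'+1, m) - R(N'+1, m) = \bigl(L(N', m-1) - R(N', m-1)\bigr) + \bigl(L(N', m+1) - R(N', m+1)\bigr),
\]
and you invoke the hypothesis ``in the form $L(N,m)=R(N,m)$ uniformly in $m$.'' That is precisely what you do not have. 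The hypothesis (\ref{ded}) is a single identity at a single weight $m$ (in the application, $m = l + k_0$, a number determined by the tilting module under consideration). Knowing $D(N,m)=0$ for one $m$ does not give $D(N,m\pm1)=0$, and so your recursion produces $D(N+1,m)=D(N,m-1)+D(N,m+1)$ with no control over either summand. There is no way to make the two unknown neighbors cancel using only the single input $D(N,m)=0$; a two-term Pascal recursion needs two independent pieces of initial data per step.

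This is not a bookkeeping subtlety but a structural problem: both $L$ and $R$ are of the form $\sum_c a_c F^{(\cdot)}_{m+c}$ with coefficients independent of $N'$ \emph{and} of $m$, so ``$L=R$ for one $(N,m)$'' is exactly one linear constraint on the differences $a_c - b_c$, far short of forcing them all to vanish. The paper closes exactly this gap by a different mechanism: it treats (\ref{ded}) as a coefficient identity, writes both sides as $\sum_k L^\pm_k F^{(N)}_{\bullet}$ and $\sum_k R^\pm_k F^{(N)}_{\bullet}$ with $N$-independent coefficients, and then extracts $L^\pm_k = R^\pm_k$ one at a time by letting $N$ range over consecutive windows $l\le N\le 2l-1$, $2l\le N\le 3l-1$, etc., where all but one additional $F^{(N)}_\bullet$ vanish. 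Once the coefficients on the two sides are shown equal, the identity holds for every $N'$ (and indeed every $m$) automatically. In other words, the paper supplies the missing independent constraints by varying $N$, not by shifting $m$; your approach would need an analogous supply of initial data (e.g.\ knowing $D(N,m')=0$ for a full window of $m'$, or for all smaller $N$) that the hypothesis does not grant.

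To make your route work you would have to either (i) first establish the coefficient equality by a $N$-varying argument, at which point the Pascal induction becomes superfluous, or (ii) strengthen the input to a two-parameter family of base cases. As written, the inductive step does not go through.
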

	\begin{proof}
		The RHS of (\ref{ded}) can be expanded using (\ref{auxmod}). For fixed value of $n=n_0$ coefficients of Catalan numbers $F^{(N)}_{m-l+2jl}$ and $F^{(N)}_{m-l-2jl}$ on both sides of (\ref{ded}) don't depend on $N$, so for simplicity we can write
		\begin{eqnarray}\label{lem1}
		\sum_{k=0}^{n_0-1}\big(L^{-}_kF^{(N)}_{m-l-2kl}+L^{+}_k F^{(N)}_{m+l+2kl}\big)+L^{-}_{n_0}F^{(N)}_{m-l-2n_0l}=\\
  =\sum_{k=0}^{n_0-1}\big(R^{-}_kF^{(N)}_{m-l-2kl}+R^{+}_k F^{(N)}_{m+l+2kl}\big)+R^{-}_{n_0}F^{(N)}_{m-l-2n_0l}.\nonumber
		\end{eqnarray}
		For $l\leq N\leq 2l-1$, (\ref{lem1}) becomes
		\begin{equation*}
		L^{+}_0F^{(N)}_{m+l}=R^{+}_0F^{(N)}_{m+l},
		\end{equation*}
		since all other Catalan numbers are $0$ for such values of $N$. And we get
		\begin{equation}\label{lem11}
		L^{+}_0=R^{+}_0.
		\end{equation}
		For $2l\leq N\leq 3l-1$, (\ref{lem1}) becomes
		\begin{equation*}
		L^{-}_1F^{(N)}_{m-l-2l}+L^{+}_0 F^{(N)}_{m+l}=R^{-}_1F^{(N)}_{m-l-2l}+R^{+}_0 F^{(N)}_{m+l},
		\end{equation*}
		and taking into account (\ref{lem11}), we get
		\begin{equation*}
		L^{-}_1=R^{-}_1.
		\end{equation*}
		Carrying out this procedure till the end, we get a system on $2n_0$ identities
		\begin{eqnarray*}
		L^{+}_0 &=& R^{+}_0,\\
		L^{\pm}_j &=& R^{\pm}_j,\quad j=1,\ldots, n_0-1,\\
		L^{-}_{n_0} &=& R^{-}_{n_0}.
		\end{eqnarray*}
		Since identities in the system don't depend on $N$, we see that when $n_0$ is fixed formula (\ref{ded}) should hold for $N^\prime\geq N$, which concludes the proof of the lemma.
	\end{proof}\par
	
	Now let us prove the induction step. We should prove that the induction assumption (\ref{ded}) implies the identity
    \begin{equation}\label{needtoprove}
	\sum_{k=0}^{n}\big(k^2F^{(N+2l)}_{m-l-2kl}+(k+1)^2 F^{(N+2l)}_{m+l+2kl}\big)+(n+1)^2F^{(N+2l)}_{m-l-2nl}=\sum_{j=1}^{n+1}M^{2j}_{(m+(2j-1)l,N+2l)}.
	\end{equation}
	Let us write the left side as
	\begin{eqnarray*}
	\Big(\big(\sum_{k=0}^{n-1}k^2F^{(N+2l)}_{m-l-2kl}+(k+1)^2 F^{(N+2l)}_{m+l+2kl}\big)+n^2F^{(N+2l)}_{m-l-2nl}\Big)+\nonumber \\
	+(n+1)^2 F^{(N+2l)}_{m+l+2nl}+(n+1)^2F^{(N+2l)}_{m-l-2(n+1)l}.
	\end{eqnarray*}
	For even $n$ the right side of (\ref{needtoprove}) can be written as 
	\begin{eqnarray}\label{id-proof}
	\sum_{j=1}^{n+1}M^{2j}_{(m+(2j-1)l,N+2l)}=\Big(\sum_{j=1}^{n}\Big(
	2^{2j-2}\big(\sum_{k=0}^{\lfloor\frac{N-(2j-1)l+1}{4l}\rfloor}P_{2j}(k) F^{(N+2l)}_{m+4kl}+\\
    \sum_{k=0}^{\lfloor\frac{N-2jl}{4l}\rfloor}P_{2j}(k)F^{(N+2l)}_{m-4kl-4jl}-\sum_{k=0}^{\lfloor\frac{N-(2j+1)l+1}{4l}\rfloor}Q_{2j}(k)F^{(N+2l)}_{m+2l+4kl}-\sum_{k=0}^{\lfloor\frac{N-(2j+2)l}{4l}\rfloor}Q_j(k)F^{(N+2l)}_{m-4kl-2(2j+1)l} \big)\Big) \Big)+\nonumber\\
	+\sum_{j=1,\text{$j$ odd}}^{n-1}2^{2j-2}P_{2j}(\frac{n-(j-1)}{2})\Big(F^{(N+2l)}_{m+(2j-1)l+4(\frac{n-(j-1)}{2})l}+F^{(N+2l)}_{m+(2j-1)l-4(\frac{n-(j-1)}{2})l-4jl}\Big)-\nonumber\\
	-\sum_{j=2,\text{$j$ even}}^{n}2^{2j-2}Q_{2j}(\frac{n-j}{2})\Big(F^{(N+2l)}_{m+(2j-1)l+2l+4(\frac{n-j}{2})l}+F^{(N+2l)}_{m+(2j-1)l-4(\frac{n-j}{2})l-2(2j+1)l}\Big)+\nonumber\\
	+M^{2(n+1)}_{(m+(2n+1)l,N+2l)},\nonumber
	\end{eqnarray}
	and for odd $n$ as	
	\begin{eqnarray*}
	\sum_{j=1}^{n+1}M^{2j}_{(m+(2j-1)l,N+2l)}=\Big(\sum_{j=1}^{n}\Big(
	2^{2j-2}\big(\sum_{k=0}^{\lfloor\frac{N-(2j-1)l+1}{4l}\rfloor}P_{2j}(k) F^{(N+2l)}_{m+4kl}+ \nonumber\\
	+\sum_{k=0}^{\lfloor\frac{N-2jl}{4l}\rfloor}P_{2j}(k)F^{(N+2l)}_{m-4kl-4jl}-\sum_{k=0}^{\lfloor\frac{N-(2j+1)l+1}{4l}\rfloor}Q_{2j}(k)F^{(N+2l)}_{m+2l+4kl}-\sum_{k=0}^{\lfloor\frac{N-(2j+2)l}{4l}\rfloor}Q_j(k)F^{(N+2l)}_{m-4kl-2(2j+1)l} \big)\Big) \Big)+\nonumber\\
	+\sum_{j=2,\text{$j$ even}}^{n-1}2^{2j-2}P_{2j}(\frac{n-(j-1)}{2})\Big(F^{(N+2l)}_{m+(2j-1)l+4(\frac{n-(j-1)}{2})l}+F^{(N+2l)}_{m+(2j-1)l-4(\frac{n-(j-1)}{2})l-4jl}\Big)-\nonumber\\
	-\sum_{j=1,\text{$j$ odd}}^{n}2^{2j-2}Q_{2j}(\frac{n-j}{2})\Big(F^{(N+2l)}_{m+(2j-1)l+2l+4(\frac{n-j}{2})l}+F^{(N+2l)}_{m+(2j-1)l-4(\frac{n-j}{2})l-2(2j+1)l}\Big)+\nonumber\\
	+M^{2(n+1)}_{(m+(2n+1)l,N+2l)}.
	\end{eqnarray*}
	Let us focus on even $n$. For odd $n$ the proof is similar. Using results of Lemma \ref{lemmaqqbb}
	we can simplify the identity (\ref{needtoprove}) to 
	\begin{eqnarray}\label{even}
	(n+1)^2(F^{(N+2l)}_{m+l+2nl}+F^{(N+2l)}_{m-l-2(n+1)l})=M^{2(n+1)}_{(m+(2n+1)l,N+2l)}+\\
 \sum_{j=1,\text{$j$ odd}}^{n-1}2^{2j-2}P_{2j}(\frac{n-(j-1)}{2})\Big(F^{(N+2l)}_{m+(2j-1)l+4(\frac{n-(j-1)}{2})l}+F^{(N+2l)}_{m+(2j-1)l-4(\frac{n-(j-1)}{2})l-4jl}\Big)- \nonumber \\
	\sum_{j=2,\text{$j$ even}}^{n}2^{2j-2}Q_{2j}(\frac{n-j}{2})\Big(F^{(N+2l)}_{m+(2j-1)l+2l+4(\frac{n-j}{2})l}+F^{(N+2l)}_{m+(2j-1)l-4(\frac{n-j}{2})l-2(2j+1)l}\Big)\nonumber
	\end{eqnarray}\par
	Let us write left side of this identity as 
	\begin{equation*}
	(n+1)^2( F^{(N+2l)}_{m+l+2nl}+F^{(N+2l)}_{m-l-2(n+1)l})=(n+1)^2( F^{(N+2l)}_{m+(2n+1)l}+F^{(N+2l)}_{m-(2n+3)l}).
	\end{equation*}
	Now, taking into account that (\ref{mj}) implies the identity 
	\begin{eqnarray*}
		M^{2(n+1)}_{(m+(2n+1)l,N+2l)}=2^{2n}P_{2(n+1)}(0)\Big(F^{(N+2l)}_{m+(2n+1)l}+F^{(N+2l)}_{m-(2n+3)l} \Big),
	\end{eqnarray*}
	and that
	\begin{equation*}
	F^{(N+2l)}_{m+(2j-1)l+4(\frac{n-(j-1)}{2})l}+F^{(N+2l)}_{m+(2j-1)l-4(\frac{n-(j-1)}{2})l-4jl}=F^{(N+2l)}_{m+(2n+1)l}+F^{(N+2l)}_{m-(2n+3)l},
	\end{equation*}
	\begin{equation*}
	F^{(N+2l)}_{m+(2j-1)l+2l+4(\frac{n-j}{2})l}+F^{(N+2l)}_{m+(2j-1)l-4(\frac{n-j}{2})l-2(2j+1)l}=F^{(N+2l)}_{m+(2n+1)l}+F^{(N+2l)}_{m-(2n+3)l},
	\end{equation*}
	we can write the right side of  (\ref{even}) as
	\begin{equation*}
	\Big(\sum_{j=1,\text{$j$ odd}}^{n+1}2^{2j-2}P_{2j}(\frac{n-(j-1)}{2})-\sum_{j=2,\text{$j$ even}}^{n}2^{2j-2}Q_{2j}(\frac{n-j}{2})\Big)\Big(F^{(N+2l)}_{m+(2n+1)l}+F^{(N+2l)}_{m-(2n+3)l}\Big).
	\end{equation*}
	So, we have to prove the following statement
	
	\begin{lemma}
		For even $n$ the identity
		\begin{eqnarray}\label{ci}
		(n+1)^2=\sum_{j=1,\text{$j$ odd}}^{n+1}2^{2j-2}P_{2j}(\frac{n-(j-1)}{2})-\sum_{j=2,\text{$j$ even}}^{n}2^{2j-2}Q_{2j}(\frac{n-j}{2}).
		\end{eqnarray}
		holds.
	\end{lemma}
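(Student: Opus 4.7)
The plan is to prove (\ref{ci}) by interpreting both sides as coefficients of $y^m$ (where $n=2m$) in formal generating functions and then verifying an identity of rational functions in $y$.

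First I would compute closed forms for $\mathcal P_j(y) := \sum_{k \geq 0} P_{2j}(k) y^k$ and $\mathcal Q_j(y) := \sum_{k \geq 0} Q_{2j}(k) y^k$. Swapping the order of summation and applying $\sum_{k \geq 0}\binom{k+d}{d} y^k = (1-y)^{-(d+1)}$ together with the even/odd parts of $(1\pm z)^{2j-2}$, where $z = \sqrt y$, yields
\[
\mathcal P_j(y) = \tfrac{1}{2}\!\left[\tfrac{1}{(1-z)^{2j-1}(1+z)} + \tfrac{1}{(1+z)^{2j-1}(1-z)}\right], \quad
\mathcal Q_j(y) = \tfrac{1}{2z}\!\left[\tfrac{1}{(1-z)^{2j-1}(1+z)} - \tfrac{1}{(1+z)^{2j-1}(1-z)}\right].
\]

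Next, write $n = 2m$, parametrize the two sums in (\ref{ci}) by $j = 2r+1$ and $j = 2r$ respectively, and form the generating function
\[
G(y) := \sum_{m \geq 0} y^m \left[ \sum_{r=0}^{m} 2^{4r} P_{4r+2}(m-r) - \sum_{r=1}^{m} 2^{4r-2} Q_{4r}(m-r) \right].
\]
Swapping orders so that $r$ is outermost and setting $a = 16 y$, the inner sums over $m \geq r$ recover $\mathcal P_{2r+1}(y)$ and $\mathcal Q_{2r}(y)$ from the previous step, and the remaining geometric series in $r$ produce denominators $(1-z)^4 - a$ and $(1+z)^4 - a$.

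The pivotal algebraic observation is the difference-of-squares factorization
\[
(1 \mp z)^4 - 16 z^2 = \bigl[(1 \mp z)^2 - 4z\bigr]\bigl[(1 \mp z)^2 + 4z\bigr] = (1 \mp 6z + z^2)(1 \pm z)^2.
\]
Grouping the four rational summands in $G$ by their two denominators and reducing via this factorization cancels the quadratic factors $1 \mp 6z + z^2$ entirely, so the expression collapses to
\[
G(y) = \frac{1-z}{2(1+z)^3} + \frac{1+z}{2(1-z)^3} = \frac{(1-z)^4 + (1+z)^4}{2(1-z^2)^3} = \frac{1 + 6y + y^2}{(1-y)^3}.
\]
Since $\sum_{m \geq 0}(2m+1)^2 y^m = (1 + 6y + y^2)/(1-y)^3$ by a standard computation, equating coefficients of $y^m$ gives exactly (\ref{ci}) with $n = 2m$.

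I expect the main obstacle to be the bookkeeping of the four rational-function terms after the geometric sums are evaluated; once they are organized by their two denominators and reduced by the factorization above, the collapse to $(1+6y+y^2)/(1-y)^3$ is essentially automatic. An induction on $m$ using Pascal-type recurrences for $P_{2j}$ and $Q_{2j}$ is also possible, but I would favor the generating-function route because it makes the structural reason for the identity transparent.
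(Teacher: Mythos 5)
Your proof is correct, and it takes a genuinely different route from the paper's. The paper proceeds by induction on $n$: it subtracts the identity at $n-1$ from the one at $n$, uses the Pascal-type differences $P_j(k)-P_j(k-1)=\binom{j+2k-3}{j-3}$ and $Q_j(k)-Q_j(k-1)=\binom{j+2k-2}{j-3}$ to reduce everything to the single alternating-binomial identity $n=\sum_{j=0}^{n-1}(-1)^{j+1}4^j\binom{j+n}{2j+1}$, and then solves a coupled first-order recursion $f(n)=f(n-1)+g(n)$, $g(n)=g(n-1)-4f(n-1)$ to obtain $f(n)=(-1)^n n$. Your generating-function argument bypasses all of that: the closed forms for $\mathcal P_j$, $\mathcal Q_j$ in $z=\sqrt y$ are right, the geometric sums in $r$ produce denominators $(1\mp z)^4-16z^2$, and the factorizations $(1-z)^4-16z^2=(1-6z+z^2)(1+z)^2$ and $(1+z)^4-16z^2=(1+6z+z^2)(1-z)^2$ do cancel the quadratic factors exactly as you claim, collapsing $G(y)$ to $\tfrac{1-z}{2(1+z)^3}+\tfrac{1+z}{2(1-z)^3}=(1+6y+y^2)/(1-y)^3=\sum_{m\ge0}(2m+1)^2y^m$. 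One small point worth spelling out if you write this up in full: when you interchange $\sum_k$ and $\sum_i$ and shift $k\mapsto k+i$, the extra terms with $k-i<0$ vanish because $\binom{2j-2}{2i}=0$ forces $i\le j-1<2j-1$, so $\binom{k-i+2j-2}{2j-2}=0$ in the range $-i\le k-i<0$; this is what justifies $\sum_{k\ge0}\binom{k-i+2j-2}{2j-2}y^k=y^i/(1-y)^{2j-1}$. Your approach buys transparency — the algebraic reason the identity closes up is visible in the factorization — at the cost of the square-root substitution and rational-function bookkeeping; the paper's inductive route is more elementary in its ingredients but hides the structure behind a recursion.
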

	\begin{proof}
		Let us prove it by induction. For $n=0$ the identity holds which gives the base for induction.
		The inductive assumption for $n-1$ is
		\begin{eqnarray}\label{ci2}
		(n-1)^2=\sum_{j=1,\text{$j$ odd}}^{n-1}2^{2j-2}P_{2j}(\frac{n-1-(j-1)}{2})-\sum_{j=2,\text{$j$ even}}^{n-2}2^{2j-2}Q_{2j}(\frac{n-1-j}{2}),
		\end{eqnarray}
		We need to prove (\ref{ci}). Note that 
		\begin{equation*}
		P_{j}(k)-P_{j}(k-1)=\binom{j+2k-3}{j-3},
		\end{equation*}
		\begin{equation*}
		Q_{j}(k)-Q_{j}(k-1)=\binom{j+2k-2}{j-3}.
		\end{equation*}
		Using these identities subtract  (\ref{ci2}) from (\ref{ci})
		\begin{eqnarray*}
		4n=\Big(\sum_{j=1,\text{$j$ odd}}^{n-1}-\sum_{j=2,\text{$j$ even}}^{n-2}\Big)2^{2j-2}\binom{j+n-2}{2j-3}+\\ \nonumber
		+2^{2n}P_{2n+2}(0)-2^{2n-2}Q_{2n}(0).
		\end{eqnarray*}
		Because $P_j(0)=1$ and $Q_j(0)=j-2$, this formula simplifies to	
		\begin{equation}\label{ci3}
		n=\sum_{j=0}^{n-1}(-1)^{j+1}4^j\binom{j+n}{2j+1}.
		\end{equation}
		Denote the expression (\ref{ci3}) by $f(n)$. Using the binomial recursion
		\begin{equation*}
		\binom{n}{k}=\binom{n-1}{k}+\binom{n-1}{k-1}
		\end{equation*}
		we obtain
		\begin{equation*}
		f(n)=\sum_{j=0}^{n-1}(-1)^{j+1}4^j\binom{j+n-1}{2j+1}+\sum_{j=0}^{n-1}(-1)^{j+1}4^j\binom{j+n-1}{2j}.
		\end{equation*}
		where in the first sum the last term is zero. If we denote the last sum by $g(n)$, we have 
		\begin{equation}\label{rec1}
		f(n)=f(n-1)+g(n).
		\end{equation}
		After similar manipulations for $g(n)$, we obtain the identity
		\begin{equation}\label{rec2}
		g(n)=g(n-1)-4f(n-1).
		\end{equation}
		Formulae (\ref{rec1}), (\ref{rec2}) define the recursion for $f(n)$, $g(n)$. The induction base (\ref{ci2}), (\ref{rec1})
		is a boundary condition for this recursion.  Solving it we have
		\begin{eqnarray*}
		f(n)=(-1)^n n,\\
		g(n)=(-1)^{n}(2n-1),
		\end{eqnarray*}
		This proves (\ref{ci3}) for even $n$ and finishes the proof of our lemma.
	\end{proof}
	Thus, we proved the theorem for $T_{+}(m)$ and even values of $\lfloor\frac{N-l}{2l}\rfloor$.
	The proof of other cases is similar. 
\end{proof}

\subsection{Proof of theorem \ref{th_u} (b)}
\begin{proof}
Let us prove the second part. Here we will prove the first identity. 
The proof of others is very similar. According to the formula derived earlier 
from representation theory we have 
\begin{equation*}
m^{(l)}_{l+k_0,+}(N)  = \sum_{j \in \mathbb{Z},\, {\rm even} } m^{(l)}_{l+k_0,j}(N),
\end{equation*}
where
\begin{equation*}
m^{(l)}_{l+k_0,\pm j}(N)  = \sum_{m \geq \lfloor \frac{j}{2} \rfloor} M^{(l)}_{l+k_0+ j({\rm mod}2)l + 2ml}(N)
\end{equation*}
and
\begin{eqnarray*}
M_{k_1 l +k_0}^{(l)}(N) =   
\sum\limits_{n=0}^{\lfloor\frac{N-k}{2l}\rfloor  } F_{(k_1+2n)l + k_0}^{(N)}- \sum\limits_{n=0}^{\lfloor\frac{N-(k_1+ 2)l +k_0+2}{2l}\rfloor} F_{(k_1+2n +2)l -k_0-2}^{(N)}
\end{eqnarray*}
After direct substitution we obtain
\begin{eqnarray*}
&m^{(l)}_{l+k_0,+}(N)=\sum\limits_{j \in \mathbb{Z},\, {\rm even} } \sum\limits_{m \geq \lfloor \frac{|j|}{2} \rfloor} M^{(l)}_{l+k_0+ |j|({\rm mod}2)l + 2ml}(N)=\sum\limits_{j \in \mathbb{Z},\, {\rm even} } \sum\limits_{m \geq 0} M^{(l)}_{l+k_0+|j|l+2ml}(N)=\\
&=\sum\limits_{j \in \mathbb{Z},\, {\rm even} } \sum\limits_{m \geq 0}\Big(\sum\limits_{n\geq 0} F_{(|j|+1+2m+2n)l + k_0}^{(N)} 
- \sum\limits_{n\geq 0} F_{(|j|+1+2m+2n+2)l -k_0-2}^{(N)}\Big)=\\
&=\sum\limits_{j \in \mathbb{Z},\, {\rm even} }\Big( \sum\limits_{m \geq 0}(m+1)F_{(|j|+1+2m)l + k_0}^{(N)} 
-  \sum\limits_{m \geq 1} m F_{(|j|+1+2m)l -k_0-2}^{(N)}\Big)=\\
&=\sum\limits_{m \geq 0}(m+1)^2 F_{(1+2m)l + k_0}^{(N)} + \sum\limits_{m \geq 1} m^2 F_{-(1+2m)l+k_0}^{(N)}.
\end{eqnarray*}
Here we used the identities
\begin{equation*}
-F_{kl -k_0-2}^{(N)}=F_{-kl+k_0}^{(N)},
\end{equation*}
\begin{equation*}
\sum_{j \in \mathbb{Z},\, {\rm even}, |j|+2\tilde{m}=2m }(\tilde{m}+1)=(m+1)+2\sum_{j=0}^{m-1}(m-j)=(m+1)^2,
\end{equation*}
\begin{equation*}
\sum_{j \in \mathbb{Z},\, {\rm even}, |j|+2\tilde{m}=2m }\tilde{m}=m+2\sum_{j=1}^{m}(m-j)=m^2.
\end{equation*}
Thus the formula for $m^{(l)}_{l+k_0,+}(N)$ derived earlier from representation theory matches the one derived from the analysis of lattice paths. 
\end{proof}

\section{Proof of Theorems \ref{multasymptmain}, \ref{theomch}, \ref{planchfluct}}\label{prthu2}
\subsection{Proof of Theorem \ref{multasymptmain}}\label{apmultas}
\begin{proof}
    Consider the case $k_0\neq l-1$. Since $N\equiv k\,\mathrm{mod}\,2$, we can write $k_1=2s+\gamma$ where $s\to\infty$.

    The asymptotic of a $j$th term from the first sum in (\ref{p-numbers}) is
	\begin{eqnarray*}
		\frac{(2lN_1+N_0)!(k_1l+k_0+2jl+1)}{\big(\frac{2lN_1+N_0-k_1l-k_0-2jl}{2}\big)! \big(\frac{2lN_1+N_0+k_1l+k_0+2jl}{2}+1\big)!}=\frac{(2\xi lN_1+\gamma l+k_0+2jl+1)}{\frac{2lN_1(1+\xi)+\gamma l+k_0+2jl}{2}+1}\times\\
		\times\prod_{m=1}^{N_0}(2lN_1+m)\frac{(2lN_1)!}{\big(\frac{1-\xi-\epsilon_-}{2}2lN_1\big)! \big(\frac{1+\xi+\epsilon_+}{2}2lN_1\big)!}=
	\end{eqnarray*}
	Here 
    $$
    \frac{k_0+\gamma l+2jl\pm N_0}{2lN_1}=\epsilon_\pm
    $$ 
    and $j$ is fixed as $N_1, k_1\to \infty$. Using Stirling formula and the Taylor expansion $(x\pm\frac{\epsilon}{2})\mathrm{ln}(x\pm\frac{\epsilon}{2})=x\mathrm{ln}x\pm\frac{\epsilon}{2}\mathrm{ln}x\pm\frac{\epsilon}{2}+O(\epsilon^2)$ we get
	\begin{eqnarray*}
		&=\frac{2\xi(2lN_1)^{N_0}}{1+\xi} \sqrt{\frac{4}{2\pi 2lN_1(1-\xi)(1+\xi)}}\mathrm{exp}\Big((2lN_1)\mathrm{ln}(2lN_1)- \left(\frac{1-\xi -\epsilon_-}{2}2lN_1\right)\mathrm{ln}\left(\frac{1-\xi-\epsilon_-}{2}2lN_1\right)-\\
		&-\left(\frac{1+\xi+\epsilon_+}{2}2lN_1\right)\mathrm{ln}\left(\frac{1+\xi +\epsilon_+}{2}2lN_1\right)-2lN_1+\frac{1-\xi -\epsilon_-}{2}2lN_1+\frac{1+\xi +\epsilon_+}{2}2lN_1\Big)\left(1+O\left(\frac{1}{N_1}\right)\right)=\\
		&=\frac{2^{1+N_0}\xi}{\sqrt{\pi lN_1 (1-\xi^2)^{1+N_0}}} e^{2lN_1\big(- (\frac{1-\xi}{2})\mathrm{ln}(\frac{1-\xi}{2})-(\frac{1+\xi}{2})\mathrm{ln}(\frac{1+\xi }{2})\big)}\frac{1}{1+\xi}\Big(\frac{1-\xi}{1+\xi}\Big)^{\frac{k_0+\gamma l}{2}+jl}\left(1+O\left(\frac{1}{N_1}\right)\right).
	\end{eqnarray*}

	Similarly, the asymptotic of the $j$-th term in the second sum of (\ref{p-numbers})is
	\begin{eqnarray*}
		-\frac{2^{1+N_0}\xi}{\sqrt{\pi lN_1 (1-\xi^2)^{1+N_0}}} e^{2lN_1\big(- (\frac{1-\xi}{2})\mathrm{ln}(\frac{1-\xi}{2})-(\frac{1+\xi}{2})\mathrm{ln}(\frac{1+\xi}{2})\big)}\frac{1}{1-\xi}\Big(\frac{1-\xi}{1+\xi}\Big)^{-\frac{k_0-\gamma l}{2}+jl}\left(1+O\left(\frac{1}{N_1}\right)\right).
	\end{eqnarray*}

	Note that since $N\geq k_1l+k_0$, we have $\xi\in(0,1)$. Combining previous formulae 
	and the fact that as $N\to\infty$ the contributions from large $j$ are exponentially suppressed,
	we obtain the following asymptotic: 
	\begin{eqnarray*}
		M^{(l)}_{T(k)}(N)=\frac{2^{1+N_0}\xi}{\sqrt{\pi lN_1 (1-\xi^2)^{1+N_0}}}e^{2lN_1S(\xi)}\times\\
		\times\Big(\frac{1}{1+\xi}\sum_{j=0}^\infty\Big(\frac{1-\xi}{1+\xi}\Big)^{\frac{k_0+\gamma l}{2}+jl}-\frac{1}{1-\xi}\sum_{j=1}^\infty\Big(\frac{1-\xi}{1+\xi}\Big)^{-\frac{k_0-\gamma l}{2}+jl} \Big)\left(1+O\left(\frac{1}{N_1}\right)\right)=\\
		=\frac{2^{1+N_0}\xi}{\sqrt{\pi lN_1 (1-\xi^2)^{1+N_0}}}e^{2lN_1S(\xi)}\Big(\frac{1-\xi}{1+\xi}\Big)^{\frac{\gamma l}{2}}\times\\
        \times\Big(\frac{1}{1+\xi}\Big(\frac{1-\xi}{1+\xi}\Big)^{\frac{k_0}{2}}-\frac{1}{1-\xi}\Big(\frac{1-\xi}{1+\xi}\Big)^{-\frac{k_0}{2}+l} \Big)\sum_{j=0}^\infty\Big(\frac{1-\xi}{1+\xi}\Big)^{jl}\left(1+O\left(\frac{1}{N_1}\right)\right)
	\end{eqnarray*}
	Summing up the geometric series
	\begin{equation*}
		\sum_{j=0}^\infty\Big(\frac{1-\xi}{1+\xi}\Big)^{jl}=\frac{1}{1-\Big(\frac{1-\xi}{1+\xi}\Big)^{l}},
	\end{equation*}
	we arrive to the result 
	\begin{eqnarray*}
		&M^{(l)}_{T(k)}(N)=\frac{2^{1+N_0}\xi}{\sqrt{\pi lN_1 (1-\xi^2)^{1+N_0}}}e^{2lN_1S(\xi)}\Big(\frac{1-\xi}{1+\xi}\Big)^{\frac{\gamma l}{2}}\frac{\frac{1}{1+\xi}\Big(\frac{1+\xi}{1-\xi}\Big)^{\frac{l-k_0}{2}}-\frac{1}{1-\xi}\Big(\frac{1+\xi}{1-\xi}\Big)^{-\frac{l-k_0}{2}}}{\Big(\frac{1+\xi}{1-\xi}\Big)^{\frac{l}{2}}-\Big(\frac{1+\xi}{1-\xi}\Big)^{-\frac{l}{2}}}\left(1+O\left(\frac{1}{N_1}\right)\right).
	\end{eqnarray*}

	Multiplicity for the case $k_0=l-1$ consists of just one term, asymptotic of which has already been found, so we get
	\begin{eqnarray*}
		M^{(l)}_{T(lk_1+l-1)}(N)=\frac{2^{1+N_0}\xi}{\sqrt{\pi lN_1(1-\xi^2)^{1+N_0}}}e^{2lN_1S(\xi)}\frac{1}{1+\xi}\Big(\frac{1-\xi}{1+\xi}\Big)^{\frac{l(1+\gamma)-1}{2}}\left(1+O\left(\frac{1}{N_1}\right)\right).
	\end{eqnarray*}
\end{proof}

\subsection{Proof of Theorem \ref{theomch}}\label{approofchar}
\begin{proof}
    Here we outline the proof of the theorem omitting necessary estimates.

    We have to prove that for every continuous bounded test function $h(\alpha,k_0)$, $\alpha\in\mathbb{R}$, $k_0=0,1,\ldots,l-1$, as $N_1\to\infty$ with the assumptions of the Theorem \ref{theomch} the sum
    $$
    \sum\limits_{k=k_1l+k_0} p^{(l)}_N(k;t)h\left(\frac{k_1-2N_1 \xi_0}{\sqrt{2N_1}},k_0\right)
    $$
    converges to
    \begin{equation}\label{conv}
    \sum\limits_{k_0=0}^{l-1}\int\limits_{-\infty}^{+\infty}p_1(k_0;t)p_2(\alpha;t)h(\alpha,k_0)d\alpha.
    \end{equation}
    Now we expand formula (\ref{charas}) in the vicinity of $\xi=\xi_0$, where $\xi=\xi_0+\frac{\alpha}{\sqrt{2N_1}}$. Note that
    \begin{equation*}
		S(\xi_0)-\mathrm{ln}(e^t+e^{-t})+t\xi_0=0,
	\end{equation*}
	\begin{equation*}
		1-\xi_0=\frac{2e^{-t}}{e^t+e^{-t}},\quad 1+\xi_0=\frac{2e^{t}}{e^t+e^{-t}}.
	\end{equation*}
    Hence, we obtain
    \begin{equation*}
		p^{(l)}_N(\alpha,k_0;t)=v_{k_0}\sqrt{\frac{1}{\pi lN_1}}\mathrm{cosh}(t)\mathrm{exp}\left({-\frac{\alpha^2l(\mathrm{cosh}(t))^2}{2}}\right)\left(1+o\left(1\right)\right),
	\end{equation*}
	where
	\begin{equation*}
		v_{k_0}=\begin{cases}
			\frac{(e^{t(k_0+1)}+e^{-t(k_0+1)})(e^{t(l-1-k_0)}-e^{-t(l-1-k_0)})}{e^{tl}-e^{-tl}},& \textit{if $k_0=0,\ldots,l-2$},\\
			1,& \textit{if $k_0=l-1$}.
		\end{cases}
	\end{equation*}
	This expression can be written as
	\begin{equation*}
		p^{(l)}_N(\alpha,k_0,t)=p_1(k_0;t)p_2(\alpha;t)\sqrt{\frac{2}{N_1}}\left(1+o\left(1\right)\right),
	\end{equation*}
	where
	\begin{equation*}
		p_1(k_0;t)=\begin{cases}
			\frac{(e^{t(k_0+1)}+e^{-t(k_0+1)})(e^{t(l-1-k_0)}-e^{-t(l-1-k_0)})}{l(e^{tl}-e^{-tl})},& \textit{if $k_0=0,\ldots,l-2$},\\
			\frac{1}{l},& \textit{if $k_0=l-1$}
		\end{cases},
	\end{equation*}
	and
	\begin{equation*}
		p_2(\alpha;t)=\sqrt{\frac{l}{2\pi}}\mathrm{cosh}(t)\mathrm{exp}\left({-\frac{\alpha^2l}{2}\left(\mathrm{cosh}(t)\right)^2}\right),
	\end{equation*}
    which gives us density functions $p_1(k_0;t)$ and $p_2(\alpha;t)$ as in the statement of the Theorem \ref{theomch}.

    Thus, we have
    \begin{eqnarray}\label{rsum}
    \sum\limits_{k} p^{(l)}_N(k;t)h\left(\frac{k_1-2N_1 \xi_0}{\sqrt{2N_1}},k_0\right)\to\\
    \to\sum\limits_{k_0,j}p_1(k_0;t)p_2(\alpha_j;t)\sqrt{\frac{2}{N_1}}h(\alpha_j,k_0)(1+o(1)).\nonumber
    \end{eqnarray}
    Here summation over $j$ corresponds to the summation over $k_1$ in the vicinity of the critical point $\xi_0$. Because $k_1$ changes in the increments of $2$ ($N\equiv k\,\mathrm{mod}\, 2$) and $k_1=2N_1\xi_0+\sqrt{2N_1}\alpha$, in the Riemann sum in the right hand side of (\ref{rsum}) we have $\Delta k_1=\sqrt{\frac{N_1}{2}}\Delta\alpha$. Hence, we obtain the result (\ref{conv}).
\end{proof}

\subsection{Proof of Theorem \ref{planchfluct}}\label{approofplanch}
\begin{proof}
    Outline of the proof of the Theorem \ref{planchfluct} repeats the ideas of the proof of the Theorem \ref{theomch}. Expanding formula (\ref{Planchas}) in the vicinity of the critical point, where $\xi=\frac{\alpha}{\sqrt{2N_1}}$, we obtain
    \begin{equation*}
	p^{(l)}_N(\alpha,k_0)=p_1(k_0)p_2(\alpha)\sqrt{\frac{2}{N_1}},
	\end{equation*}
	where
	\begin{equation*}
	p_1(k_0)=\begin{cases}
	\frac{2(l-1-k_0)}{l^2},& \textit{if $k_0=0,\ldots,l-2$},\\
	\frac{1}{l},& \textit{if $k_0=l-1$}
	\end{cases},
	\end{equation*}
	and
	\begin{equation*}
	p_2(\alpha)=\sqrt{\frac{2}{\pi}}l^{\frac{3}{2}}\alpha^2 e^{-\frac{\alpha^2l}{2}}(1+o(1)),
	\end{equation*}
    which gives us density functions $p_1(k_0)$ and $p_2(\alpha)$ as in the statement of the Theorem \ref{planchfluct}.

    Thus, for every continuous bounded test function $h(\alpha,k_0)$, $\alpha\in\mathbb{R}_{>0}$, $k_0=0,1,\ldots,l-1$, as $N_1\to\infty$ we have
    \begin{eqnarray}\label{rsumplanch}
    \sum\limits_{k} p^{(l)}_N(k)h\left(\frac{k_1}{\sqrt{2N_1}},k_0\right)\to\sum\limits_{k_0,j}p_1(k_0)p_2(\alpha_j)\sqrt{\frac{2}{N_1}}h(\alpha_j,k_0)(1+o(1)).\nonumber
    \end{eqnarray}
    Here summation over $j$ corresponds to the summation over $k_1$ in the vicinity of the critical point $\xi_0=0$. Because $k_1$ changes in the increments of $2$ ($N\equiv k\,\mathrm{mod}\, 2$) and $k_1=\sqrt{2N_1}\alpha$, in the Riemann sum in the right hand side of (\ref{rsumplanch}) we have $\Delta k_1=\sqrt{\frac{N_1}{2}}\Delta\alpha$. Hence, we obtain
    \begin{eqnarray*}
    \sum\limits_{k=k_1l+k_0} p^{(l)}_N(k)h\left(\frac{k_1}{\sqrt{2N_1}},k_0\right)\to\sum\limits_{k_0=0}^{l-1}\int\limits_{-\infty}^{+\infty}p_1(k_0)p_2(\alpha)h(\alpha,k_0)d\alpha.
    \end{eqnarray*}
\end{proof}

\section{Proof of Theorem \ref{intermsc}}\label{proofintermsc}
\begin{proof}
	For $k_0\neq l-1$ we get
	\begin{eqnarray*}
		\frac{\mathrm{ch}_{T(k)}(e^t)}{(\mathrm{ch}_{T(1)}(e^t))^N}=\frac{e^{lbu}-e^{-lbu}}{u}\sqrt{2N_1}e^{-\frac{u^2l}{2}}2^{-2lN_1-N_0}\left(1+O\left(\frac{1}{N_1}\right)\right),
	\end{eqnarray*}
	and for $k_0=l-1$
	\begin{equation*}
	\frac{\mathrm{ch}_{T(lk_1+l-1)}(e^t)}{(\mathrm{ch}_{T(1)}(e^t))^N}=\frac{e^{lbu}-e^{-lbu}}{2u}\sqrt{2N_1}e^{-\frac{u^2l}{2}}2^{-2lN_1-N_0}\left(1+O\left(\frac{1}{N_1}\right)\right),
	\end{equation*}
	so, expanding (\ref{mass}) in the vicinity of the critical point, where $\xi=\frac{b}{\sqrt{2N_1}}$, for the considered pointwise asymptotic of the character measure we get a product of two densities
	\begin{equation*}
	p^{(l)}_N(b,k_0;u)=p_1(k_0)p_2(b;u)\sqrt{\frac{2}{N_1}}
	\end{equation*}
	and
	\begin{equation*}
	p_1(k_0)=\begin{cases}
	\frac{2(l-1-k_0)}{l^2},& \textit{if $k_0=0,\ldots,l-2$},\\
	\frac{1}{l},& \textit{if $k_0=l-1$}
	\end{cases},
	\end{equation*}
	and
	\begin{equation*}
	p_2(b;u)=\frac{b}{u}\sqrt{\frac{l}{2\pi}}(e^{lbu}-e^{-lbu})e^{-\frac{u^2l}{2}} e^{-\frac{b^2l}{2}}(1+o(1)),
	\end{equation*}
	which is a deformation of the density obtained in Theorem \ref{planchfluct}.

    Weak convergence follows from the same argument as in Appendix \ref{approofplanch}.
\end{proof}

\section{Proof of Theorems \ref{multdrift} and \ref{mesdrift}}\label{proofdrift}
\subsection{Proof of Theorem \ref{multdrift}}\label{poisproof1}
\begin{proof}
	Note that $N=k+2a$ for some $a$ finite nonnegative integer, so $N_0-k_0+l(2s+\gamma)=2a$ and is divisible by $2$. At most only $2s+1$ terms will appear in $M^{(l)}_{T(k)}(N)$, their asymptotic is given by
	\begin{eqnarray*}
		\frac{N!(k_1l+k_0+2jl+1)}{\big(\frac{N-k_1l-k_0-2jl}{2}\big)! \big(\frac{N+k_1l+k_0+2jl}{2}+1\big)!}=\frac{(2N_1l-l(2s+\gamma)+k_0+2jl+1)\prod_{m=1}^{N_0}(2lN_1+m)}{\big(\frac{N_0-k_0+l(2s+\gamma)-2jl}{2}\big)!(2lN_1+\frac{N_0+k_0-l(2s+\gamma)+2jl}{2}+1\big)}\times \\
  \times\frac{(2lN_1)!}{\big(2lN_1+\frac{N_0+k_0-l(2s+\gamma)+2jl}{2}\big)!}\sim\frac{(2N_1l)^{N_0}}{\big(\frac{N_0-k_0+l(2s+\gamma)-2jl}{2}\big)!}\frac{\sqrt{4\pi N_1l}}{\sqrt{2\pi\big(2lN_1+\frac{N_0+k_0-l(2s+\gamma)+2jl}{2}\big)}}\times\\
  \times e^{2lN_1 \mathrm{ln}(2lN_1)-2lN_1-\big(2lN_1+\frac{N_0+k_0-l(2s+\gamma)+2jl}{2}\big)\mathrm{ln}\big(2lN_1+\frac{N_0+k_0-l(2s+\gamma)+2jl}{2}\big)+\big(2lN_1+\frac{N_0+k_0-l(2s+\gamma)+2jl}{2}\big)}\sim\\
		\sim \frac{(2lN_1)^{\frac{N_0-k_0+l(2s+\gamma)-2jl}{2}}}{\big(\frac{N_0-k_0+l(2s+\gamma)-2jl}{2}\big)!}\sim \frac{N^{\frac{N-k-2jl}{2}}}{\big(\frac{N-k-2jl}{2}\big)!},
	\end{eqnarray*}
	where we used that $(x-\epsilon)\mathrm{ln}(x-\epsilon)=x\mathrm{ln}x-\epsilon \mathrm{ln}x-\epsilon$.
	Similarly
	\begin{eqnarray*}
		\frac{N!(-k_1l+k_0-2jl+1)}{\big(\frac{N+k_1l-k_0+2jl}{2}\big)! \big(\frac{N-k_1l+k_0-2jl}{2}+1\big)!}\sim-\frac{(2lN_1)^{\frac{N_0+k_0+l(2s+\gamma)-2jl}{2}+1}}{\big(\frac{N_0+k_0+l(2s+\gamma)-2jl}{2}+1\big)!}.
	\end{eqnarray*}
	We see that the very first term is dominant. For $k_0=l-1$ only one term appears, giving the same result, so we get
	\begin{equation*}
	M^{(l)}_{T(k)}(N)=\frac{N^\frac{N-k}{2}}{\big(\frac{N-k}{2}\big)!}\Big(1+o(1)\Big).
	\end{equation*}
\end{proof}

\subsection{Proof of Theorem \ref{mesdrift}}\label{poisproof2}
\begin{proof}
	In such regime for $k_0\neq l-1$ we have
	\begin{eqnarray*}
		\frac{\mathrm{ch}_{T(k)}(e^t)}{(\mathrm{ch}_{T(1)}(e^t))^N}\sim\frac{e^{t(k_1l+k_0-N)}(1+e^{-2t(k_0+1)})(1-e^{-2tk_1l})}{(1-e^{-2t})(1+e^{-2t})^N}\sim e^{-t(N-k)}e^{-\theta},
	\end{eqnarray*}
	and similarly for $k_0=l-1$, we have
	\begin{eqnarray*}
		\frac{\mathrm{ch}_{T(k)}(e^t)}{(\mathrm{ch}_{T(1)}(e^t))^N}\sim\frac{e^{t(k_1l+l-1-N)}(1-e^{-2t(k_1+1)l})}{(1-e^{-2t})(1+e^{-2t})^N}\sim e^{-t(N-(l-1))}e^{-\theta},
	\end{eqnarray*}
	so the expression is the same for both $k_0\neq l-1$ and $k_0= l-1$. Using Theorem \ref{multdrift}, we obtain
	\begin{equation*}
	p(s;\theta)=\frac{(Ne^{-2t})^{\frac{N-k}{2}}e^{-\theta}}{\big(\frac{N-k}{2}\big)!}=\frac{\theta^{s}e^{-\theta}}{s!}.
	\end{equation*}
\end{proof}


\begin{thebibliography}{6}  

\bibitem[And92]{And92} H.H. Andersen, Tensor products of quantized tilting modules,  {\it Comm. Math. Phys.} {\bf 149}, 149-159 
(1992) 

\bibitem[APW1]{APW1} H.H. Andersen, P. Polo. Wen Kexin, Representations of quantum algebras, {\it Invent. math.} {\bf 104}, 1-59 (1991) 

\bibitem[APW2]{APW2} H.H. Andersen, P. Polo. Wen Kexin, Injective modules for quantum algebras, {\it Amer. J. Math.}, {\bf 114}, no. 3, 571-604 (1992) 

\bibitem[AT]{AT} H.H. Andersen, D. Tubbenhauer, Diagram categories for $U_q$-tilting modules at roots of unity, J. Aust. Math. Soc 103 (2017), 1-44

\bibitem[AST]{AST} H.H. Andersen, C. Stroppel, D. Tubbenhauer, Cellular structures using $\mathbf{U}_q$-tilting modules, Pac. J. of Math 292, (2018) 21-59

\bibitem[And17]{And17} H.H. Andersen, Simple modules for Temperley-Lieb algebras and related algebras, J.Alg. 520 (2019), 276-308

\bibitem[ILZ]{ILZ} K. Iohara, G.I. Lehrer, R.B. Zhang, Temperley-Lieb at roots of unity, a fusion category and the Jones quotient, arxiv preprint, arxiv:1707.01196

\bibitem[Lus89]{Lus89} G. Lusztig, Modular representations and quantum groups, {\it Contemporary Mathematics}, {\bf 82}, (1989) 

\bibitem[Lus94]{Lus94} G. Lusztig, Introduction to quantum groups, Birkhauser, (1994). 


\bibitem[KR]{KR} A. Kirillov, N. Reshetikhin, {\it Representations of the algebra $U_q(sl_2)$, $q$-orthogonal polynomials and invariants of links}, Infinite Dimensional Lie Algebras and Groups, vol 7. Ed. by V.G. Kac. Ser. in Math. Phys., 1988.



\bibitem[Kratt]{Kratt} C. Krattenthaler
{\it Lattice path combinatorics} chapter in  {\it Handbook of Enumerative combinatorics} edited by Miklos Bona, Chapman and Hall, 2015

\bibitem[GZ]{GZ} Ira M. Gessel and Doron Zeilberger, {\it Random walks in a Weyl chamber}, Proc. Amer. Math. Soc.,
115(1):27–31, 1992.

\bibitem[PS]{PS} O. Postnova, D. Solovyev, {\it Counting filter restricted paths in $\mathbb{Z}^2$ lattice}, arXiv preprint, arXiv:2107.09774, 2021

\bibitem[Sol]{Sol} D. Solovyev, {\it Congruence for lattice path models with filter restrictions and long steps}, Mathematics, 10(22):4209, 2022.

\bibitem[F]{F} E. Feigin, {\it Large tensor products and Littlewood–Richardson coefficients}. — J. Lie Theory 29, No. 4 (2019), 927–940.

\bibitem[V]{VK} A. Vershik and S. Kerov, {\it Asymptotics of Plancherel measure of symmetrical group and limit form of young tables},
Doklady Akademii Nauk SSSR, {\bf 233}, 6, pp 1024--1027, 1977.

\bibitem[VK1]{VK1} A. Vershik and S. Kerov, {\it Asymptotic of the largest and the typical dimensions of irreducible representations of a symmetric group}, Functional analysis and its applications, 
{\bf 19}, 1, pp. 21--31, 1985

\bibitem[V]{V} A. Vershik (Ed.), Asymptotic Combinatorics with Applications to Mathematical Physics, A European Mathematical Summer School held at the Euler Institute, St. Petersburg, Russia, July 9-20, 2001, Lecture Notes in Mathematics, 1815.

\bibitem[LS]{LS} B. Logan, and L. Shepp,  {\it A variational problem for random Young tableaux}, Advances in mathematics,
{\bf 26}, 2, pp. 206--222, 1977.

\bibitem[K]{K} S. Kerov, {\it On asymptotic distribution of symmetry types of high rank tensors}, Zapiski Nauchnykh Seminarov POMI, {\bf 155}, pp. 181-186, 1986.

\bibitem[K1]{K1} S. Kerov, Asymptotic Representation Theory of the Symmetric Group and its Applications in Analysis, Translations of Mathematical Monographs, vol. 219, 2003.

\bibitem[B1]{B1} Ph. Biane, {\it Miniscual weights and random walks on lattices}, Quant. Prob. Rel. Topics, v. 7 (1992), 51-65.

\bibitem[B]{B} Ph. Biane, {\it Estimation asymptotique des multiplicités  dans les puissances tensorielles d'un $\mathfrak{g}$-module,}
C.R. Acad. Sci. Paris, t. 316, Serie I, p. 849-852, 1993.

\bibitem[TZ]{TZ} T.Tate, S. Zelditch, {\it Lattice path combinatorics and asymptotics of multiplicities of weights in tensor powers},  J. Funct. Anal. 217 (2004), no. 2, 402--447.  arXiv:math/0305251.

\bibitem[PR1]{PR1} O. Postnova, N. Reshetikhin, {\it On multiplicities of irreducibles in large tensor product of representations of simple Lie algebras}, Lett Math Phys 110, 147–178 (2020). https://doi.org/10.1007/s11005-019-01217-4

\bibitem[PR2]{PR2} O. Postnova, N. Reshetikhin, {\it On the asymptotics of multiplicities for large tensor product of representations of simple Lie algebras}, Zapiski Nauchnykh Seminarov POMI, {\bf 509}, pp. 182-200, 2021

\bibitem[CEO]{CEO} K. Coulembier, P. Etingof, V. Ostrik {\it Asymptotic properties of tensor powers in symmetric tensor categories}, arXiv preprint, arXiv:2301.09804, 2023


\end{thebibliography}
\end{document}